\colorlet{MyBlue}{DodgerBlue!75!Black}
\colorlet{MyGreen}{DarkGreen!85!Black}
\newcommand{\EMAIL}[1]{\email{\href{mailto:#1}{#1}}}
\numberwithin{equation}{section}  
\newcommand{\itemref}[1]{\textup{(\ref{#1})}}
\newcommand{\debug}[1]{#1}		
\newcommand{\revise}[1]{#1}		
\newcommand{\typo}[1]{#1}							
\theoremstyle{plain}
\newtheorem{theorem}{Theorem}
\newtheorem*{corollary*}{Corollary}
\newtheorem{lemma}[theorem]{Lemma}
\theoremstyle{definition}
\newtheorem{definition}[theorem]{Definition}
\newtheorem*{definition*}{Definition}
\newtheorem{assumption}{Assumption}
\theoremstyle{remark}
\newtheorem{remark}{Remark}
\newtheorem*{remark*}{Remark}
\newtheorem*{notation*}{Notational remark}
\newtheorem{example}{Example}
\renewcommand{\qed}{\hfill{\footnotesize$\blacksquare$}}
\newcounter{proofstep}
\numberwithin{theorem}{section}
\numberwithin{example}{section}
\DeclarePairedDelimiter{\bracks}{[}{]}		
\DeclarePairedDelimiter{\parens}{(}{)}		
\DeclarePairedDelimiter{\abs}{\lvert}{\rvert}		
\DeclarePairedDelimiterX{\setdef}[2]{\{}{\}}{#1:#2}		
\DeclarePairedDelimiterXPP{\exclude}[1]{\mathopen{}\setminus}{\{}{\}}{}{#1}
\newcommand{\cf}{cf.\xspace}		
\newcommand{\eg}{e.g.,\xspace}		
\newcommand{\ie}{i.e.,\xspace}		
\newcommand{\textpar}[1]{\textup(#1\textup)}		
\newcommand{\txs}{\textstyle}		
\newcommand{\N}{\mathbb{N}}		
\newcommand{\R}{\mathbb{R}}		
\DeclareMathOperator{\bigoh}{\mathcal O}		
\DeclareMathOperator{\grad}{\nabla}		
\DeclareMathOperator{\Hess}{\debug \nabla^{2}}		
\newcommand{\eps}{\varepsilon}		
\newcommand{\pd}{\partial}		
\newcommand{\vecspace}{\mathcal{\debug V}}		
\newcommand{\bvec}{\debug e}		
\DeclarePairedDelimiterX{\braket}[2]{\langle}{\rangle}{#1,#2}		
\DeclareMathOperator{\diag}{diag}		
\DeclareMathOperator{\image}{im}
\DeclareMathOperator{\relint}{ri}		
\newcommand{\tvec}{\debug z}		
\DeclareMathOperator*{\argmin}{arg\,min}		
\newcommand{\points}{\mathcal{\debug X}}		
\newcommand{\point}{\debug x}		
\newcommand{\pointalt}{\point'}		
\newcommand{\obj}{\debug f}		
\newcommand{\vecfield}{\debug v}		
\newcommand{\sol}[1][\point]{#1^{\ast}}		
\newcommand{\sols}{\sol[\points]}		
\newcommand{\cost}{\debug c}		
\newcommand{\hreg}{\debug h}		
\newcommand{\breg}{\debug D}		
\DeclareMathOperator{\ex}{\mathbb{E}}		
\DeclareMathOperator{\prob}{\mathbb{P}}		
\DeclarePairedDelimiterXPP{\exof}[1]{\ex}{[}{]}{}{
 #1}
\DeclarePairedDelimiterXPP{\probof}[1]{\prob}{(}{)}{}{
 #1}
\newcommand{\start}{\debug 0}		
\newcommand{\running}{\debug 0,1,\dotsc}		
\newcommand{\run}{\debug k}		
\newcommand{\runalt}{\debug r}		
\newcommand{\new}[1]{#1^{+}}		
\newcommand{\step}{\debug \alpha}		
\newcommand{\from}{\colon}		
\newcommand{\graph}{\mathcal{\debug G}}
\newcommand{\vertices}{\mathcal{\debug V}}
\newcommand{\edges}{\mathcal{\debug E}}
\DeclareMathOperator{\bd}{bd}		
\DeclareMathOperator{\dist}{dist}		
\newcommand{\open}{\mathcal{\debug U}}		
\DeclarePairedDelimiterX{\product}[2]{\langle}{\rangle}{#1,#2}		
\DeclarePairedDelimiter{\norm}{\lVert}{\rVert}		
\DeclarePairedDelimiterXPP{\dnorm}[1]{}{\lVert}{\rVert}{_{\ast}}{#1}		
\newcommand{\gmat}{\debug H}		
\newcommand{\Id}{\debug I}
\newcommand{\edge}{e}
\newcommand{\source}{o}
\newcommand{\sink}{d}
\newcommand{\pair}{i}
\newcommand{\nPairs}{N}
\newcommand{\pairs}{\mathcal{N}}
\newcommand{\rate}{m}
\newcommand{\flow}{x}
\newcommand{\flows}{\mathcal{X}}
\newcommand{\load}{w}
\newcommand{\Cost}{C}
\newcommand{\nRoutes}{P}
\newcommand{\routes}{\mathcal{\nRoutes}}
\newcommand{\route}{p}
\newcommand{\Rn}{\R^{\nDims}}
\newcommand{\Rnp}{\R_{+}^{\nDims}}
\newcommand{\scrA}{\mathcal{A}}
\newcommand{\scrB}{\mathcal{B}}
\newcommand{\scrK}{\mathcal{K}}
\newcommand{\scrL}{\mathcal{L}}
\newcommand{\scrP}{\mathcal{P}}
\newcommand{\scrS}{\mathcal{S}}
\newcommand{\clorthant}{\mathcal{\debug C}}
\newcommand{\orthant}{\revise{\relint(\clorthant)}}
\newcommand{\intpoints}{\revise{\relint(\points)}}
\newcommand{\nDims}{\debug n}
\newcommand{\nConsts}{\debug m}
\newcommand{\cmat}{\debug A}		
\newcommand{\cvec}{\debug b}		
\newcommand{\projmat}{\debug P}		
\newcommand{\hmat}{\debug H}		
\newcommand{\test}{\debug\phi}
\newcommand{\dsol}{\debug y^{\ast}}
\newcommand{\pexp}{\debug p}
\newcommand{\armijo}{\debug \mu}
\newcommand{\shrink}{\debug \delta}
\begin{document}


\newcommand{\acli}[1]{\textit{\acl{#1}}}
\newcommand{\acdef}[1]{\textit{\acl{#1}} \textup{(\acs{#1})}\acused{#1}}
\newcommand{\acdefp}[1]{\emph{\aclp{#1}} \textup(\acsp{#1}\textup)\acused{#1}}

\newacro{MGF}{metric generating function}
\newacro{DGF}{distance generating function}
\newacro{lsc}[l.s.c.]{lower semi-continuous}
\newacro{KKT}{Karush-Kuhn-Tucker}
\newacro{RN}{regularized Newton}
\newacro{AS}{affine scaling}
\newacro{RD}{replicator dynamics}
\newacro{LV}{Lotka-Volterra}
\newacro{ODE}{ordinary differential equation}
\newacro{HR}{Hessian Riemannian}
\newacro{HRGD}{Hessian Riemannian gradient descent}
\newacro{OD}[O/D]{origin-destination}
\newacro{TAP}{traffic assignment problem}
\newacro{MD}{mirror descent}
\newacro{HBA}{Hessian barrier algorithm}
\newacroplural{NE}[NE]{Nash equilibria}
\newacro{VI}{variational inequality}
\newacroplural{VI}[VIs]{variational inequalities}
\newacro{iid}[i.i.d.]{independent and identically distributed}

\title
[Hessian barrier algorithms]
{Hessian barrier algorithms for\\
linearly constrained optimization problems\footnotemark[2]}

\author[I.M.~Bomze]{Immanuel M. Bomze$^{\sharp}$}		
\address{$^{\sharp}$\,%
Universit\"{a}t Wien, ISOR/VCOR \& DS:UniVie, Vienna, Austria}		
\EMAIL{immanuel.bomze@univie.ac.at}		
\author[P.~Mertikopoulos]{Panayotis Mertikopoulos$^{\star}$}		
\address{$^{\star}$\,%
Univ. Grenoble Alpes, CNRS, Inria, Grenoble INP, LIG, Grenoble, France.}		
\EMAIL{panayotis.mertikopoulos@imag.fr}
\author[W.~Schachinger]{\\Werner Schachinger$^{\sharp}$}		
\EMAIL{werner.schachinger@univie.ac.at}		
\author[M.~Staudigl]{Mathias Staudigl$^{\diamond}$}		
\address{$^{\diamond}$\,%
Maastricht University, Department of Quantitative Economics, P.O. Box 616, NL\textendash 6200 MD Maastricht, The Netherlands.}		
\EMAIL{m.staudigl@maastrichtuniversity.nl}		

\subjclass[2010]{%
Primary: 90C51, 90C30;
secondary: 90C25, 90C26.}

\keywords{%
\acl{HRGD};
interior-point methods;
\acl{MD};
non-convex optimization;
traffic assignment.
\vspace{1ex}}

\maketitle
\begin{abstract}
%
%
In this paper, we propose an interior-point method for linearly constrained \textendash\ and possibly nonconvex \textendash\ optimization problems.
The proposed method \textendash\ which we call the \acdef{HBA} \textendash\ combines a forward Euler discretization of \acl{HR} gradient flows with an Armijo backtracking step-size policy.
In this way, \ac{HBA} can be seen as an \revise{alternative} to \acf{MD}, and contains as special cases the \acl{AS} algorithm, \acl{RN} processes, and several other iterative solution methods.
Our main result is that, modulo a non-degeneracy condition, the algorithm converges to the problem's critical set;
hence, in the convex case, the algorithm converges globally to the problem's minimum set.
In the case of linearly constrained quadratic programs (not necessarily convex), we also show that the method's convergence rate is $\bigoh(1/k^{\rho})$ for some $\rho\in(0,1]$ that depends only on the choice of kernel function (\ie not on the problem's primitives).
These theoretical results are validated by numerical experiments in standard non-convex test functions and large-scale \aclp{TAP}.
\end{abstract}

\acresetall
\allowdisplaybreaks

\section{Introduction}
\label{sec:introduction}

Consider a linearly constrained optimization problem of the form
\begin{equation}
\label{eq:Opt}
\tag{Opt}
\begin{aligned}
\textrm{minimize}
	&\quad
	\obj(\point)
	\\
\textrm{subject to}
	&\quad
	\cmat\point = \cvec,\;
			\point\geq0.
\end{aligned}
\end{equation}
In this formulation, the primitives of \eqref{eq:Opt} are:%
\footnote{Inequality constraints of the form $\cmat\point\leq \cvec$ can also be accommodated in \eqref{eq:Opt} by introducing the corresponding slack variables $s = \cvec - \cmat\point \geq 0$.
Despite the slight loss in parsimony, the equality form of \eqref{eq:Opt} turns out to be more convenient in terms of notational overhead, so we stick with the standard equality formulation throughout.}
\begin{enumerate}
[\indent\itshape i\hspace*{.5pt}\upshape)]
\addtolength{\itemsep}{\smallskipamount}
\item
The problem's \emph{objective function} $\obj\from\clorthant\to\R\cup\{+\infty\}$, where $\clorthant \equiv \Rnp$ denotes the non-negative orthant of $\Rn$.
\item
The problem's \revise{\emph{feasible region}}
\begin{equation}
\label{eq:feas}
\revise{%
\points
	= \setdef{\point\in\Rn}{\cmat\point=\cvec, \point\geq0}
	}
\end{equation}
where $\cmat\in\R^{\nConsts\times\nDims}$ is a matrix of rank $\nConsts\geq0$ and $\cvec\in\R^{\nConsts}$ is an $\nConsts$-dimensional real vector (both assumed known to the optimizer).
\end{enumerate}
\smallskip

Problems of this type are ubiquitous:
they arise naturally in
data science and machine learning \cite{Bub15,HaeLiuYe18},
game theory and operations research \cite{Bom02,NRTV07},
imaging science and signal processing \cite{BBDV09,BT09,MBNS17},
information theory and statistics \cite{CanTao07,CanTau05},
networks \cite{BG92},
traffic engineering \cite{Fuk84},
and many other fields where continuous optimization plays a major role.
In addition, \eqref{eq:Opt} also covers continuous relaxations of NP-hard discrete optimization problems ranging from the maximum clique problem to integer linear programming \cite{Bom97,BomSchUll17}.
As such, it should come as no surprise that \eqref{eq:Opt} has given rise to a thriving literature on iterative algorithmic methods aiming to reach an approximate solution in a reasonable amount of time.

Even though it is not possible to adequately review this literature here, we should point out that it includes methods as diverse as
quasi-Newton algorithms,
conditional gradient descent (Frank-Wolfe),
interior-point and active-set methods,
and
Bregman proximal/\acl{MD} schemes.
In particular, one very fruitful strategy for solving \eqref{eq:Opt} is to take a continuous-time viewpoint and design \acp{ODE} whose solution trajectories are ``negatively correlated'' with the gradient of $\obj$
\textendash\ see \eg \cite{AGR00,BT03,CEG09,ABS13,SBC14,AP16,Wib16,MerSta18} and references therein.
Doing so sheds new light on the properties of many algorithms proposed to solve \eqref{eq:Opt}, it provides Lyapunov functions to analyze their asymptotic behavior, and often leads to new classes of algorithms altogether.

A classical example of this heuristic arises in the study of dynamical systems derived from a \acdef{HR} metric, \ie a Riemannian metric induced by the Hessian of a Legendre-type function \cite{Dui01,BT03,ABB04,ABRT04}.
To make this more precise (see \cref{sec:Riemannian} for the details), the \acdef{HRGD} dynamics for \eqref{eq:Opt} can be stated as
\begin{equation}
\label{eq:HRGD}
\tag{HRGD}
\dot x
	= - \projmat(\point) \hmat(\point)^{-1} \nabla\obj(\point),
\end{equation}
where:
\begin{enumerate}
\addtolength{\itemsep}{\medskipamount}
\item
$\hmat(\point) = \Hess \hreg(\point)$
for some convex \emph{barrier function} $\hreg\from\clorthant\to\R\cup\{+\infty\}$ that satisfies a \emph{steepness} (or \emph{essential smoothness}) condition of the form
\begin{equation}
\label{eq:steep}
\lim_{k\to\infty} \norm{\nabla \hreg(\point^{\run})}_{2}
	= \infty
\end{equation}
for every sequence of interior points $\point^{\run}\in\orthant$ converging to the boundary $\bd(\clorthant)$ of $\clorthant$.
\item
$\projmat(\point)$ is the (Riemannian) projection map for the null space $\scrA_{0} = \ker\cmat \equiv \setdef{\point\in\Rn}{\cmat\point=0}$ of $\cmat$;
concretely, $\projmat(\point)$ has the closed-form expression
\begin{equation}
\label{eq:project}
\projmat(\point)
	= \Id - \hmat(\point)^{-1} \cmat^{\top}(\cmat\hmat(\point)^{-1}\cmat^{\top})^{-1}\cmat.
\end{equation}
\end{enumerate}

The intuition behind \eqref{eq:HRGD} is as simple as it is elegant:
to derive an interior-point method for \eqref{eq:Opt}, the positive orthant $\orthant$ is endowed with a Riemannian geometric structure that ``blows up'' near its boundary (\ie distances between points increase near the boundary).
In so doing, the (unrestricted) Riemannian gradient $\grad \obj(\point) = \hmat(\point)^{-1} \nabla\obj(\point)$ of $\obj$ becomes vanishingly small near the boundary of $\orthant$.
The projection map $\projmat(\point)$ further guarantees that the dynamics evolve in the affine hull $\scrA \equiv \setdef{\point\in\Rn}{\cmat\point=\cvec}$ of $\points$ (assumed throughout to be nonempty);
as a result, the solution trajectories of \eqref{eq:HRGD} \typo{starting in the relative interior $\intpoints$ of $\points$ remain in $\intpoints$ for all $t\geq0$ \cite{ABB04}.}

If the objective function $\obj$ is convex, \eqref{eq:HRGD} enjoys very robust convergence guarantees \cite{ABB04,ABRT04}, and many recent developments in acceleration techniques can also be traced back to this basic scheme \textendash\ \eg see \cite{Wib16} and references therein.
However, harvesting the full algorithmic potential of \eqref{eq:HRGD} also requires a suitable discretization of the dynamics in order to obtain a bona fide, implementable algorithm.
In \cite{AT04}, this was done \revise{by a discretization scheme} that ultimately gives rise to the \acdef{MD} update rule
\begin{equation}
\label{eq:MD}
\tag{MD}
\new\point
	= \argmin_{\pointalt\in\points} \{\step \nabla\obj(\point)^{\top} (\pointalt - \point) + \breg(\pointalt,\point)\},
\end{equation}
where
$\new\point\in\points$ denotes the algorithm's new state starting from $\point\in\points$,
$\step$ is the method's step-size,
and
$\breg$ denotes the \emph{Bregman divergence} of $\hreg$, \ie
\begin{flalign}
\label{eq:Bregman}
\breg(\pointalt,\point)
	= \hreg(\pointalt) - \hreg(\point) - \nabla \hreg(\point)^{\top}(\pointalt - \point).
\end{flalign}

First introduced by Nemirovski and Yudin \cite{NY83} for non-smooth problems, the \acl{MD} algorithm and its variants have met with prolific success in convex programming \cite{BecTeb03}, online and stochastic optimization \cite{SS11}, \aclp{VI} \cite{Nes09}, non-cooperative games \cite{BLM18,MZ19}, and many other fields of optimization theory and its applications.
Nevertheless, despite the appealing convergence properties of \eqref{eq:MD}, it is often difficult to calculate the \revise{update step} from $\point$ to $\new\point$ when the problem's feasible region $\points$ is not ``prox-friendly'' \textendash\
\ie when there is no efficient oracle for solving the convex optimization problem in \eqref{eq:MD} \cite{CoxJudNem14}.
With this in mind, our main goal in this paper is to provide a convergent, \revise{\emph{forward}} discretization of \eqref{eq:HRGD} which does not require solving a convex optimization problem at each update step.

\subsection*{Our contributions and prior work}

Our starting point is to consider an Euler discretization of \eqref{eq:HRGD} which we call the \acdef{HBA}, and which can be described by the update rule
\begin{equation}
\label{eq:HBA-intro}
\tag{HBA}
\new\point
	= \point - \step \projmat(\point) \hmat(\point)^{-1} \nabla \obj(\point).
\end{equation}
In the above, $\hmat(\point)$ and $\projmat(\point)$ are defined as in \eqref{eq:HRGD}, while the algorithm's step-size $\step \equiv \step(\point)$ is determined via an Armijo backtracking rule that we describe in detail in \cref{sec:algorithm}.
Before discussing our general results, we provide below a small sample of classical first-order schemes which can be seen as direct antecedents of \ac{HBA}:
\smallskip

\begin{example}[\acl{LV} systems]
\label{ex:LV}
Let $\nConsts=0$, so the feasible region of \eqref{eq:Opt} is the non-negative orthant $\clorthant=\Rnp$ of $\Rn$.
\typo{
If we set
\begin{equation}
\label{eq:Tsallis}
\theta(t)
	= \begin{cases}
	t\log t
		&\quad
		\text{for $\pexp=1$}
		\\
	\frac{1}{(2-\pexp)(1-\pexp)} t^{2-\pexp}
		&\quad
		\text{for $\pexp\in(1,2)$}
		\\
	-\log t
		&\quad
		\text{for $\pexp=2$}
	\end{cases}
\end{equation}
and $\hreg(\point) = \sum_{i=1}^{\nDims} \theta(\point_{i})$,
}
some straightforward algebra gives the \acl{LV} rule
\begin{equation}
\label{eq:LV}
\tag{LV}
\new\point_{i}
	= \point_{i} - \step \point_{i}^{\pexp} \pd_{i}\obj(\point),
\end{equation}
where we write $\pd_{i}\obj(\point)$ for the $i$-th partial derivative of $\obj$ at $\point$ (for simplicity, we are also dropping the dependence of $\step(\point)$ on $\point$).
For the convergence analysis of a special case of this system (modulo a regularization term), see \cite{AT04} and references therein.
\end{example}
\smallskip

\begin{example}[The \acl{RD}]
\label{ex:RD}
Let $A = (1,\dotsc,1) \in \R^{1\times\nDims}$ and $b=1$, so the feasible region of \eqref{eq:Opt} is the unit simplex $\points = \setdef{\point\in\Rnp}{\sum_{i} \point_{i} = 1}$.
If we take the negative entropy function $\hreg(\point) = \sum_{i=1}^{\nDims} \point_{i} \log \point_{i}$ stemming from the choice $\pexp=1$ above, a direct calculation yields \revise{$\hmat(\point) = \diag(1/\point_{1},\dotsc,1/\point_{\nDims})$} and \revise{$\projmat(\point) = \Id - \point\cdot(1,\dotsc,1)$}.
The induced \acl{HR} system is known as the \acdef{RD} and the corresponding incarnation of \eqref{eq:HBA-intro} takes the form
\begin{equation}
\label{eq:RD}
\tag{RD}
\txs
\new\point_{i}
	= \point_{i} - \step \point_{i} \bracks*{\pd_{\revise{i}}\obj(\point) - \sum\nolimits_{j=1}^{\nDims} \point_{j} \pd_{j}\obj(\point)}.
\end{equation}
The continuous-time version of \eqref{eq:RD} has a long history in evolutionary game theory \cite{HS03} and it has been successfully applied to a wide range of relaxations of NP-hard optimization problems \cite{Bom97a,BomSchUll17}.
\end{example}
\smallskip

\begin{example}[Affine scaling]
\label{ex:affine}
Suppose that $\obj(\point) = \cost^{\top}x$ for some cost vector $\cost\in\Rn$.
Then, defining $\hreg(\point)$ as in \cref{ex:LV}, we obtain the \acdef{AS} scheme
\begin{equation}
\label{eq:AS}
\tag{AS}
\new\point
	= \point - \step \bracks{\Id - X^{\pexp} \cmat^{\top} (\cmat X^{\pexp}\cmat^{\top})^{-1}\cmat} X^{\pexp} \cost
\end{equation}
where $X = \diag(\point_{1},\dotsc,\point_{\nDims})$.
The origins of \eqref{eq:AS} can be traced back to the work of Dikin in the 1960's and Karmarkar in the 1980's;
the convergence of the specific incarnation \eqref{eq:AS} was established in the seminal paper of Vanderbei et al. \cite{VanMekFre86}.
\end{example}
\smallskip

\begin{example}
[Regularized Newton methods]
Suppose that $\nConsts=0$ (so there are no equality constraints), and $\obj$ is convex and twice continuously differentiable.
Setting $\hreg(\point) = \obj(\point) + \frac{1}{2}\beta\norm{\point}_{2}^{2}$, we get $\hmat(\point) = \beta\Id + \Hess\obj(\point)$, leading in turn to the \acdef{RN} update rule
\begin{equation}
\label{eq:RN}
\tag{RN}
\new\point
	= \point - \step \bracks{\beta\Id + \Hess\obj(\point)}^{-1} \nabla\obj(\point)
\end{equation}
If $\obj$ is \emph{self-concordant} \cite{NN94}, the barrier function $\hreg(\point)$ satisfies the steepness requirement \eqref{eq:steep}, so \eqref{eq:RN} can be seen as a special case of \eqref{eq:HBA-intro}.
The convergence of this method was studied in detail in a recent paper by R.~A.~Polyak \cite{Pol09}.
\end{example}
\smallskip

The examples above show that \eqref{eq:HBA-intro} is a flexible method that covers several existing algorithms as special cases, and which can be easily tuned to the specifics of the problem at hand.
To analyze its asymptotic behavior, we introduce an Armijo backtracking procedure which guarantees ``sufficient decrease'' of the value of $\obj$ at each stage.
In so doing, we are able to show that the sequence $\point^{\run}$, $\run=\running$, of the algorithm's generated iterates converges to the set of \ac{KKT} points of \eqref{eq:Opt} under mild regularity assumptions on $\obj$ and a full row-rank assumption of the constraint matrix $\cmat$ (\cf \cref{thm:main}).
As an immediate corollary of this, we show that every limit point of \eqref{eq:HBA-intro} is a global minimum of $\obj$ if the objective function of \eqref{eq:Opt} is convex.
This global convergence result closes a significant open issue in the asymptotic analysis of Tseng et al.
\cite{TseBomSch11} for Armijo methods, where convergence of a replicator-type system is proved modulo a ``non-vanishing'' step-size hypothesis which cannot be verified directly from the problem's primitives.
As we show here, this step-size assumption is by no means harmless, and requires a delicate argument to establish.

In the special case where $\obj$ is quadratic (but otherwise possibly non-convex), we further show that $\obj(\point^{\run})$ converges at a sublinear rate of $\bigoh(1/k^{\rho})$ for some $\rho\in(0,1]$ depending only on the choice of the method's barrier function.
This shows that the chosen barrier function is a key design parameter for the convergence properties of \eqref{eq:HBA-intro};
we discuss this issue in detail in \cref{sec:rates}.

Finally, in \cref{sec:numerics}, we supplement our theoretical analysis by means of extensive numerical experiments with standard global optimization test functions (such as the Rosenbrock and Beale benchmarks), and we examine the method's observed convergence rate in a large-scale \aclp{TAP}.

\subsection*{Notation}

For all $\point\in\R^{\nDims}$, we will write $\diag(\point) \equiv \diag(\point_{1},\dotsc,\point_{\nDims})$ for the diagonal $\nDims\times\nDims$ matrix with the coordinates of $\point$ on the main diagonal. We set $S = \{1,2,\dotsc,\nDims\}$, and write $S_{\point}=\setdef{i\in S}{\point_{i} \neq 0}$ for the support of the vector $\point\in\Rn$.
For $\point\in\Rn$ and $J\subset S$, we let $\point_{J}=(\point_{j})_{j\in J}$ denote the restriction of $\point$ to the coordinates in the index set $J$.
Finally, we will write $\scrS^{\nDims}$, $\scrS_{+}^{\nDims}$ and $\scrS_{++}^{\nDims}$ for the space of real $\nDims\times\nDims$ symmetric, positive-semidefinite and positive-definite matrices respectively.

\section{Problem setup and preliminaries}
\label{sec:prelims}

\subsection{Definitions and assumptions}
\label{sec:blanket}

Throughout what follows, we will make the following blanket assumptions for \eqref{eq:Opt}:

\begin{assumption}
The objective $\obj\from\clorthant\to\R\cup\{+\infty\}$ of \eqref{eq:Opt} satisfies the following:
\label{asm:basic}
\begin{enumerate}
[\indent(\itshape a\hspace*{.5pt}\upshape)]
\addtolength{\itemsep}{\smallskipamount}
\item
\label{itm:smooth}
$\obj$ is proper and \ac{lsc} on $\clorthant$, continuously differentiable on $\points$, and $\nabla\obj$ is $L$-Lipschitz continuous on $\points$.
\item
\label{itm:bounded}
There exists some $\point^{\start}\in\intpoints$ such that the sublevel set $[\obj\leq \obj(\point^{\start})] \equiv \setdef{\point\in\points}{\obj(\point) \leq \obj(\point^{\start})}$ is bounded.
\end{enumerate}
\end{assumption}

\cref{asm:basic}\itemref{itm:bounded} is trivial when $\points$ is itself bounded;
moreover, taken together, Assumptions \ref{asm:basic}\itemref{itm:smooth} and \ref{asm:basic}\itemref{itm:bounded} imply that
\typo{the sublevel set $[\obj\leq \obj(\point^{\start})]$ is compact, so $\obj$ attains its minimum therein.}
These assumptions are quite standard in interior-point methods and, in particular, \acl{AS} schemes;
for an in-depth discussion, see \cite{GlHert93} and references therein.

To formulate the first-order optimality conditions for \eqref{eq:Opt}, consider the Lagrangian
\begin{flalign}
\label{eq:Lagrangian}
\typo{\mathcal{L}(\point,y,u)}
	= \obj(\point) - y^{\top}(\cmat\point - \cvec) - u^{\top}\point
\end{flalign}
where $y\in\R^{\nConsts}$ and $u\in\R^{\nDims}$ are the Lagrange multipliers corresponding respectively to the problem's equality and inequality constraints.
The \acf{KKT} conditions for \eqref{eq:Opt} may then be written as
\begin{equation}
\label{eq:KKT}
\tag{KKT}
\begin{aligned}
\nabla\obj(\point)
	&= \cmat^{\top} y + u
	\\
\cmat\point
	&= \cvec
	\\
u_{i} \point_{i} &= 0,\;
	u_{i} \geq0\;
	\text{for all $i=1,\dotsc,\nDims$}
\end{aligned}
\end{equation}
The set of all points $\sol\in\points$ for which the system \eqref{eq:KKT} admits a solution \typo{$(y,u)$} will be denoted in what follows by $\sols$.
As all constraints are linear, we do not need any constraint qualifications, and all local minima of $\obj$ also lie in $\sols$ by default.

Since the existence of a minimizer is guaranteed by \cref{asm:basic}, it follows that $\sols$ is nonempty.
Note also that, if $\sol\in\sols$, \typo{then} there exists some $\dsol \in\R^{m}$ such that
\begin{subequations}
\begin{flalign}
\nabla \obj(\sol) - \cmat^{\top}\dsol
	&\geq 0,
\label{eq:KKT1}
	\\
\diag(\sol) \parens{\nabla \obj(\sol) - \cmat^{\top}\dsol}
	&=0,
\label{eq:KKT2}
\end{flalign}
\end{subequations}
\typo{and vice versa.}

\subsection{Elements of Riemannian geometry}
\label{sec:Riemannian}

A key notion in our considerations is that of a \emph{Riemannian metric}, \ie a position-dependent variant of the ordinary (Euclidean) scalar product between vectors.
To define it, recall first that a \emph{scalar product} on $\R^{\nDims}$ is a symmetric, positive-definite bilinear form $\product{\cdot}{\cdot}\from \R^{\nDims}\times \R^{\nDims}\to \R$.%
\footnote{For a masterful introduction to Riemannian geometry, we refer the reader to \cite{Lee03}.}
This product defines a norm in the usual way and it can be represented equivalently via its \emph{metric tensor}, that is, a symmetric, positive-definite matrix $\gmat\in\scrS_{++}^{\nDims}$ with components
\begin{equation}
\label{eq:metric-components}
\gmat_{ij}
	= \product{\bvec_{i}}{\bvec_{j}}
\end{equation}
\typo{in the standard basis} $\{\bvec_i\}_{i=1}^{\nDims}$ of $\R^{\nDims}$.
A \emph{Riemannian metric} on a nonempty open set $\open\subseteq\R^{\nDims}$ is then defined to be a smooth assignment of scalar products $\product{\cdot}{\cdot}_{\point}$ to each $\point\in\open$ \textendash\ or, equivalently, a smooth field $\gmat(\point)$ of symmetric positive-definite matrices on $\open$.

Given a Riemannian metric on $\open$, the \emph{Riemannian gradient} of a smooth function $\test\from\open\to\R$ at $\point\in\open$ is defined via the characterization
\begin{equation}
\label{eq:grad-char}
\braket{\grad\test(\point)}{\tvec}_{\point}
	= \test'(\point;\tvec)
	\quad
	\text{for all $\tvec\in\R^{\nDims}$},
\end{equation}
where $\test'(\point;\tvec) = \left.\frac{d}{dt}\right\vert_{t=0^{+}} \test(\point+t\tvec)$ denotes the directional derivative of $\test$ at $\point$ along $\tvec$.
More concretely, by expressing everything in components, it is easy to see that
$\grad\test(\point)$ is given by the explicit expression
\begin{equation}
\label{eq:grad-def}
\grad\test(\point)
	= \hmat(\point)^{-1} \nabla\test(\point).
\end{equation}

Bringing the above closer to our setting, let $\vecspace_{0}\subseteq\R^{\nDims}$ be a subspace of $\R^{n}$ and let $\vecspace$ be an affine translate of $\vecspace_{0}$ such that $\open_{0} \equiv \open\cap\vecspace$ is nonempty.
Then, viewing $\open_{0}$ as an open subset of $\vecspace$, the \emph{gradient of $\test$ restricted to $\open_{0}$} is defined as the unique vector $\grad_{\open_{0}} \test(\point) \equiv \grad \test\vert_{\open_{0}}(\point) \in \vecspace_{0}$ such that
\begin{equation}
\label{eq:grad-res}
\typo{\product{\grad_{\open_{0}}\test(\point)}{\tvec}_{\point}}
	= \test'(\point;\tvec)
	\quad
	\text{for all $\tvec\in\vecspace_{0}$}.
\end{equation}
Hence, specializing all this to the problem at hand, let $\gmat(\point)$ be a Riemannian metric on the open orthant $\orthant=\R_{++}^{\nDims}$ of $\R^{\nDims}$ and set
\begin{equation}
\label{eq:hulls}
\begin{aligned}
\scrA_{0}
	&= \ker \cmat
	= \setdef{\point\in\R^{\nDims}}{\cmat\point=0},
	\\
\scrA
	&= \setdef{\point\in\R^{\nDims}}{\cmat\point = \cvec},
\end{aligned}
\end{equation}
as in \cref{sec:introduction}.
Then, a straightforward exercise in matrix algebra shows that the gradient of $\obj$ restricted to $\intpoints = \orthant\cap\scrA$ can be written in closed form as
\begin{equation}
\label{eq:grad-res-coords}
\grad_{\intpoints} \obj(\point)
	= \projmat(\point) \gmat(\point)^{-1} \nabla\obj(\point)
\end{equation}
with $\projmat(\point)$ defined as in \eqref{eq:project}, \ie $\projmat(\point) = \Id - \gmat(\point)^{-1}\cmat^{\top}(\cmat\gmat(\point)^{-1}\cmat^{\top})^{-1}\cmat$.

To streamline notation for later, we will denote the negative (restricted) gradient of $\obj$ at $\point\in\intpoints$ as
\begin{equation}
\label{eq:v}
\vecfield(\point)
	= -\grad_{\intpoints} \obj(\point)
	= - \projmat(\point) \hmat(\point)^{-1} \nabla\obj(\point).
\end{equation}
Defined this way, $\vecfield(\point)$ corresponds to the direction of steepest descent of $\obj$ along $\points$ relative to the metric $\hmat(\point)$.
In particular, since $\vecfield(\point)\in\scrA_{0}$ for all $\point\in C$, it follows that
\begin{equation}
\label{eq:angle}
-\nabla \obj(\point)^{\top}\vecfield(\point)
	= \norm{\vecfield(\point)}_{\point}^{2},
\end{equation}
where, in obvious notation, we let $\norm{\tvec}_{\point}^{2} = \product{\tvec}{\tvec}_{\point}$ for all $\tvec\in\scrA_{0}$.

\subsection{\acl{HR} metrics}
\label{sec:HR}

A very important class of Riemannian metrics (and the main focus of our paper) can be generated by taking the Hessian of a smooth convex function.
More precisely:

\begin{definition}
\label{def:HR}
We say that \typo{$\hreg\from\clorthant\to\R\cup\{+\infty\}$} is a \emph{barrier} \textpar{or \emph{metric generating}} \emph{function} if
\begin{enumerate}
\item
$\hreg$ is twice continuously differentiable on $\orthant$.
\item
The Hessian $\Hess \hreg$ of $\hreg$ is locally Lipschitz continuous and positive-definite on $\orthant$.
\item
$\norm{\pd_{i}\hreg(\point^{\run})}_{2} \to \infty$ for every sequence of interior points $\point^{\run}\in\orthant$ converging to the boundary $\bd(\clorthant)$ of $\clorthant$.
\end{enumerate}
\smallskip
If $\hreg$ is a barrier function as above, the \acdef{HR} metric induced by $\hreg$ is defined as
\begin{equation}
\label{eq:HR}
\hmat(\point)
	= \Hess \hreg(\point)
	\quad
	\text{for all $\point\in\orthant$}.
\end{equation}
\end{definition}
\smallskip

\begin{remark}
The systematic study of \acl{HR} metrics dates back at least to Duistermaat \cite{Dui01}.
In the context of convex programming, these metrics were popularized by the authors of \cite{BT03,ABB04,ABRT04} who introduced the \acl{HR} gradient dynamics \eqref{eq:HRGD} discussed in \cref{sec:introduction}.
With regard to terminology, \cref{def:HR} essentially follows the setup of \cite{ABB04} with a number of simplifications aimed to take advantage of the specific structure of the non-negative orthant.
\end{remark}
\smallskip

\begin{remark}
Up to mild differences, the notion of a barrier function essentially coincides with that of a \ac{DGF} as used to derive the \acl{MD} algorithm \cite{NY83,Nes09}.
A detailed discussion of the connections between \acl{HR} metrics and \acl{MD} would take us too far afield, so we refer the reader to \cite{ABB04,AusTeb06} for a more general treatment.
\end{remark}
\smallskip

A systematic way of constructing barrier functions on $\orthant$ is to take separable sums of the form
\begin{equation}
\label{eq:separable}
\hreg(\point)
	= \sum_{i=1}^{\nDims} \theta_{i}(\point_{i})
\end{equation}
where each function $\theta_{i}\from(0,\infty)\to\R$ is a barrier function on $(0,\infty) = \R_{++}$ (viewed here as the positive orthant of $\R$).
For technical reasons, it will be convenient to assume two further conditions for $\theta_{i}$, leading to the following definition:

\begin{definition}
\label{def:kernel}
We say that $\theta\from(0,\infty)\to\R$ is a \emph{metric-inducing kernel} if:
\smallskip
\begin{enumerate}
[\indent(\itshape a\hspace*{.5pt}\upshape)]
\addtolength{\itemsep}{\smallskipamount}
\item
\label{itm:barrier}
$\theta$ is twice continuously differentiable on $(0,\infty)$, $\theta''$ is positve and locally Lipschitz continuous on $(0,\infty)$, and $\lim_{t\to0^{+}} \theta'(t) = -\infty$.
\item
\label{itm:strong}
$\inf_{t>0} \theta''(t) > 0$, \ie $\theta''(t)\geq\beta$ for some $\beta>0$ and all $t\in(0,\infty)$.
\item
\label{itm:growth}
$\inf_{t>0} t\theta''(t)>0$, \ie $t\theta''(t)\geq\eps$ for some $\eps>0$ and all $t\in(0,\infty)$.
\end{enumerate}
\end{definition}
\smallskip

Of the above requirements, \itemref{itm:barrier} simply specializes the barrier function requirements of \cref{def:HR} to $(0,\infty)$.
Requirement \itemref{itm:strong} strengthens the strict convexity assumption by essentially positing strong convexity over $(0,\infty)$;
this assumption can be dropped altogether, but we use it to simplify our arguments later on.%
\footnote{If $\points$ is compact, it suffices to have $\inf_{t} \theta''(t) > 0$ on any compact subset of $(0,\infty)$, and this holds trivially by the positivity and continuity of $\theta''$.
In the general case, the boundedness requirement of \cref{asm:basic}\itemref{itm:bounded} can be used to a similar effect because all our analysis takes place in the sublevel set $\setdef{\point\in\points}{\obj(\point) \leq \obj(\point^{\start})}$.}
Finally, \itemref{itm:growth} essentially posits that $\theta''(t)$ grows at least as $\bigoh(1/t)$ as $t\searrow0^{+}$.
This ``sufficient growth'' requirement plays an important technical role later on in our analysis but is relatively mild otherwise.%
\footnote{Coupled with the requirement $\lim_{t\to0^{+}} \theta'(t) = -\infty$, the growth condition \itemref{itm:growth} only fails for fringe examples such as $\theta''(t) = 1/(t\log t)$ and the like.}

For concreteness, we provide some standard examples of kernel functions below:
\smallskip
\begin{enumerate}
\addtolength{\itemsep}{\smallskipamount}

\item{\makebox[15em]{\emph{Regularized Gibbs entropy:}\hfill}}
	$\theta(t) = \frac{1}{2}\beta t^{2} + t \log t$.

\item{\makebox[15em]{\emph{Regularized Tsallis entropy:}\hfill}}
	$\theta(t) = \frac{1}{2}\beta t^{2} + \frac{1}{(1-\pexp)(2-\pexp)} t^{2-\pexp}$, $\pexp\in(1,2)$.

\item{\makebox[15em]{\itshape Regularized log-barrier \textpar{Burg}:\hfill}}
	$\theta(t)= \frac{1}{2}\beta t^{2} - \log t$.
\end{enumerate}
\smallskip

The above examples only provide a snapshot of possible choices;
for more examples, see \cite{ABB04,MerSan18}.
We should also note that the regularization term $\frac{1}{2}\beta t^{2}$ is only included to guarantee that $\inf_{t}\theta''(t) \geq \beta$.
As we discussed above, this requirement can be dropped, corresponding to the baseline case $\beta=0$ (the examples we presented in the introduction were all taken with $\beta=0$).
It is also clear that these functions can be combined to generate mixture functions preserving the defining properties of a metric-inducing kernel.
For instance, modulo the regularization term $\frac{1}{2}\beta t^{2}$, Tseng et al. \cite{TseBomSch11} considered the mixture
\begin{equation}
\typo{\theta_{\gamma}(t)}
	= \frac{1}{2} \beta t^{2}
	+ \begin{cases}
	t\log t - t
		&\quad
		\text{if $\gamma=1/2$},
		\\
	\frac{1}{2(1-\gamma)(1-2\gamma)} t^{2(1-\gamma)}
		&\quad
		\text{if $\gamma\in(1/2,1)$},
		\\
	-\log t
		&\quad
		\text{if $\gamma=1$},
	\end{cases}
\end{equation}
which provides a continuous \typo{homotopy} interpolation \typo{of $1/\theta_\gamma''(t)$} between the Gibbs and Burg kernels for $\gamma=1/2$ and $\gamma=1$ respectively (the range $0<\gamma<1/2$ is not considered here because it violates the steepness requirement $\lim_{t\searrow0^{+}}\theta'(t) = -\infty$).

The benefit of using a metric-inducing kernel as above is that the resulting \acl{HR} metric takes the convenient diagonal form
\begin{equation}
\hmat(\point)
	= \diag(\theta_{1}''(\point_{1}),\dotsc,\theta_{\nDims}''(\point_{\nDims}))
\end{equation}
which leads to the straightforward expression $\hmat(\point)^{-1} = \diag(1/\theta_{1}''(\point_{1}),\dotsc,1/\theta''(\point_{\nDims}))$.
By \cref{def:kernel}\itemref{itm:growth}, the inverse matrix $\hmat(\point)^{-1}$ can be extended continuously to the boundary $\bd(\clorthant)$ of $\clorthant$ in the obvious way, and its explicit diagonal form greatly facilitates our analysis in the next sections.
Unless explicitly mentioned otherwise, all \acl{HR} metrics in what follows will be assumed to come from a kernel function as above;
for a more general treatment, see \cite{ABB04}.

\section{The \acl{HBA}}
\label{sec:algorithm}

Viewed abstractly, the \acl{HBA} can be formulated as a recursive update rule of the general form
\begin{equation}
\label{eq:update}
\new\point
	= \point + \step\tvec.
\end{equation}
Specifically, given an input state $\point\in\points$, a new state $\new\point\in\points$ is produced by taking a step along the \revise{tangent search direction $\tvec\in\scrA_{0}$}, properly scaled by the step-size $\step>0$.
In the rest of this section, we discuss in detail the definition of the search direction $\tvec$ and the step-size $\step$.

\subsection{The search direction}
\label{sec:search}

Given a Hessian Riemannian metric $\hmat(\point) \equiv \Hess \hreg(\point)$ on $\intpoints$, the algorithm's search direction will be determined by solving a quadratic optimization problem of the form
\begin{equation}
\begin{aligned}
\label{eq:cost-gain}
\textrm{minimize}
	&\quad
	\nabla\obj(\point)^{\top} \tvec
	+ \frac{1}{2} \norm{\tvec}_{\point}^{2}
	\\
\textrm{subject to}
	&\quad
	\cmat\tvec = 0,
\end{aligned}
\end{equation}
with the norm $\norm{\cdot}_{\point}$ prescribed by some \acl{HR} metric on $\orthant$ as in the previous section.
Heuristically, the linear term $\nabla\obj(\point)^{\top} \tvec \equiv \obj'(\point;\tvec)$ simply captures the corresponding first-order change in the value of $\obj$ along $\tvec$;
analogously, the quadratic term in \eqref{eq:cost-gain} can be interpreted as a ``cost of motion'' along $\tvec$.
As such, \eqref{eq:cost-gain} identifies the direction of steepest descent modulo the cost of taking said step.%
\footnote{For a game-theoretic analogue of this idea, see \cite{MerSan18}.}

From an algebraic standpoint, a standard calculation shows that the solution of \eqref{eq:cost-gain} is simply the (negative) \acl{HR} gradient of $\obj$ at $\point$, \ie it is equal to
\begin{equation}
\vecfield(\point)
	\equiv -\grad_{\points} \obj(\point)
	= -\projmat(\point) \hmat(\point)^{-1} \nabla \obj(\point).
\end{equation}
Perhaps more intuitively, this search direction also coincides with the solution of the trust-region problem
\begin{equation}
\label{eq:trust}
\begin{aligned}
\textrm{minimize}
	&\quad
	\nabla\obj(\point)^{\top} \tvec
	\\
\textrm{subject to}
	&\quad
	\cmat\tvec = 0,\;
	\norm{\tvec}_{\point} \leq r
\end{aligned}
\end{equation}
when $r>0$ is large enough.%
\footnote{In particular, it suffices to take $r$ equal to the minimum value of \eqref{eq:cost-gain}.}
The above shows that a search vector chosen in this way maximizes the first-order decrease in the value of $\obj$ over all vectors with bounded norm.
In turn, this exhibits the close connection of Hessian Riemannian descent methods to interior-point trust-region methods as in \cite{BonPol199,HaeLiuYe18};
we will return to this point later.

We close this section with the straightforward observation that the zeros of the search direction $\vecfield(\point)$ correspond precisely to the critical points of \eqref{eq:Opt}:

\begin{lemma}
\label{lem:critical}
For all $\point\in\intpoints$, we have $\vecfield(\point)=0$ if and only if $\nabla\obj(\point)\in\scrA_{0}^{\bot} \equiv \image(A^{\top})$.
\end{lemma}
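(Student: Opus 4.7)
The plan is to avoid a brute-force matrix manipulation of the closed-form expression for $\projmat(\point)$ and instead rely on the intrinsic characterization of the restricted Riemannian gradient given in \eqref{eq:grad-res}, together with the positive-definiteness of the \acl{HR} metric $\hmat(\point)$. By \eqref{eq:v} and \eqref{eq:grad-res-coords}, the vector $\vecfield(\point) = -\grad_{\intpoints}\obj(\point)$ is the unique element of $\scrA_{0}$ satisfying
\begin{equation*}
\product{\vecfield(\point)}{\tvec}_{\point}
	= -\nabla\obj(\point)^{\top}\tvec
	\quad\text{for every } \tvec\in\scrA_{0}.
\end{equation*}
This single identity handles both implications cleanly.

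For the forward direction, if $\vecfield(\point)=0$, the displayed identity immediately yields $\nabla\obj(\point)^{\top}\tvec=0$ for all $\tvec\in\scrA_{0}$, i.e.\ $\nabla\obj(\point)\in\scrA_{0}^{\bot}$; since $\scrA_{0}=\ker\cmat$, elementary linear algebra gives $\scrA_{0}^{\bot}=\image(\cmat^{\top})$. Conversely, if $\nabla\obj(\point)\in\scrA_{0}^{\bot}$, then the right-hand side vanishes for every $\tvec\in\scrA_{0}$, so $\product{\vecfield(\point)}{\tvec}_{\point}=0$ for all $\tvec\in\scrA_{0}$. Since $\vecfield(\point)$ itself lies in $\scrA_{0}$, I would specialize to $\tvec=\vecfield(\point)$ and invoke positive-definiteness to get $\norm{\vecfield(\point)}_{\point}^{2}=0$, hence $\vecfield(\point)=0$.

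There is essentially no obstacle here beyond keeping the orthogonality conventions straight: the key point is that the $\scrA_{0}$-valued vector $\vecfield(\point)$ is characterized by a linear functional on $\scrA_{0}$, so it vanishes exactly when that functional is identically zero, which is the definition of $\nabla\obj(\point)\in\scrA_{0}^{\bot}$. If one prefers an algebraic verification, the same conclusion can be read off directly from $\projmat(\point)\hmat(\point)^{-1}\nabla\obj(\point)=0$ by multiplying through by $\hmat(\point)$ and identifying $\dsol = (\cmat\hmat(\point)^{-1}\cmat^{\top})^{-1}\cmat\hmat(\point)^{-1}\nabla\obj(\point)$ as the multiplier certifying $\nabla\obj(\point)=\cmat^{\top}\dsol\in\image(\cmat^{\top})$; this also makes the connection to the multiplier appearing in \eqref{eq:KKT1}--\eqref{eq:KKT2} transparent, which will be useful in subsequent results.
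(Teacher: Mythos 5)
Your proof is correct, and it is exactly the argument the paper has in mind: the paper omits the proof, describing it as ``an elementary consequence of the definition of $\vecfield(\point)$,'' which is precisely what you carry out via the defining identity $\product{\vecfield(\point)}{\tvec}_{\point} = -\nabla\obj(\point)^{\top}\tvec$ for $\tvec\in\scrA_{0}$ and positive-definiteness of $\hmat(\point)$. Both your intrinsic argument and your algebraic fallback (identifying the multiplier $\dsol$ from the closed form of $\projmat(\point)$) are valid and consistent with the paper's conventions.
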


The proof of \cref{lem:critical} is an elementary consequence of the definition of $\vecfield(\point)$, so we omit it.
We only mention this result here to highlight the fact that the update rule \eqref{eq:update} with search direction $\vecfield(\point)$ remains stationary if the input state $\point$ is a zero of $\vecfield(\point)$.
In what follows, we use this fact freely without referring to it explicitly.

\subsection{The method's step-size}
\label{sec:step}

The main challenge in setting the method's step-size is twofold:
\begin{inparaenum}
[\itshape a\upshape)]
\item
we need to guarantee that $\new\point$ is feasible for all input states $\point\in\intpoints$;
and
\item
the method should exhibit ``sufficient decrease'' in the sense that $\obj(\new\point)$ is sufficiently smaller than $\obj(\point)$ at each step.
\end{inparaenum}

We begin with the issue of feasibility.
To that end, adopting terminology which is common in the affine scaling literature, consider the ``dual variable''
\begin{flalign}
\label{eq:dual}
y(\point)
	= (\cmat\hmat(\point)^{-1}\cmat^{\top})^{-1}\cmat\hmat(\point)^{-1}\nabla \obj(\point)
\end{flalign}
and the ``reduced cost''
\begin{flalign}
\label{eq:reduced}
r(\point)
	= \nabla \obj(\point) - \cmat^{\top}y(\point)
	= -\hmat(\point)\vecfield(\point).
\end{flalign}
Since the Hessian $\hmat(\point)$ is diagonal \typo{by construction}, we can use the reduced cost vector $r(\point)$ to rewrite the update rule \eqref{eq:update} in components as
\begin{flalign}
\new\point_{i}
	= \point_{i} - \step(\point) \frac{r_{i}(\point)}{\theta_{i}''(\point_{i})}
	= \point_{i} \parens*{1 - \frac{\step(\point) r_{i}(\point)}{\point_{i}\theta_{i}''(\point_{i})}}.
\end{flalign}
Consequently, we will have $\new\point_{i} > 0$ if \typo{either $r_{i}(\point)\leq0$ or else}
\begin{equation}
\step(\point)
	< \frac{x_{i}\theta_{i}''(\point_{i})}{r_{i}(\point)}.
\end{equation}
Hence, to guarantee feasibility, it suffices to take $\step(\point) < \step_{0}(\point)$ where
\begin{equation}
\label{eq:alpha0}
\step_{0}(\point)
	= \min_{i=1,\dotsc,\nDims}\setdef{\point_{i}\theta_{i}''(\point_{i})/r_{i}(\point)}{r_{i}(\point) > 0},
\end{equation}
with the usual convention $\min\varnothing = \infty$.

Now, to decrease the value of the objective function at each step of the algorithm, our starting point will be the well-known descent inequality \cite{Pol87}
\begin{equation}
\label{eq:descent}
\obj(\pointalt) - \obj(\point)
	\leq \nabla \obj(\point)^{\top}(\pointalt - \point)
	+ \frac{L}{2} \norm{\pointalt - \point}_{2}^{2},
\end{equation}
which holds for all $\point,\pointalt\in\points$.
Then, taking $\pointalt = \point + \lambda \vecfield(\point)$ in \eqref{eq:descent} and using the angle relation \eqref{eq:angle}, we get
\begin{flalign}
\obj(\point + \lambda \vecfield(\point)) - \obj(\point)
	&\leq -\lambda \norm{\vecfield(\point)}_{\point}^{2}
	+ \frac{1}{2} \lambda ^{2} L \norm{\vecfield(\point)}_{2}^{2}
	\notag\\
	&\leq -\beta\lambda \norm{\vecfield(\point)}_{2}^{2}
	+ \frac{1}{2} \lambda ^{2} L \norm{\vecfield(\point)}_{2}^{2}
	\notag\\
	&= - \beta\lambda \, \parens*{1 - \frac{\lambda L}{2\beta}} \, \norm{\vecfield(\point)}_{2}^{2},
\end{flalign}
where, in the second line, we used the fact that $\norm{\tvec}_{\point}^{2} = \tvec^{\top} \hmat(\point) \tvec \geq \beta \tvec^{\top} \tvec = \beta \norm{\tvec}_{2}^{2}$.

In view of the above, feasibility and descent are both guaranteed as long as the step-size $\step(\point)$ of the method \typo{at the point} $\point\in\points$ is less than $\min\{\step_{0}(\point),2\beta/L\}$.
To proceed, we will further employ an Armijo backtracking procedure to guarantee \emph{sufficient decrease}, \ie that
\begin{equation}
\label{eq:sufficient}
\obj(\new\point)
	\leq \obj(\point) - \armijo\cdot\step(\point) \norm{\vecfield(\point)}_{\point}^{2}
\end{equation}
for some $\armijo\in(0,1)$.
To achieve this, we bootstrap the process with the step-size
\begin{equation}
\label{eq:step-start}
\underline\step(\point)
	= \min\{\step_{0}(\point),2\beta/L\}.
\end{equation}
If \eqref{eq:sufficient} is satisfied with $\step(\point) = \underline\step(\point)$, we will accept the iterate $\new\point$ generated from \eqref{eq:update};
otherwise, we shrink the step-size $\underline\step(\point)$  by a factor of $\shrink\in(0,1)$, and we keep backtracking until \eqref{eq:sufficient} is satisfied.%
\footnote{In practice, $\armijo$ is chosen very small (around $10^{-4}$), while typical values for $\shrink$ lie in the range between $0.1$ and $0.5$ \cite{NocWri00}.}
Formally, this means that the step-size of the method will be of the form $\step(\point) = \shrink^{\ell} \underline\step(\point)$ where  $\ell\geq0$ is the first nonnegative integer such that
\begin{equation}
\label{eq:Armijo}
\obj(\point + \shrink^{\ell}\underline\step(\point) \vecfield(\point)) - \obj(\point)
	\leq -\armijo \shrink^{\ell} \underline\step(\point) \norm{\vecfield(\point)}_{\point}^{2}.
\end{equation}
\cref{lem:step} in the next section shows that this backtracking process terminates after a finite number of steps.
In this way, we obtain a well-defined step-size policy which simultaneously guarantees feasibility and sufficient decrease.

\subsection{The \acl{HBA}}
\label{sec:HBA}

\typo{Combining} all of the above, the \acdef{HBA} can be stated in recursive form as
\begin{equation}
\label{eq:HBA}
\tag{HBA}
\point^{\run+1}
	= \point^{\run}
	- \step(\point^{\run}) \projmat(\point^{\run}) \hmat(\point^{\run})^{-1} \nabla\obj(\point^{\run})
\end{equation}
where
\begin{enumerate}
\item
$\run = \running$, is the algorithm's iteration counter.
\item
$\point^{\run}$ denotes the state of the algorithm at step $\run$;
the algorithm is initialized at a point $\point^{0}$ satisfying \cref{asm:basic}\itemref{itm:bounded}.
\item
$\step(\point)$ is the algorithm's step-size at state $\point$, defined implicitly via the Armijo backtracking process described in the previous section.
\item
$\projmat(\point)$ and $\hmat(\point)$ are determined by a \acl{HR} metric chosen by the optimizer (\cf \cref{sec:HR}).
\end{enumerate}
For a pseudocode implementation of \eqref{eq:HBA}, see \cref{alg:HBA}.
\smallskip


\begin{algorithm}[tbp]
\caption{\acf{HBA}}
\label{alg:HBA}

\tt
\begin{algorithmic}[1]
\Require
	sufficient decrease factor $\armijo\in(0,1)$,
	shrink factor $\shrink\in(0,1)$
	\vspace{1ex}
\State
	initialize $x\in\points$
	\Comment{initialization}
\While{stopping criterion not satisfied}
	\State
		$v \leftarrow -\grad_{\points} \obj(x)$
		\Comment{search direction}
	\State
		$\step \typo{\,\leftarrow} \min\{\step_{0}(x),2\beta/L\}$
		\Comment{set step-size}
	\State
		$x^{+} \leftarrow x + \step v$
		\Comment{set test point}
	\While{$\obj(x^{+}) > \obj(x) - \armijo\step\norm{v}_{x}^{2}$}
		\Comment{suff.\hspace{-1.5ex} decrease?}
		\State
			$\step \leftarrow \shrink\step$
			\Comment{shrink step-size}
		\State
			$x^{+} \leftarrow x + \step v$
			\Comment{update test point}
		\EndWhile
	\State
		$x \leftarrow x^{+}$
		\Comment{new state}
\EndWhile
\State
	\Return $x$
\end{algorithmic}
\end{algorithm}


\typo{Importantly,} even though \eqref{eq:HBA} looks similar to the interior gradient methods of \cite{AusTeb06,AusSilTeb07}, the actual update steps performed are fundamentally different.
Specifically, the gradient method of Auslender and Teboulle \cite{AusTeb06} performs at each iteration a prox-step using a Bregman function to ensure that the algorithm's iterates remain in the problem's feasible region \textendash\ recall the definition of \eqref{eq:MD}.
This approach implicitly assumes that the problem's constraint set is sufficiently ``simple'' for the Bregman proximal step to be performed in a computationally efficient way;
\eqref{eq:HBA} does not require a prox-step, so it is more lightweight in that respect.
\section{Global convergence analysis}
\label{sec:global}

To present our convergence analysis, two more definitions are required.
Specifically, if $\point^{\run}$, $\run = \running$, is the sequence of iterates generated by \eqref{eq:HBA}, we write
\begin{flalign}
\scrL
	&= \setdef{\hat\point\in\points}{\text{some subsequence $\point^{\run_{\runalt}}$ of $\point^{\run}$ converges to $\hat\point$}}
\intertext{for the set of limit points of the algorithm, and we let}
\Lambda
	&= \setdef{\hat\point\in\points}{\text{$\lim_{k\to\infty}\obj(\point^{\run}) = \obj(\hat\point)$ and $\diag(\hat\point) r(\hat\point) = 0$}}
\end{flalign}
Our main convergence result may then be stated as follows:

\begin{theorem}
\label{thm:main}
With notation as above, we have:
\begin{enumerate}
[\indent\upshape(\itshape a\hspace*{.5pt}\upshape)]
\item
The sequence $\point^{\run}$ is bounded and $\obj(\point^{\run})$ is non-increasing.
\item
Every point $\sol\in\scrL$ satisfies complementarity in the sense that $r_{i}(\sol) = 0$ whenever $\sol_{i} > 0$.
In particular, $\scrL\subseteq\Lambda$, so $\obj(\point^{\run})$ converges.
\item
Every limit point of $\point^{\run}$ is a \ac{KKT} point of $\obj$, provided one of the following conditions holds:
\begin{enumerate}
[\upshape(1)]
\item
\label{itm:cvx}
$\obj$ is convex;
in this case $\point^{\run}$ converges to $\argmin\obj$.
\item
\label{itm:isolated}
$\Lambda$ consists of isolated points.
\item
\label{itm:strict}
Every point in $\Lambda$ satisfies strict complementarity, \ie $\point_{i} + r_{i}(\point) > 0$ for all $i\in S=\{1,\dotsc,\nDims\}$.
\end{enumerate}
\end{enumerate}
\end{theorem}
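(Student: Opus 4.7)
\emph{Part (a).} I would first show that the Armijo backtracking in \eqref{eq:Armijo} terminates after a uniformly bounded number of halvings. Combining the descent lemma \eqref{eq:descent}, the angle identity \eqref{eq:angle}, and the strong-convexity bound $\norm{\vecfield(\point)}_\point^2\geq\beta\norm{\vecfield(\point)}_2^2$ from \cref{def:kernel}\itemref{itm:strong}, a short calculation gives
\[
\obj(\point+\lambda\vecfield(\point))-\obj(\point)\leq -\lambda\bigl(1-\tfrac{\lambda L}{2\beta}\bigr)\norm{\vecfield(\point)}_\point^2,
\]
which makes the Armijo test pass as soon as $\lambda\leq 2\beta(1-\armijo)/L$. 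This yields the key lower bound $\step(\point^{\run})\geq\shrink\min\{\step_0(\point^{\run}),\,2\beta(1-\armijo)/L\}$. Feasibility of $\point^{\run+1}$ is then automatic from \eqref{eq:alpha0} and \eqref{eq:step-start}, and monotonicity of $\obj(\point^{\run})$ follows from \eqref{eq:sufficient}, so the iterates stay inside the compact sublevel set $[\obj\leq\obj(\point^{\start})]$ guaranteed by \cref{asm:basic}\itemref{itm:bounded}.

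\emph{Part (b).} Telescoping \eqref{eq:sufficient} gives $\sum_\run\step(\point^{\run})\norm{\vecfield(\point^{\run})}_{\point^{\run}}^2<\infty$, and the step-size bound from Part (a) promotes this to
\[
\min\!\bigl(\step_0(\point^{\run}),\,c\bigr)\,\norm{\vecfield(\point^{\run})}_{\point^{\run}}^2\to 0,\qquad c:=2\beta(1-\armijo)/L.
\]
For a limit point $\hat\point$ along $\point^{\run_{\runalt}}\to\hat\point$, I would split into two cases. If some further subsequence has $\norm{\vecfield(\point^{\run_{\runalt}})}_{\point^{\run_{\runalt}}}\to 0$, the identity $\norm{\vecfield}_\point^2=\sum_i r_i(\point)^2/\theta_i''(\point_i)$ combined with $\theta_i''(\point^{\run_{\runalt}}_i)\to\theta_i''(\hat\point_i)\in(0,\infty)$ for every $i\in S_{\hat\point}$ immediately forces $r_i(\point^{\run_{\runalt}})\to 0$ on the support of $\hat\point$, giving complementarity. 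Otherwise $\step_0(\point^{\run_{\runalt}})\to 0$; pigeonholing the minimizing index in \eqref{eq:alpha0} singles out an $i^*$ with $r_{i^*}(\point^{\run_{\runalt}})>0$ and $\point^{\run_{\runalt}}_{i^*}\theta''_{i^*}(\point^{\run_{\runalt}}_{i^*})/r_{i^*}(\point^{\run_{\runalt}})\to 0$, and the growth condition \cref{def:kernel}\itemref{itm:growth} forces $r_{i^*}(\point^{\run_{\runalt}})\to\infty$. Since $\vecfield$ is bounded via the projection estimate $\norm{\vecfield}_\point\leq\norm{\hmat^{-1}\nabla\obj}_\point$, this can only happen if $\hat\point_{i^*}=0$, so complementarity already holds at $i^*$; propagating it to the remaining coordinates of $S_{\hat\point}$ will require combining the defining inequality $\step_0(\point^{\run_{\runalt}})r_i\leq\point^{\run_{\runalt}}_i\theta_i''(\point^{\run_{\runalt}}_i)$ with the vanishing of $\step_0\norm{\vecfield}_\point^2$ and the growth bound. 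Once complementarity is established on $\scrL$, monotone convergence of $\obj(\point^{\run})$ yields $\scrL\subseteq\Lambda$.

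\emph{Part (c).} Strict complementarity \itemref{itm:strict} upgrades the Part~(b) relation $\hat\point_i r_i(\hat\point)=0$ directly to $r_i(\hat\point)\geq 0$ and hence to the full \eqref{eq:KKT} system. Under isolation \itemref{itm:isolated}, the square-summable increments $\norm{\point^{\run+1}-\point^{\run}}=\step(\point^{\run})\norm{\vecfield(\point^{\run})}$ and a standard Ostrowski-type argument force the whole sequence to concentrate at a single isolated point of $\Lambda$, and passing to the limit in \eqref{eq:KKT} closes the argument. In the convex case \itemref{itm:cvx}, the complementarity identity $\hat\point^\top r(\hat\point)=0$ and the supporting-hyperplane inequality $\obj(\point)\geq\obj(\hat\point)+r(\hat\point)^\top\point$ for every feasible $\point$ imply that a coordinate with $\hat\point_i=0$ and $r_i(\hat\point)<0$ would supply a feasible descent direction incompatible with the monotone convergence $\obj(\point^{\run})\downarrow\obj(\hat\point)$; this forces $\hat\point\in\sols=\argmin\obj$, and the standard Fejér-type argument using $\breg(\cdot,\point^{\run})$ as a Lyapunov function promotes this to convergence of the full sequence to the minimum set.

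\emph{The main obstacle} is precisely Case 2 of Part (b), which closes the gap left open by \cite{TseBomSch11}: since the Armijo step-size may degenerate along a subsequence, one cannot read off $\norm{\vecfield(\point^{\run_{\runalt}})}\to 0$ directly, and the quantitative growth bound $t\theta_i''(t)\geq\eps$ from \cref{def:kernel}\itemref{itm:growth} is exactly what turns the pigeonhole selection of $i^*$ into the decisive assertion $r_{i^*}\to\infty$ and propagates complementarity to the full support of $\hat\point$ without any a priori non-degeneracy assumption on $\step$.
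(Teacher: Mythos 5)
Your Part (a) and the skeleton of Part (b) follow the paper's route (Armijo termination via the descent inequality \eqref{eq:descent} and the bound $\norm{\tvec}_{\point}^{2}\ge\beta\norm{\tvec}_{2}^{2}$, then telescoping the sufficient-decrease condition), but there is a genuine gap exactly at the point you flag as ``the main obstacle'', and your proposed resolution does not close it. The paper's key technical step (\cref{lem:lower}) is to prove that the reduced cost $r(\point)$ \textendash\ equivalently the dual variable $y(\point)=(\cmat\hmat(\point)^{-1}\cmat^{\top})^{-1}\cmat\hmat(\point)^{-1}\nabla\obj(\point)$ \textendash\ is \emph{bounded} on the sublevel set $[\obj\le\obj(\point^{0})]$; this is done by expanding $y_{i}(\point)$ via Cramer's rule and the Cauchy--Binet formula and bounding the resulting ratio of determinant sums. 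Combined with $\point_{i}\theta_{i}''(\point_{i})\ge\eps$ this yields $\inf\step_{0}>0$, hence $\inf_{\run}\step^{\run}>0$ (\cref{lem:step}), hence $\norm{\vecfield(\point^{\run})}_{\point^{\run}}\to0$ along the whole sequence \textendash\ so your ``case 2'' ($\step_{0}(\point^{\run_{\runalt}})\to0$) simply never occurs. In your blind version case 2 is left open: the pigeonholed coordinate $i^{*}$ satisfies $\hat\point_{i^{*}}=0$, but that coordinate is irrelevant for complementarity (which only constrains $i\in S_{\hat\point}$), and the propagation you sketch cannot work as stated: for $i\in S_{\hat\point}$ you only control $\step_{0}(\point^{\run_{\runalt}})\,r_{i}(\point^{\run_{\runalt}})^{2}/\theta_{i}''(\point_{i}^{\run_{\runalt}})\to0$, and since $\step_{0}\to0$ in this case this gives no information on $r_{i}$ at all. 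Without a uniform bound on $r$ (or on $y$) you cannot exclude $r_{i}(\point^{\run_{\runalt}})$ diverging or oscillating on the support of $\hat\point$, and complementarity does not follow.

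Part (c) has a second, independent gap: in cases \itemref{itm:cvx} and \itemref{itm:isolated} you never rule out $r_{j}(\sol)<0$ at coordinates with $\sol_{j}=0$. Complementarity $\diag(\sol)r(\sol)=0$ plus convexity (or isolation of $\Lambda$) does not by itself give $r(\sol)\ge0$; ``passing to the limit in \eqref{eq:KKT}'' and the ``feasible descent direction incompatible with monotone convergence'' heuristic are not arguments \textendash\ a descent method can in principle stall at a non-KKT point, and what forbids it here is dynamical, not variational. The paper's mechanism is to show that $r_{j}(\point^{\run})<0$ holds along the \emph{entire tail} of the sequence (this is what \cref{lem:rconstant,lem:omega1,lem:omega2} are for in the convex case: they localize all accumulation points in a set on which $r$ is constant), and then to read off from the update rule that $\point_{j}^{\run+1}=\point_{j}^{\run}-\step^{\run}r_{j}(\point^{\run})/\theta_{j}''(\point_{j}^{\run})>\point_{j}^{\run}$, so $\point_{j}^{\run}$ is eventually increasing, contradicting $\sol_{j}=0$; the same sign argument is what the paper runs in cases \itemref{itm:isolated} and \itemref{itm:strict}. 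Finally, your Fej\'er/Bregman claim for full-sequence convergence in the convex case is unsupported: \eqref{eq:HBA} is a forward scheme, not a Bregman prox step, and the paper only asserts $\dist(\point^{\run},\argmin\obj)\to0$, which already follows from $\scrL\subseteq\argmin\obj$ and \cref{lem:LimitSet}.
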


\cref{thm:main} can be seen as the bona fide, algorithmic analogue of the continuous-time analysis of Alvarez et al. \cite{ABB04} of \acl{HR} gradient flows.
To the best of our knowledge, the closest result of this type in the literature is the convergence analysis of Tseng et al. \cite{TseBomSch11} for a replicator-type descent algorithm applied to quadratic programs in standard from.
However, the results of \cite{TseBomSch11} rely crucially on the assumption that the algorithm's step-size does not become vanishingly small in the limit:
this assumption is a major obstacle to the applicability of the analysis of \cite{TseBomSch11}, as there is no way to verify it from the problem's primitives.
Dropping this assumption requires a delicate \textendash\ and intricate \textendash\ argument which takes up the first part of the remainder of this section.

\subsection{Step-size analysis}
\label{sec:step-analysis}

As stated above, our main goal in what follows is to show that the algorithm's step-size sequence $\step^{\run} \equiv \step(\point^{\run})$ is bounded away from zero.
We begin with a trivial upper bound which we state only for completeness:

\begin{lemma}
\label{lem:upper}
The step-size sequence $\step^{\run} \equiv \step(\point^{\run})$ of \eqref{eq:HBA} satisfies $\sup_{\run} \step^{\run} < \infty$.
\end{lemma}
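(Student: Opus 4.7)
The plan is to read off the bound directly from the construction of the step-size described in Section~\ref{sec:step}. Recall that at each iterate $\point^{\run}$, the step-size produced by the Armijo backtracking scheme has the form
\begin{equation*}
\step^{\run} \;=\; \shrink^{\ell_{\run}}\,\underline{\step}(\point^{\run}),
\qquad
\underline{\step}(\point^{\run}) \;=\; \min\{\step_{0}(\point^{\run}),\,2\beta/L\},
\end{equation*}
where $\shrink\in(0,1)$ and $\ell_{\run}\in\{0,1,2,\dots\}$ is the first nonnegative integer for which the sufficient-decrease condition \eqref{eq:Armijo} holds.

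First, since $\shrink\in(0,1)$ and $\ell_{\run}\geq0$, we have $\shrink^{\ell_{\run}}\leq1$ for every $\run$. Therefore $\step^{\run}\leq\underline{\step}(\point^{\run})$. Next, by the very definition of $\underline{\step}(\point^{\run})$ as a minimum involving the constant $2\beta/L$, we obtain $\underline{\step}(\point^{\run})\leq 2\beta/L$ uniformly in $\run$. Chaining these two bounds gives $\step^{\run}\leq 2\beta/L$ for all $\run$, hence $\sup_{\run}\step^{\run}\leq 2\beta/L<\infty$.

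There is really no obstacle here: the bound is baked into the initialization of the backtracking loop, and backtracking can only \emph{decrease} the trial step. The lemma is stated only to isolate, for later reference, the trivial side of the much harder lower bound that the remainder of Section~\ref{sec:step-analysis} is devoted to establishing.
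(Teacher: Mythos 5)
Your argument is correct and is exactly the reasoning the paper has in mind: the paper states this lemma ``only for completeness'' and omits the proof, since the bound $\step^{\run}\leq\underline\step(\point^{\run})\leq 2\beta/L$ follows immediately from the initialization $\underline\step(\point)=\min\{\step_{0}(\point),2\beta/L\}$ and the fact that backtracking only shrinks the trial step. Nothing to add.
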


To get a lower bound for the algorithm's step-size, we begin by showing that the ``bootstrap'' step-size $\underline\step(\point)$ of \eqref{eq:step-start} is itself bounded away from zero.
\revise{In the context of affine scaling algorithms for linear programming, similar results have been proven in the special case where the Riemannian geometry is generated by the log-barrier kernel (the Burg entropy);
see \cite{LagVan90} for an early result in this direction.%
\footnote{We thank an anonymous referee for mentioning this reference to us.}
This kernel gives rise to very convenient closed-form expressions that greatly simplify the calculations;
however, for the general framework considered here, we need a fairly intricate analysis that cannot be handled by the derivations of \cite{LagVan90}.
We present the relevant calculations below:}

\begin{lemma}
\label{lem:lower}
\typo{We have $\inf\setdef{\underline\step(\point)}{\point\in\intpoints,\obj(\point)\leq \obj(\point^{0})}>0$.}
\end{lemma}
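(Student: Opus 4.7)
The plan is to chain three reductions that transform the lemma into a question about boundedness of the dual variable $y(\point)$ from \eqref{eq:dual}, and then to close that question by a compactness-plus-contradiction argument.

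Since $\underline\step(\point) = \min\{\step_{0}(\point), 2\beta/L\}$ and $2\beta/L$ is a positive constant, it suffices to show $\step_{0}(\point)$ is bounded below. Using the identity $r_{i}(\point) = -\theta_{i}''(\point_{i})\vecfield_{i}(\point)$, together with the growth bound $\point_{i}\theta_{i}''(\point_{i}) \geq \eps$ from \cref{def:kernel}\itemref{itm:growth}, the definition \eqref{eq:alpha0} of $\step_{0}$ yields
\begin{equation*}
\step_{0}(\point) = \min_{i:\, r_{i}(\point) > 0}\frac{\point_{i}\theta_{i}''(\point_{i})}{r_{i}(\point)} \geq \frac{\eps}{\max_{i} r_{i}^{+}(\point)}.
\end{equation*}
Since $r(\point) = \nabla\obj(\point) - \cmat^{\top}y(\point)$ and $\nabla\obj$ is continuous on the compact sublevel set $[\obj \leq \obj(\point^{0})]$ by \cref{asm:basic}, everything reduces to showing
\begin{equation*}
\sup\bigl\{\|y(\point)\|:\point \in \intpoints,\,\obj(\point) \leq \obj(\point^{0})\bigr\} < \infty.
\end{equation*}

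To prove this uniform bound I would argue by contradiction. Suppose there is a sequence $\point^{k} \in \intpoints$ in the sublevel set with $\|y(\point^{k})\| \to \infty$; by compactness extract a convergent subsequence $\point^{k} \to \hat\point$ and put $I = \{i : \hat\point_{i} = 0\}$. The essential tool is the weighted least-squares characterization of $y$ coming from \eqref{eq:dual},
\begin{equation*}
y(\point^{k}) = \argmin_{y \in \R^{\nConsts}}\,\sum_{i=1}^{\nDims} d_{i}^{k}\bigl(\nabla_{i}\obj(\point^{k}) - \cmat_{i}^{\top}y\bigr)^{2},\qquad d_{i}^{k} = 1/\theta_{i}''(\point_{i}^{k}),
\end{equation*}
whose optimal value is bounded above by plugging in $y = 0$: the resulting bound $\sum_{i} d_{i}^{k}\nabla_{i}\obj(\point^{k})^{2} \leq \|\nabla\obj(\point^{k})\|_{2}^{2}/\beta$ is uniform by compactness and by \cref{def:kernel}\itemref{itm:strong}. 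Splitting $y(\point^{k}) = y^{k}_{W} + y^{k}_{\perp}$ along $\R^{\nConsts} = W \oplus W^{\perp}$ with $W = \Span\{\cmat_{i} : i \notin I\}$, the weights $d_{i}^{k}$ for $i \notin I$ stay bounded below by continuity, so the LS bound forces $|\cmat_{i}^{\top}y(\point^{k})|$ to be bounded for each $i \notin I$; since $\{\cmat_{i} : i \notin I\}$ spans $W$, this pins down $\|y^{k}_{W}\|$, and we are reduced to $\|y^{k}_{\perp}\| \to \infty$.

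To rule out the latter scenario, I would project the normal equation $(\cmat D^{k}\cmat^{\top})y(\point^{k}) = \cmat D^{k}\nabla\obj(\point^{k})$, with $D^{k} = \diag(d_{i}^{k})$, onto $W^{\perp}$. Writing $\tilde\cmat_{i}$ for the orthogonal projection of $\cmat_{i}$ onto $W^{\perp}$, this yields an auxiliary weighted LS problem for $y^{k}_{\perp}$ alone, whose residual targets $\nabla_{i}\obj(\point^{k}) - \cmat_{i}^{\top}y^{k}_{W}$ are bounded, whose weights are $d_{i}^{k}$ for $i \in I$, and whose structural vectors $\{\tilde\cmat_{i}\}_{i \in I}$ span $W^{\perp}$ thanks to the full row-rank assumption on $\cmat$. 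A rescaling of the weights (dividing by $\max_{i \in I}d_{i}^{k}$) makes this auxiliary problem weight-homogeneous, hence invariant under the asymptotic vanishing of the individual $d_{i}^{k}$; a careful analysis of the resulting LS matrix then yields a uniform bound on $\|y^{k}_{\perp}\|$, contradicting $\|y^{k}_{\perp}\| \to \infty$. I expect the main obstacle to arise precisely in this final step, especially at ``degenerate vertices'' where $\{\cmat_{i} : i \notin I\}$ fails to span $\R^{\nConsts}$: there the ratios $d_{i}^{k}/d_{j}^{k}$ for $i,j \in I$ can behave wildly, and the boundedness of $y^{k}_{\perp}$ has to be extracted from the fine structure of $\{\tilde\cmat_{i}\}_{i \in I}$ rather than from any uniform non-degeneracy of the limit system.
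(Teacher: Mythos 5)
Your opening reductions are exactly the paper's: pass from $\underline\step$ to $\step_{0}$, use $\point_{i}\theta_{i}''(\point_{i})\geq\eps$ to reduce to an upper bound on $r_{i}(\point)$, and then to a uniform bound on the dual variable $y(\point)$ over the sublevel set. Up to that point everything is correct. The problem is that the uniform bound on $y(\point)$ \emph{is} the entire content of the lemma, and your argument for it is not closed. Your own diagnosis at the end is accurate: after splitting $y=y_{W}+y_{\perp}$ with $W=\Span\{\cmat_{i}:i\notin I\}$, the projected normal equations give
$\sum_{i\in I}d_{i}^{k}\,\tilde\cmat_{i}\tilde\cmat_{i}^{\top}y_{\perp}^{k}=\sum_{i\in I}d_{i}^{k}\,\tilde\cmat_{i}\bigl(\nabla_{i}\obj(\point^{k})-\cmat_{i}^{\top}y_{W}^{k}\bigr)$,
and after dividing by $\max_{i\in I}d_{i}^{k}$ the limiting matrix $\sum_{i\in I}\lim_{k}(d_{i}^{k}/\max_{j\in I}d_{j}^{k})\,\tilde\cmat_{i}\tilde\cmat_{i}^{\top}$ need not be invertible on $W^{\perp}$: only the indices whose weights decay at the slowest rate survive, and the corresponding $\tilde\cmat_{i}$ need not span $W^{\perp}$. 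At that point one would have to split again and recurse over the hierarchy of decay rates; a single orthogonal decomposition does not suffice. Since you explicitly defer this step (``a careful analysis of the resulting LS matrix then yields a uniform bound''), the proof as written has a genuine gap at its crux.

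For comparison, the paper bounds $y(\point)$ by Cramer's rule together with the Cauchy--Binet formula: both the numerator and the denominator of $y_{i}(\point)$ become sums over $\nConsts$-subsets $k_{1}<\dots<k_{\nConsts}$ with the \emph{same} positive weights $\prod_{j}\theta_{k_{j}}''(\point_{k_{j}})^{-1}$, so the elementary inequality $\abs{\sum_{i}b_{i}}/\sum_{i}\sigma_{i}\leq\max_{i}\abs{b_{i}}/\sigma_{i}$ cancels the weights termwise and yields $\abs{y_{i}(\point)}\leq\max\abs{\det(B_{\point,i}^{k_{1},\dots,k_{\nConsts}})/\det(\cmat^{k_{1},\dots,k_{\nConsts}})}$, a bound completely insensitive to the relative rates at which the $d_{i}^{k}$ vanish. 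This is precisely the degeneracy your scheme struggles with, and it is also the standard proof of the classical fact you are implicitly reproving, namely the uniform boundedness of $(\cmat D\cmat^{\top})^{-1}\cmat D$ over all positive diagonal $D$ for full-row-rank $\cmat$ (Dikin, Stewart, Todd). If you either carry out the recursive splitting in full or simply invoke that classical bound, your argument closes; as it stands, it does not.
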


\begin{proof}
Since $\underline\step(\point) = \min\{\step_{0}(\point),2\beta/L\}$, it suffices to show that
$\inf\setdef{\step_{0}(\point)}{\point\in\intpoints,\obj(\point)\leq \obj(\point^{\start})}>0$.
In turn, by the definition \eqref{eq:alpha0} of the function $\step_{0}(\point)$, it suffices to look at points $\point$ for which $r_{i}(\point)>0$ for some $i=1,\dotsc,\nDims$.
We thus have to bound the quantity $\point_{i}\theta_{i}''(\point_{i})/r_{i}(\point)$ \revise{away from zero}, which, by \cref{def:kernel}, boils down to \revise{showing that $r_{i}(\point)$ is bounded from above}.

Since $r(\point)=\nabla \obj(\point)-\cmat^{\top}y(\point)$, this is achieved once we have an \revise{upper bound} for the ``dual variable'' $y(\point)=(\cmat\hmat(\point)^{-1}\cmat^{\top})^{-1}\cmat\hmat(\point)^{-1}\nabla \obj(\point)$ defined in \eqref{eq:dual}.
To achieve this, define the matrix $M_{\point} = \cmat\hmat(\point)^{-1/2} \in \R^{\nConsts\times\nDims}$, so $y(\point)$ is the unique solution to the linear system
\begin{equation}
M_{\point}M_{\point}^{\top}y
	= M_{\point}\hmat(\point)^{-1/2} \nabla \obj(\point).
\end{equation}
By Cramer's rule, we can explicitly compute the $i$-th coordinate of the vector $y(\point)$ via the formula
\begin{flalign}
y_{i}(\point)
	= \frac{\det\left((M_{\point}M_{\point}^{\top})^{1},\dotsc,M_{\point} \hmat(\point)^{-1/2}\nabla \obj(\point),\dotsc,(M_{\point}M_{\point}^{\top})^{\nConsts}\right)}{\det(M_{\point}M_{\point}^{\top})}.
\end{flalign}
This can be simplified by some straightforward, albeit tedious, algebraic manipulations. 
Indeed, for a matrix $\cmat\in\R^{m\times n}$ let
\begin{equation}
\cmat^{k_{1},\dotsc,k_{\nConsts}}
	= (a^{k_{1}},\dotsc,a^{k_{\nConsts}}),
	\qquad
	1\leq k_{1}< k_{2}< \dotsm <k_{\nConsts}\leq n,
\end{equation}
denote the $\nConsts \times \nConsts$ matrix obtained from the columns $a^{k_{1}},\dotsc,a^{k_{\nConsts}}$ of $\cmat$. 
By the Cauchy-Binet formula, we can compute the denominator as
\begin{flalign}
\det(M_{\point}M_{\point}^{\top})
	&=\sum_{1\leq k_{1}<\dotsm< k_{\nConsts}\leq n}\det\left(M_{\point}^{k_{1},\dotsc,k_{\nConsts}}\right)^{2}.
\end{flalign}
Since the Hessian matrix $\hmat(\point)$ is diagonal, it is immediate that
\begin{equation}
M_{\point}
	=\left[\theta_{1}''(\point_{1})^{-1/2}a^{1},\dotsc,\theta_{\nDims}''(\point_{\nDims})^{-1/2}a^{\nDims}\right],
\end{equation}
implying in turn that $M_{\point}$ can be extended continuously to the entire orthant $\Rn_{+}$ via the convention $1/\infty = 0$. 
We thus get
\begin{flalign}
\det\left(M_{\point}M_{\point}^{\top}\right)
	=\sum_{1\leq k_{1}<\dotsm< k_{\nConsts}\leq n}
		\frac{1}{\theta_{k_{1}}''(\point_{k_{1}})\cdots\theta''_{k_{\nConsts}}(\point_{k_{\nConsts}})}\det(\cmat^{k_{1},\dotsc,k_{\nConsts}})^{2}.
\end{flalign}
In a similar fashion we can express the numerator as the determinant of a matrix product between the matrices
$\cmat_{\point} = \left[a^{1}/\theta_{1}''(\point_{1}),\dotsc,a^{\nDims}/\theta_{\nDims}''(\point_{\nDims})\right]$, and $B_{\point,i}^{\top} = \left[a_{1},\dotsc,\nabla \obj(\point),\dotsc,a_{\nConsts}\right]$. 
Then, applying the Cauchy-Binet formula again, we obtain
\begin{flalign}
y_{i}(\point)
	= \frac{\sum_{1\leq k_{1}<\dotsm< k_{\nConsts}\leq n}
			\theta_{k_{1}}''(\point_{k_{1}})^{-1} \dotsm \theta''_{k_{\nConsts}}(\point_{k_{\nConsts}})^{-1}
			\det(\cmat^{k_{1},\dotsc,k_{\nConsts}})\det(B_{\point,i}^{k_{1},\dotsc,k_{\nConsts}})}
		{\sum_{1\leq k_{1}<\dotsm< k_{\nConsts}\leq n}
			\theta_{k_{1}}''(\point_{k_{1}})^{-1} \dotsm \theta''_{k_{\nConsts}}(\point_{k_{\nConsts}})^{-1}
			\det(\cmat^{k_{1},\dotsc,k_{\nConsts}})^{2}}.
\end{flalign}
Since $\cmat$ has full rank, the above is well defined.

\revise{To establish an upper bound for} this last expression, we use the simple inequality
\begin{flalign}
\frac{\abs{\sum_{i=1}^{\nDims}b_{i}}}{\sum_{i=1}^{\nDims}\sigma_{i}}\leq\max_{i}\frac{\abs{b_{i}}}{\sigma_{i}}
\end{flalign}
for $\sigma_{i}>0$. 
We then get
\begin{flalign}
\abs{y_{i}(\point)}
	\leq\max\abs*{\frac{\det(B_{\point,i}^{k_{1},\dotsc,k_{\nConsts}})}{\det(\cmat^{k_{1},\dotsc,k_{\nConsts}})}}
	=: \omega_{i}(\point),
\end{flalign}
where the maximum is taken over all tuples $1\leq k_{1}< \dotsm< k_{\nConsts}\leq n$ for which the denominator in the above expression does not vanish (which, again, is possible thanks to $\cmat$ being full rank). 
\typo{By \cref{asm:basic}\itemref{itm:bounded}, we have $K_{\start} \equiv \sup\setdef{\norm{\nabla\obj}_{\infty}}{\point\in\intpoints,\obj(\point)\leq \obj(\point^{0})} < \infty$ so}
$\omega_{i}(\point)$ is bounded in norm for all $i$ and all $\point\in\points$ and
\begin{flalign}
\norm{r(\point)}_{\infty}
	&\leq \norm{\nabla \obj(\point)}_{\infty} + \norm{\cmat^{\top}y(\point)}_{\infty}
	\leq \typo{K_{\start}} + \norm{\cmat}_{\ast}\norm{\omega(\point)}_{\infty},
\end{flalign}
where $\norm{\cmat}_{\ast} = \max_{1\leq i\leq n} \abs{\sum_{j=1}^{\nConsts}a_{ji}}$. 
This gives $\step_{0}(\point) \geq \eps / (K_{\start}+\norm{\cmat}_{\ast}\norm{\omega(\point)}) \geq \eps/K_{\start}$,
\typo{\ie $\inf\setdef{\step_{0}(\point)}{\point\in\intpoints,\obj(\point)\leq \obj(\point^{0})} > 0$}, as claimed.
\end{proof}

Our next result shows that
the Armijo step-size rule \eqref{eq:Armijo} terminates after finitely many iterations.

\begin{lemma}
\label{lem:Armijo}
Suppose that $\vecfield(\point)\neq0$, \ie $\point$ is not a \ac{KKT} point of $\obj$.
Then:
\begin{enumerate}
\item
The process \eqref{eq:Armijo} is well-defined at $\point$.
\item
$\step(\point)\geq\min\{2(1-\armijo)\beta\shrink/L,\underline\step(\point)\}$.
\end{enumerate}
\end{lemma}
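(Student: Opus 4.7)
The plan is to chain together two ingredients that have already been prepared earlier in \cref{sec:step}: the descent inequality applied along the search direction $\vecfield(\point)$, and the strong-convexity lower bound $\norm{\tvec}_{\point}^{2} \geq \beta\norm{\tvec}_{2}^{2}$ coming from \cref{def:kernel}\itemref{itm:strong}. Together they will show that the Armijo inequality \eqref{eq:Armijo} is automatically satisfied whenever the trial step-size is at most $2(1-\armijo)\beta/L$.

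More concretely, for any feasible trial step $\lambda\in(0,\underline\step(\point)]$, the descent inequality \eqref{eq:descent} applied to $\pointalt = \point + \lambda\vecfield(\point)$ together with the angle identity \eqref{eq:angle} gives
\begin{equation*}
\obj(\point + \lambda\vecfield(\point)) - \obj(\point)
	\leq -\lambda \norm{\vecfield(\point)}_{\point}^{2}
	+ \tfrac{\lambda^{2} L}{2} \norm{\vecfield(\point)}_{2}^{2}.
\end{equation*}
To enforce the sufficient-decrease condition \eqref{eq:sufficient}, it suffices to have
\begin{equation*}
(1-\armijo)\norm{\vecfield(\point)}_{\point}^{2}
	\geq \tfrac{\lambda L}{2} \norm{\vecfield(\point)}_{2}^{2},
\end{equation*}
and by $\norm{\vecfield(\point)}_{\point}^{2}\geq \beta \norm{\vecfield(\point)}_{2}^{2}$ (which uses $\vecfield(\point)\neq 0$), this is guaranteed as soon as $\lambda \leq 2(1-\armijo)\beta/L$. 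This already proves well-definedness: since the backtracking sequence $\shrink^{\ell}\underline\step(\point)$ decreases geometrically to $0$, some $\ell$ will drop below the threshold $2(1-\armijo)\beta/L$, at which point \eqref{eq:Armijo} holds.

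For the quantitative lower bound, let $\ell\geq0$ be the first integer accepted by the Armijo rule, so $\step(\point) = \shrink^{\ell}\underline\step(\point)$. If $\ell=0$ the bound is immediate, so assume $\ell\geq1$. Since $\ell-1$ was rejected, the preceding analysis forces $\shrink^{\ell-1}\underline\step(\point) > 2(1-\armijo)\beta/L$ (otherwise the previous iterate would have satisfied \eqref{eq:Armijo}). Multiplying by $\shrink$ yields $\step(\point) > 2(1-\armijo)\beta\shrink/L$, which combined with the $\ell=0$ case gives
\begin{equation*}
\step(\point)
	\geq \min\{\underline\step(\point),\,2(1-\armijo)\beta\shrink/L\},
\end{equation*}
as claimed.

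I do not expect any serious obstacle here: the computation is fully standard for backtracking line search, and all of the heavy lifting (boundedness of $\underline\step(\point)$, strong convexity of the kernel, and the Lipschitz constant $L$ of $\nabla\obj$) is already available. The only mild subtlety to keep track of is that the argument needs $\vecfield(\point)\neq 0$ to divide by $\norm{\vecfield(\point)}_{2}^{2}$, which is exactly the hypothesis that $\point$ is not a \ac{KKT} point via \cref{lem:critical}.
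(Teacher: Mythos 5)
Your proof is correct, and it streamlines the paper's argument in a small but genuine way. For the quantitative bound in part~2, you and the paper do the same computation: combine the descent inequality \eqref{eq:descent} with the angle identity \eqref{eq:angle} and the kernel bound $\norm{\tvec}_{\point}^{2}\geq\beta\norm{\tvec}_{2}^{2}$ to show that any rejected trial step must exceed $2(1-\armijo)\beta/L$, then multiply by $\shrink$. Where you diverge is part~1: the paper establishes well-definedness by a separate contradiction argument \textendash\ assuming the backtracking never terminates, applying the mean value theorem to get intermediate points $\xi^{\ell}\to\point$, and passing to the limit to force $\vecfield(\point)=0$ \textendash\ whereas you observe directly that every trial step $\lambda\leq 2(1-\armijo)\beta/L$ already satisfies \eqref{eq:Armijo}, so the geometrically shrinking sequence $\shrink^{\ell}\underline\step(\point)$ must be accepted once it drops below that threshold. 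Your route is more elementary (no mean value theorem, no limiting argument), it derives both claims from a single computation, and as a bonus it yields an explicit upper bound on the number of backtracking steps, namely the first $\ell$ with $\shrink^{\ell}\underline\step(\point)\leq 2(1-\armijo)\beta/L$; the paper's contradiction argument does not quantify this. The one point worth stating explicitly (you handle it implicitly) is that the trial points $\point+\lambda\vecfield(\point)$ remain in $\points$ for $\lambda\leq\underline\step(\point)\leq\step_{0}(\point)$, since \eqref{eq:descent} is only asserted on $\points$; this is guaranteed by the construction of $\step_{0}(\point)$ in \eqref{eq:alpha0} and is the same implicit step taken in the paper's proof.
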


Our proof builds on a classical line of reasoning as in \cite{AusTeb06}, but the algorithm's non-Euclidean nature necessitates some extra care:

\begin{proof}[Proof of \cref{lem:Armijo}]
Suppose that the Armijo backtracking process carries on without terminating at $\point\in\intpoints$.
Then, setting $\new\point(\lambda) = \point + \lambda \vecfield(\point)$ for all $\lambda>0$, and writing $\step \equiv \step(\point)$ and $\underline\step \equiv \underline\step(\point) = \min\{\step_{0}(\point),2\beta/L\}$ for concision, we get
\begin{flalign}
\obj(\new\point(\shrink^{\ell}\underline\step)) - \obj(\point)
	> \armijo\nabla\obj(\point)^{\top}(\new\point(\shrink^{\ell}\underline\step) - \point)
\end{flalign}
for all $\ell\in\N$.
Then, by the mean value theorem, there exists $\xi^{\ell}\in(\point,\new\point(\shrink^{\ell}\underline\step))$ such that 
\begin{flalign}
&\nabla\obj(\xi_{j}^{\typo{\ell}})^{\top} (\new\point(\shrink^{\ell}\underline\step) - \point)
	= \obj(\new\point(\shrink^{\ell}\underline\step)) - \obj(\point)
	> \armijo\nabla\obj(\point)^{\top} (\new\point(\shrink^{\ell}\underline\step) - \point).
\end{flalign}
Clearly, we also have $\xi^{\ell}\to \point$ as $\ell\to\infty$.
Hence, passing to the limit and recalling that $\armijo\in(0,1)$, we get 
\begin{flalign}
-\norm{\vecfield(\point)}_{\point}^{2}
	\geq -c\norm{\vecfield(\point)}_{\point}^{2}
	\iff \vecfield(\point)=0
	\iff \nabla\obj(\point)\in\scrA_{0}^{\bot},
\end{flalign}
a contradiction.

For our second claim, suppose that the Armijo criterion \eqref{eq:Armijo} is first satisfied at $\point$ after $\ell\geq1$ steps, \ie $\step/\shrink = \shrink^{\ell-1}\underline\step$.
By assumption, this means that we don't yet have sufficient decrease at the $(\ell-1)$-th step of the backtracking process, \ie
\begin{equation}
\label{eq:bound1}
\obj(\new\point(\step/\shrink))-\obj(\point)
	> \armijo\nabla\obj(\point)^{\top}(\new\point(\step/\shrink) - \point).
\end{equation}
Since $\nabla\obj$ is $L$-Lipschitz continuous relative to $\norm{\cdot}_{2}$, the descent inequality \eqref{eq:descent} for an arbitrary step-size $\lambda > 0$ becomes
\begin{flalign}
\obj(\new\point(\lambda))-\obj(\point)
	&\leq -\lambda\norm{\vecfield(\point)}_{\point}^{2}
	+ \frac{\lambda^{2}L}{2}\norm{\vecfield(\point)}_{2}^{2}
\end{flalign}
Thus, since $\norm{z}_{\point}^{2} = z^{\top} \hmat(\point)z \geq \lambda_{\min}(\hmat(z)) \norm{z}_{2}^{2} \geq \beta\norm{z}_{2}^{2}$ for all $z\in\R^{\nDims}$, we get 
\begin{flalign}
\obj(\new\point(\lambda))-\obj(\point)
	&\leq -\lambda\norm{\vecfield(\point)}_{\point}^{2}
	+ \frac{\lambda^{2}L}{2\beta}\norm{\vecfield(\point)}_{\point}^{2}
	= -\lambda \parens*{1 - \frac{\lambda L}{2\beta}} \norm{\vecfield(\point)}_{\point}^{2}
	\notag\\
	&= \parens*{1 - \frac{\lambda L}{2\beta}} \, \nabla\obj(\point)^{\top} (\new\point(\lambda) - \point),
\end{flalign}
where we used the angle condition \eqref{eq:angle} and the definition of $\new\point(\lambda)$.
Hence, setting $\lambda = \step/\shrink$, we get
\begin{equation}
\label{eq:bound2}
\obj(\new\point(\step/\shrink))-\obj(\point)
	\leq \left(1-\frac{L\step}{2\beta\shrink}\right) \nabla\obj(\point)^{\top}(\new\point(\step/\shrink) - \point)
\end{equation}
which, combined with \eqref{eq:bound1}, implies that $1 - \step L/(2\beta\shrink)  \leq \armijo$, \ie $\step \geq 2\beta\shrink (1-\armijo)/L$.

On the other hand, if the Armijo criterion \eqref{eq:Armijo} is already satisfied at $\point$ with step-size $\underline\step$ (\ie after $\ell=0$ shrinkage steps), we will have $\step = \underline\step$.
Thus, combining all of the above, we get $\step \geq \min\{\underline\step,2(1-\armijo)\beta\shrink/L\}$, as claimed.
\end{proof}

\typo{We are finally in a position to show that the algorithm's step-size is non-vanishing in the limit:

\begin{lemma}
\label{lem:step}
The algorithm's step-size sequence $\step^{\run}\equiv\step(\point^{\run})$ has $\inf_{\run}\step^{\run}>0$.
\end{lemma}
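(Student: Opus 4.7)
The plan is to deduce this as a direct consequence of the two preceding lemmas, with the key observation being that all iterates remain in a sublevel set where the bootstrap step-size is uniformly bounded from below.

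First, I would argue that $\obj(\point^{\run}) \leq \obj(\point^{0})$ for every $\run$. Indeed, at every iteration the backtracking procedure is driven to satisfy the sufficient decrease inequality \eqref{eq:sufficient}, which gives $\obj(\point^{\run+1}) \leq \obj(\point^{\run}) - \armijo \step^{\run} \norm{\vecfield(\point^{\run})}_{\point^{\run}}^{2} \leq \obj(\point^{\run})$. By a straightforward induction this yields $\point^{\run} \in \{\point \in \intpoints : \obj(\point) \leq \obj(\point^{0})\}$ for all $\run$, which is exactly the set over which \cref{lem:lower} provides its uniform lower bound.

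Next I would apply \cref{lem:lower} to extract a constant $c_{0} > 0$ such that $\underline\step(\point^{\run}) \geq c_{0}$ for every $\run$. For those iterations where $\vecfield(\point^{\run}) \neq 0$, \cref{lem:Armijo} then gives
\begin{equation}
\step^{\run} \geq \min\left\{\frac{2(1-\armijo)\beta\shrink}{L},\, \underline\step(\point^{\run})\right\} \geq \min\left\{\frac{2(1-\armijo)\beta\shrink}{L},\, c_{0}\right\} > 0.
\end{equation}
The remaining case, $\vecfield(\point^{\run}) = 0$, corresponds to $\point^{\run}$ being a \ac{KKT} point; by \cref{lem:critical} the algorithm then remains stationary, and the backtracking condition \eqref{eq:Armijo} is satisfied trivially at $\step^{\run} = \underline\step(\point^{\run}) \geq c_{0}$, so the same lower bound applies (or, equivalently, such iterations may simply be dropped from the infimum without loss).

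The main obstacle is simply ensuring the two pieces fit together cleanly --- in particular, justifying that the iterates stay inside the sublevel set on which \cref{lem:lower} applies (so that the bootstrap lower bound is genuinely uniform in $\run$), and verifying that the fringe case $\vecfield(\point^{\run}) = 0$ does not spoil the infimum. Once these are in place, taking the infimum over $\run$ yields $\inf_{\run} \step^{\run} \geq \min\{2(1-\armijo)\beta\shrink/L, c_{0}\} > 0$, which is the desired conclusion.
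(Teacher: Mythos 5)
Your proposal is correct and follows essentially the same route as the paper: show that the Armijo rule keeps all iterates in the sublevel set $[\obj\leq\obj(\point^{0})]$, invoke \cref{lem:lower} for a uniform lower bound on $\underline\step(\point^{\run})$, and conclude via \cref{lem:Armijo}. Your explicit handling of the degenerate case $\vecfield(\point^{\run})=0$ is a minor point the paper leaves implicit, but it does not change the argument.
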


\begin{proof}[Proof of \cref{lem:step}]
\typo{Since $\obj(\point^{\run})$ is weakly decreasing (by the Armijo rule \eqref{eq:Armijo}), it follows that $\point^{\run} \in [\obj \leq \obj(\point^{0})]$ for all $\run$.
\cref{lem:lower} further guarantees that $\inf\setdef{\underline{\step}(\point)}{\points\in\intpoints,\obj(\point)\leq\obj(\point^{0})} > 0$, so our claim follows from \cref{lem:Armijo}.}
\end{proof}
} 

\subsection{Iterate analysis}
\label{sec:iterate-analysis}

We now turn to the long-run behavior of the iterates $\point^{\run}$ generated by \eqref{eq:HBA}.
The arguments are partly based on general facts on descent methods and extend the analysis of \cite{TseBomSch11} to a considerably richer algorithmic framework.
We start with a simple observation:

\begin{lemma}
\label{lem:iteratebounded}
Let $\point^{0}\in\intpoints$ be an initial condition satisfying \cref{asm:basic}\itemref{itm:bounded}.
Then the sequence of iterates $\point^{\run}$ of \eqref{eq:HBA} is bounded.
\end{lemma}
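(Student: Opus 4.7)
The plan is to exploit the sufficient-decrease property built into the Armijo step-size rule together with the feasibility-preserving choice $\underline\step(\point)\le\step_{0}(\point)$. The key observation is that \eqref{eq:HBA} is a monotone descent method, so the iterates are trapped in the initial sublevel set $[\obj\le\obj(\point^{0})]$, and \cref{asm:basic}\itemref{itm:bounded} then finishes the job.

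Concretely, I would proceed in three short steps. \emph{First}, I would verify that the iterates stay feasible, i.e.\ $\point^{\run}\in\intpoints$ for all $\run$. By induction: if $\point^{\run}\in\intpoints$, then the coordinate-wise analysis of \cref{sec:step} shows that any step-size $\step<\step_{0}(\point^{\run})$ keeps all components strictly positive, while the search direction $\vecfield(\point^{\run})\in\scrA_{0}=\ker\cmat$ preserves the equality constraints $\cmat\point=\cvec$. Since the Armijo procedure only ever shrinks $\underline\step(\point^{\run})\le\step_{0}(\point^{\run})$, the next iterate $\point^{\run+1}$ lies in $\orthant\cap\scrA=\intpoints$.

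\emph{Second}, I would invoke the Armijo acceptance criterion \eqref{eq:Armijo} at each step:
\begin{equation*}
\obj(\point^{\run+1})-\obj(\point^{\run})\le-\armijo\,\step^{\run}\,\norm{\vecfield(\point^{\run})}_{\point^{\run}}^{2}\le 0.
\end{equation*}
Hence $\obj(\point^{\run})$ is non-increasing in $\run$, and in particular $\obj(\point^{\run})\le\obj(\point^{0})$ for every $\run$. Combined with feasibility, this yields $\point^{\run}\in[\obj\le\obj(\point^{0})]$ for all $\run$.

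\emph{Third}, \cref{asm:basic}\itemref{itm:bounded} asserts that the sublevel set $[\obj\le\obj(\point^{0})]$ is bounded, so $\{\point^{\run}\}_{\run\in\N}$ is bounded as well. There is no real obstacle here; the only thing to be careful about is the double-layer check that the Armijo backtracking terminates (so the iterate $\point^{\run+1}$ is actually well-defined and inherits the sufficient-decrease inequality), but this is precisely what \cref{lem:Armijo} and \cref{lem:step} ensure. The lemma is thus an immediate corollary of the design of \eqref{eq:HBA} rather than requiring any new estimate.
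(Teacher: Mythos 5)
Your argument is correct and follows essentially the same route as the paper: the Armijo acceptance criterion forces $\obj(\point^{\run})$ to be non-increasing, so the iterates remain in the sublevel set $[\obj\le\obj(\point^{0})]$, which is bounded by \cref{asm:basic}\itemref{itm:bounded}. The paper's proof is just a terser version of your second and third steps (it leaves the feasibility check implicit), so there is nothing to add.
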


\begin{proof}
By the definition of \eqref{eq:HBA}, we have 
\begin{flalign}
\obj(\point^{k+1})
	\leq \obj(\point^{\run}) - \armijo\step^{\run}\norm{\vecfield(\point^{\run})}_{\point^{\run}}^{2},
\end{flalign}
showing that $\obj(\point^{\run})$ is non-increasing.
Our claim then follows trivially.
\end{proof}


The next result is actually a standard result for descent methods \textendash\ see \eg \cite{ABS13}:

\begin{lemma}
\label{lem:LimitSet}
With notation as in \cref{thm:main}, we have:
\begin{enumerate}
[\indent\upshape(\itshape a\hspace*{.5pt}\upshape)]
\item
The limit set $\scrL$ of \eqref{eq:HBA} is nonempty, compact and connected.
\item
$\lim_{k\to\infty} \dist(\point^{\run},\scrL)=0$.
\item
The objective function $\obj$ is constant on $\scrL$.
\end{enumerate}
\end{lemma}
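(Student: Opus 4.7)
The plan is to assemble the statement from three ingredients: (i) $\obj$ is continuous on the compact sublevel set that contains the iterates; (ii) the sequence $\{\obj(\point^{\run})\}$ converges; and (iii) successive iterates become arbitrarily close, i.e.\ $\norm{\point^{\run+1}-\point^{\run}}_{2}\to 0$. Once these are in hand, parts (a)--(c) of \cref{lem:LimitSet} follow by a classical argument for bounded, ``slowly varying'' sequences.

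First I would observe that, by \cref{lem:iteratebounded}, the iterates $\point^{\run}$ remain in the compact sublevel set $[\obj\leq\obj(\point^{0})]$, so $\scrL$ is automatically nonempty (by Bolzano--Weierstrass) and bounded. Since $\scrL$ is the set of cluster points of a sequence, it is closed in $\R^{\nDims}$, hence compact. The sequence $\obj(\point^{\run})$ is non-increasing by the Armijo rule \eqref{eq:Armijo} and bounded below (since $\obj$ attains its minimum on the compact sublevel set), so it converges to some $\obj^{\star}\in\R$; continuity of $\obj$ on the sublevel set then forces $\obj(\hat\point)=\obj^{\star}$ for every $\hat\point\in\scrL$, giving part (c).

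The key step is to show $\norm{\point^{\run+1}-\point^{\run}}_{2}\to0$. From the Armijo inequality and the bound $\norm{\vecfield(\point)}_{\point}^{2}\geq\beta\norm{\vecfield(\point)}_{2}^{2}$ we obtain
\begin{equation*}
\obj(\point^{\run})-\obj(\point^{\run+1})
  \geq \armijo\,\step^{\run}\norm{\vecfield(\point^{\run})}_{\point^{\run}}^{2}
  \geq \armijo\,\beta\,\step^{\run}\norm{\vecfield(\point^{\run})}_{2}^{2}.
\end{equation*}
Summing over $\run$ and using that $\obj(\point^{\run})$ is bounded below gives $\sum_{\run}\step^{\run}\norm{\vecfield(\point^{\run})}_{2}^{2}<\infty$. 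By \cref{lem:step}, $\inf_{\run}\step^{\run}>0$, so $\norm{\vecfield(\point^{\run})}_{2}\to0$, and hence $\norm{\point^{\run+1}-\point^{\run}}_{2}=\step^{\run}\norm{\vecfield(\point^{\run})}_{2}\to0$ by \cref{lem:upper}.

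The remaining properties now follow from the standard Ostrowski-type result: if a bounded sequence satisfies $\norm{\point^{\run+1}-\point^{\run}}_{2}\to0$, then its set of limit points is connected, and $\dist(\point^{\run},\scrL)\to0$. Briefly, if $\scrL$ were disconnected, it would split into two disjoint nonempty compact pieces $\scrL_{1},\scrL_{2}$ at positive distance $2\delta>0$; the iterates would have to visit arbitrarily small neighborhoods of each piece infinitely often, and by $\norm{\point^{\run+1}-\point^{\run}}_{2}\to 0$ they would eventually take only jumps of length $<\delta$, forcing a subsequence to land in the ``gap'' between $\scrL_{1}$ and $\scrL_{2}$ and producing a limit point outside $\scrL_{1}\cup\scrL_{2}=\scrL$, a contradiction. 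Similarly, $\dist(\point^{\run},\scrL)\not\to0$ would produce a subsequence bounded away from $\scrL$, whose limit (extracted via compactness of the sublevel set) would belong to $\scrL$, another contradiction. The only mildly delicate ingredient here is the connectedness argument, but it is a textbook consequence of $\norm{\point^{\run+1}-\point^{\run}}_{2}\to0$ combined with boundedness, so no new technical work is required beyond the step already established.
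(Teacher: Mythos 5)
Your proof is correct and is precisely the standard descent-method argument that the paper invokes by citation (\cite{ABS13}) without writing it out: nonemptiness and compactness from boundedness of the iterates, part (b) and constancy of $\obj$ on $\scrL$ from monotone convergence of $\obj(\point^{\run})$ plus continuity, and connectedness via the Ostrowski-type gap argument. The one point worth flagging is that the vanishing of $\norm{\point^{\run+1}-\point^{\run}}_{2}$ \textemdash\ which the connectedness claim genuinely requires \textemdash\ is only established by the paper in the \emph{subsequent} \cref{lem:changezero}; your inline derivation of it from the Armijo inequality together with \cref{lem:step,lem:upper} is sound and arguably places the ingredients in a more natural order.
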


With this lemma at hand, we proceed to show that the iterate change vanishes:

\begin{lemma}
\label{lem:changezero}
With notation as in \cref{thm:main}, we have $\lim_{k\to\infty}(\point^{k+1} - \point^{\run})=0$.
\end{lemma}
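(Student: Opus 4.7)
The plan is to combine the sufficient decrease guaranteed by the Armijo rule with the two-sided bounds on the step-size established in \cref{lem:upper,lem:step}. Since $\point^{k+1}-\point^{\run} = \step^{\run}\,\vecfield(\point^{\run})$ by construction of \eqref{eq:HBA}, it will be enough to show $\norm{\vecfield(\point^{\run})}_{2}\to 0$.

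First I would note that, by \cref{lem:iteratebounded}, the sequence $\point^{\run}$ remains in the compact sublevel set $[\obj\leq\obj(\point^{0})]$, so the sequence $\obj(\point^{\run})$ is non-increasing (by the Armijo rule) and bounded from below. Consequently, $\obj(\point^{\run})$ converges to some finite limit, and telescoping the sufficient-decrease inequality
\begin{equation*}
\obj(\point^{\run+1})
    \leq \obj(\point^{\run})
    - \armijo\,\step^{\run}\,\norm{\vecfield(\point^{\run})}_{\point^{\run}}^{2}
\end{equation*}
yields the summability condition $\sum_{\run=0}^{\infty}\step^{\run}\norm{\vecfield(\point^{\run})}_{\point^{\run}}^{2}<\infty$. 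In particular, $\step^{\run}\norm{\vecfield(\point^{\run})}_{\point^{\run}}^{2}\to 0$.

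Next, I would invoke \cref{lem:step}, which gives $\inf_{\run}\step^{\run}>0$; dividing out the step-size from the previous vanishing product yields $\norm{\vecfield(\point^{\run})}_{\point^{\run}}^{2}\to 0$. Using the uniform lower bound $\norm{\tvec}_{\point}^{2}\geq \beta\norm{\tvec}_{2}^{2}$ that follows from \cref{def:kernel}\itemref{itm:strong}, I can transfer this to the Euclidean norm: $\norm{\vecfield(\point^{\run})}_{2}\to 0$.

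Finally, combining with the upper bound $\sup_{\run}\step^{\run}<\infty$ from \cref{lem:upper}, I would conclude
\begin{equation*}
\norm{\point^{\run+1}-\point^{\run}}_{2}
    = \step^{\run}\,\norm{\vecfield(\point^{\run})}_{2}
    \leq \bigl(\sup_{\run}\step^{\run}\bigr)\,\norm{\vecfield(\point^{\run})}_{2}
    \xrightarrow{\run\to\infty} 0,
\end{equation*}
as claimed. There is no real obstacle once the step-size bounds of \cref{lem:upper,lem:step} are in hand \textemdash{} the whole argument is essentially a textbook descent-lemma-plus-summability argument, and the only subtlety is making sure the metric-dependent norm $\norm{\cdot}_{\point^{\run}}$ can be controlled uniformly from below by $\norm{\cdot}_{2}$, which is exactly what the strong-convexity condition on the kernel $\theta$ provides.
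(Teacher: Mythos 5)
Your proposal is correct and follows essentially the same route as the paper: both arguments extract $\step^{\run}\norm{\vecfield(\point^{\run})}_{\point^{\run}}^{2}\to 0$ from the Armijo sufficient-decrease inequality together with the monotonicity and boundedness of $\obj(\point^{\run})$, then divide out the step-size using \cref{lem:step} (the paper invokes \cref{lem:Armijo} directly to the same effect) and pass to the Euclidean norm via $\norm{\tvec}_{\point}^{2}\geq\beta\norm{\tvec}_{2}^{2}$. Your telescoping/summability phrasing and the paper's subsequence phrasing are cosmetic variants of the same textbook descent argument, so there is nothing to flag.
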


\begin{proof}
Observe that for all $k=0,1,\dotsc$, we have
\begin{flalign}
\norm{\vecfield(\point^{\run})}_{\point^{\run}}^{2}
	&= \frac{1}{(\step^{\run})^{2}}\norm{\hmat(\point^{\run})^{1/2} (\point^{k+1} - \point^{\run})}^{2}
	\geq\frac{\beta}{(\underline\step^{\run})^{2}}\norm{\point^{k+1} - \point^{\run}}^{2}.
\end{flalign}
Choose a convergent subsequence $\{\point^{\run}\}_{k\in\scrK}$, so that $\lim_{k\to\infty,k\in\scrK} \point^{\run}=\sol$.
Since $\obj(\point^{\run})$ is non-increasing, we readily get $\obj(\point^{\run}) \downarrow \obj(\sol) \leq \obj(\point^{0})$,
and also $\lim_{k\to\infty,k\in\scrK} [\obj(\point^{k+1})-\obj(\point^{\run})] = 0$.
Then, from \eqref{eq:Armijo}, it follows that
\(
\armijo\step^{\run} \norm{\vecfield(\point^{\run})}_{\point^{\run}}^{2}
	\leq \obj(\point^{\run}) - \obj(\point^{k+1})
\)
and hence, 
\(
\lim_{k\to\infty,k\in\scrK}\step^{\run} \norm{\vecfield(\point^{\run})}_{\point^{\run}}^{2} = 0.
\)
We thus get
\(
\limsup_{k\to\infty}\step^{\run}\norm{\vecfield(\point^{\run})}_{\point^{\run}}^{2}
	= \liminf_{k\to\infty}\step^{\run}\norm{\vecfield(\point^{\run})}_{\point^{\run}}^{2}
	=0.
\)
In turn, \cref{lem:Armijo} implies that $\inf_{k\in\N}\step^{\run}>0$, so $\lim_{k\to\infty}\norm{\vecfield(\point^{\run})}_{\point^{\run}}=0$.
\end{proof}


\begin{lemma}
\label{lem:KKTLimit}
$\scrL\subset\Lambda$.
\end{lemma}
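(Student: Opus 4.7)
The plan is to verify the two conditions defining $\Lambda$ separately: for every $\hat\point \in \scrL$ I need to establish (i) $\lim_{\run\to\infty} \obj(\point^{\run}) = \obj(\hat\point)$, and (ii) $\diag(\hat\point)\,r(\hat\point) = 0$.

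Part (i) is the easier half. The sequence $\obj(\point^{\run})$ is nonincreasing by the sufficient decrease property \eqref{eq:sufficient} built into \eqref{eq:Armijo}, and it is bounded below on the compact sublevel set $\setdef{\point\in\points}{\obj(\point)\leq\obj(\point^{0})}$ furnished by \cref{asm:basic}\itemref{itm:bounded}. Hence $\obj(\point^{\run})$ converges, and continuity of $\obj$ on $\points$ forces the common limit of any subsequence $\point^{\run_{j}}\to\hat\point$ to equal $\obj(\hat\point)$.

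For part (ii), the key identity is $r_{i}(\point) = -\theta_{i}''(\point_{i})\,\vecfield_{i}(\point)$, obtained from \eqref{eq:reduced} and the diagonal structure of $\hmat(\point)$. The proof of \cref{lem:changezero} actually delivers more than its statement: it shows $\|\vecfield(\point^{\run})\|_{\point^{\run}}^{2} = \sum_{k}\theta_{k}''(\point_{k}^{\run})\,\vecfield_{k}(\point^{\run})^{2} \to 0$. Fixing a subsequence $\point^{\run_{j}}\to\hat\point$ and splitting on coordinates, I would argue as follows. If $\hat\point_{i}>0$, continuity of $\theta_{i}''$ gives $\theta_{i}''(\point_{i}^{\run_{j}})\to\theta_{i}''(\hat\point_{i})\in(0,\infty)$, so the nonnegative bound $\theta_{i}''(\point_{i}^{\run_{j}})\,\vecfield_{i}(\point^{\run_{j}})^{2} \leq \|\vecfield(\point^{\run_{j}})\|_{\point^{\run_{j}}}^{2}$ forces $\vecfield_{i}(\point^{\run_{j}})\to 0$ and thus $r_{i}(\point^{\run_{j}})\to 0$. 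If instead $\hat\point_{i}=0$, then $\point_{i}^{\run_{j}}\,r_{i}(\point^{\run_{j}})\to 0$ because $r(\point)$ is bounded on the sublevel set, as shown en route to \cref{lem:lower}. In either case the product $\point_{i}^{\run_{j}}\,r_{i}(\point^{\run_{j}})$ vanishes in the limit, giving $\diag(\hat\point)\,r(\hat\point)=0$.

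The most delicate point is that $r(\hat\point)$ is not \emph{a priori} well-defined when $\hat\point$ lies on the boundary, because $y(\point)=(\cmat\hmat(\point)^{-1}\cmat^{\top})^{-1}\cmat\hmat(\point)^{-1}\nabla\obj(\point)$ need not extend continuously to $\bd(\clorthant)$. I would circumvent this by first extracting a further subsequence along which $y(\point^{\run_{j}})$ converges to some $\hat y\in\R^{\nConsts}$ -- possible because $y$ was shown to be bounded on the sublevel set inside the proof of \cref{lem:lower} -- and then defining $r(\hat\point) := \nabla\obj(\hat\point)-\cmat^{\top}\hat y$; the coordinate-wise argument above applies to this limit unchanged. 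This convention also matches the content of part (b) of \cref{thm:main}, where $r_{i}(\hat\point)=0$ for $\hat\point_{i}>0$ is understood in precisely this sense.
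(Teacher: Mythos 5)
Your proof is correct and follows essentially the same route as the paper's: both rest on the fact (extracted from the proof of \cref{lem:changezero}) that $\norm{\vecfield(\point^{\run})}_{\point^{\run}}^{2}=\norm{\hmat(\point^{\run})^{-1/2}r(\point^{\run})}_{2}^{2}\to 0$, followed by a coordinate-wise split according to whether $\hat\point_{i}>0$ or $\hat\point_{i}=0$, and a passage to the limit in $r_{i}(\point)=\pd_{i}\obj(\point)-(\cmat^{\top}y(\point))_{i}$. Your extraction of a convergent subsequence of the bounded dual variables $y(\point^{\run_{j}})$ to define $r(\hat\point)$ at boundary points is in fact a slightly more careful rendering of the paper's terser step ``set $\dsol=\lim_{k}y(\point^{\run})$ by continuity.''
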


\begin{proof}
Let $\{\point^{\run}\}_{k\in\N}$ be a convergent subsequence (we omit the relabeling).
Since $\vecfield(\point)=-\hmat(\point)^{-1}r(\point)$, we conclude from the above that 
\begin{flalign}
0
	&= \lim_{k\to\infty} \product{\hmat(\point^{\run})\vecfield(\point^{\run})}{\vecfield(\point^{\run})}
	= \lim_{k\to\infty} \norm{\hmat(\point^{\run})^{-1/2}r(\point^{\run})}^{2}.
\end{flalign}
Therefore, for all $i\in S$, we will have
\(
\lim_{k\to\infty} \abs{r_{i}(\point^{\run}) \theta''_{i}(\point^{\run}_{i})^{-1/2}}
	=0.
\)
Hence, if $i\in S_{\sol}$, we must have $\lim_{k\to\infty}r_{i}(\point^{\run})=r_{i}(\sol)=0$.
Now, for all $k$, the linear system 
\begin{equation}
\label{eq:rk}
(\nabla\obj(\point^{\run})-A^{\top}y)_{i}
	= r_{i}(\point^{\run}),
	\quad i\in S_{\sol},
\end{equation}
admits the solution $y^{\run}=y(\point^{\run})\in\R^{\nConsts}$.
Set $\dsol=y(\sol)=\lim_{k\to\infty}y(\point^{\run})$, by continuity.
Hence, passing to the limit in \cref{eq:rk} gives
\(
(\nabla\obj(\sol)-A^{\top}\dsol)_{i}
	= 0
\)
for all $i\in S_{\sol}$.
We thus conclude that
\(
\diag(\sol)\left(\nabla\obj(\sol) - A^{\top}\dsol\right)
	=0,
\)
\ie $\sol\in\Lambda$.
\end{proof}

\subsection{Proof of \cref{thm:main}}
We now combine all the above established preliminary facts, to prove the main results on the global convergence of \eqref{eq:HBA}.
Parts (a) and (b) of \cref{thm:main} follow from \cref{lem:iteratebounded,lem:KKTLimit}.
The remainder of this section is concerned with establishing claims \itemref{itm:cvx}\textendash\itemref{itm:strict} of \cref{thm:main}.
For this we have to show that $r(\sol)\geq 0$ for all $\sol\in\scrL$ holds under each of the conditions described in \cref{thm:main}.
The fact that $\sol$ is a \ac{KKT} point is then a consequence of \cref{lem:KKTLimit}, showing that also complementarity slackness holds.

\subsection*{Proof of \cref{thm:main}\itemref{itm:cvx}}
Assume that $\obj$ is convex.
Let $\sol\in\scrL$, and denote by $\bar{J} = \setdef{i\in S}{r_{i}(\sol)=0}$ and $\bar{J}^{c} = \setdef{i\in S}{r_{i}(\sol)\neq 0}$.
Moreover, define the set
\begin{equation}
\label{eq:Omega}
\Omega = \argmin\{\obj(\point): \point\in\points,\point_{\bar{J}^{c}}=0\}.
\end{equation}
Since $\obj$ is continuous and convex, the set $\Omega$ is closed and convex.
$\sol$ is a feasible point for the convex program \eqref{eq:Omega}, satisfying the \ac{KKT} condition $\diag(\sol)r(\sol)=0$ (\cref{lem:KKTLimit}).
Hence, $\Omega=\{\point\in\points: \obj(\point)=\obj(\sol),\point_{\bar{J}^{c}}=0\}$, and therefore $\obj$ is constant on $\Omega$.
By convexity, $\nabla\obj(\point)=\nabla\obj(\sol)$ for all $\point\in\Omega$.
We next prove that the reduced cost $r(\point)$ is constant on $\Omega$, and in fact must be non-negative, showing that $\sol\in\sols$.
\begin{lemma}
\label{lem:rconstant}
For all $\point\in\Omega$ we have $r(\point)=r(\sol)$.
\end{lemma}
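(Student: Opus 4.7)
The plan is to reduce the claim $r(\point) = r(\sol)$ to a condition on the dual multipliers $y(\cdot)$, and then exploit the normal equation defining $y(\cdot)$ together with the sparsity pattern that $\hmat(\point)^{-1}$ exhibits on $\Omega$.

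First, as noted just before the lemma, convexity of $\obj$ and constancy of $\obj$ on the convex set $\Omega$ imply $\nabla\obj(\point) = \nabla\obj(\sol)$ for every $\point\in\Omega$. Writing $r(\point) = \nabla\obj(\point) - \cmat^\top y(\point)$, this yields
\begin{equation*}
r(\point) - r(\sol) = -\cmat^\top\bigl(y(\point) - y(\sol)\bigr);
\end{equation*}
since $\cmat$ has full row rank, $\cmat^\top$ is injective, so the claim reduces to showing $y(\point) = y(\sol)$ for every $\point\in\Omega$.

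Next, I would exploit the normal equation $\cmat\hmat(\point)^{-1}r(\point) = 0$ which defines $y(\point)$. By the growth condition \cref{def:kernel}\itemref{itm:growth}, $1/\theta_i''(\point_i)\to 0$ as $\point_i\searrow 0$, so for $\point\in\Omega$ (where $\point_{\bar{J}^c}=0$ by definition) the diagonal matrix $\hmat(\point)^{-1}$ vanishes continuously on all rows and columns indexed by $\bar{J}^c$. The normal equation therefore collapses to its $\bar{J}$-block; substituting $\nabla\obj(\point)_{\bar{J}} = \cmat_{\bar{J}}^\top y(\sol)$ (which uses $r_{\bar{J}}(\sol)=0$ by definition of $\bar{J}$) gives
\begin{equation*}
\cmat_{\bar{J}}\,\hmat_{\bar{J}}(\point)^{-1}\,\cmat_{\bar{J}}^\top\bigl(y(\point)-y(\sol)\bigr) = 0.
\end{equation*}
Pairing with $y(\point)-y(\sol)$ and using positive semi-definiteness of the diagonal matrix $\hmat_{\bar{J}}(\point)^{-1}$ then yields $r_i(\point)=r_i(\sol)$ at every coordinate $i\in\bar{J}$ with $\point_i>0$.

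The delicate part \textendash\ and the main obstacle \textendash\ is to upgrade this partial identity to the full conclusion $y(\point)=y(\sol)$: the pairing argument is degenerate at indices $i\in\bar{J}$ with $\point_i=0$, and yields no direct information on coordinates in $\bar{J}^c$. To close this gap I would invoke the convex structure of $\Omega$: at a relative-interior point $\point^\circ\in\Omega$, every $i\in\bar{J}$ satisfies $\point^\circ_i>0$ (by standard convex-geometry arguments applied to the face $\Omega$), so the previous step identifies $\cmat_{\bar{J}}^\top(y(\point^\circ)-y(\sol))=0$ on the whole block $\bar{J}$, and the full row rank of $\cmat$ then forces $y(\point^\circ)=y(\sol)$. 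Finally, for arbitrary $\point\in\Omega$ I would connect $\point$ to $\point^\circ$ through a segment lying in $\Omega$ and pass to the limit using continuity of $y(\cdot)$ along this segment \textendash\ which stays in the relative interior of $\Omega$ except possibly at the endpoint $\point$ \textendash\ to conclude $y(\point)=y(\sol)$ throughout $\Omega$, completing the proof.
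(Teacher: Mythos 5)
Your reduction to $y(\point)=y(\sol)$ and your derivation of the $\bar J$-block normal equation $\cmat_{\bar J}\hmat_{\bar J}(\point)^{-1}\cmat_{\bar J}^{\top}\bigl(y(\point)-y(\sol)\bigr)=0$ are both correct, but the way you extract information from that equation breaks down. Passing to the quadratic form only yields $(a^{j})^{\top}(y(\point)-y(\sol))=0$ for indices $j\in\bar J$ with $\point_{j}>0$, and the patch you propose does not close the gap, for two reasons. First, it is not true that every coordinate in $\bar J$ is positive at a relative interior point of $\Omega$: relative interiority only guarantees $\point^{\circ}_{i}>0$ for those $i$ that are positive \emph{somewhere} on $\Omega$, and there may well be degenerate indices $i\in\bar J$ (with $\sol_{i}=r_{i}(\sol)=0$) that vanish identically on $\Omega$. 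Second, even granting positivity on all of $\bar J$, the implication $\cmat_{\bar J}^{\top}\delta=0\Rightarrow\delta=0$ requires the columns of $\cmat$ indexed by $\bar J$ to span $\R^{\nConsts}$; full row rank of the \emph{whole} matrix $\cmat$ gives no such thing. So the final step of your argument fails, and with it the conclusion for the coordinates in $\bar J^{c}$ and the degenerate coordinates in $\bar J$.

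The missing idea \textendash\ and the heart of the paper's proof \textendash\ is the full vector identity $\hmat(\point)^{-1}r(\sol)=0$ for $\point\in\Omega$: the $j$-th component is $r_{j}(\sol)/\theta_{j}''(\point_{j})$, which vanishes on $\bar J$ because $r_{j}(\sol)=0$ there, and on $\bar J^{c}$ because $\point_{j}=0$ forces $1/\theta_{j}''(\point_{j})=0$ there. The paper simply substitutes $\nabla\obj(\point)=\nabla\obj(\sol)=r(\sol)+\cmat^{\top}y(\sol)$ into $r(\point)=\bigl[\Id-\cmat^{\top}(\cmat\hmat(\point)^{-1}\cmat^{\top})^{-1}\cmat\hmat(\point)^{-1}\bigr]\nabla\obj(\point)$; the $\cmat^{\top}y(\sol)$ term is annihilated by the projector and the $r(\sol)$ correction term dies because of the identity above, giving $r(\point)=r(\sol)$ in all coordinates at once, with no positivity or spanning argument. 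Recast in your notation: since the $\bar J^{c}$ entries of $\hmat(\point)^{-1}$ vanish, your block equation is exactly $\cmat\hmat(\point)^{-1}\cmat^{\top}(y(\point)-y(\sol))=0$, and $\cmat\hmat(\point)^{-1}\cmat^{\top}$ is precisely the matrix whose invertibility underlies the definition \eqref{eq:dual} of $y(\point)$, so $y(\point)=y(\sol)$ follows immediately \textendash\ it is the pairing step that discards the information you need.
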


\begin{proof}
Let $\point\in \Lambda$ be arbitrary.
We have 
\begin{flalign}
r(\point)
	&= \nabla\obj(\point)-A^{\top}y(\point)
	\notag\\
	&= \nabla\obj(\sol)-A^{\top}(A\hmat(\point)^{-1}A^{\top})^{-1}A\hmat(\point)^{-1}\nabla\obj(\point)
	\notag\\
	&= [\Id-A^{\top}(A\hmat(\point)^{-1}A^{\top})^{-1}A\hmat(\point)^{-1}]\nabla\obj(\sol)
	\notag\\
	&= [\Id-A^{\top}(A\hmat(\point)^{-1}A^{\top})^{-1}A\hmat(\point)^{-1}](r(\sol)+A^{\top}y(\sol))
	\notag\\
	&= r(\sol)-A^{\top}(A\hmat(\point)^{-1}A^{\top})^{-1}A\hmat(\point)^{-1}r(\sol)
	\notag\\
	&= r(\sol).
\end{flalign}
The first line is the definition of $r(\point)$, the second line is the definition of $y(\point)$ and uses the constancy of the gradient mapping on $\Lambda$.
The third line is then again the definition of $r(\sol)$.
In the last line we have used the fact that $\hmat(\point)^{-1}r(\sol) = (r_{j}(\sol)/\theta''(\sol_{j}))_{j\in S}=0$, which holds because if $i\in\bar{J}^{c}$ then $1/\theta''_{i}(\sol_{i})=0$, and the dual variable is bounded.
\end{proof}

We next prove that all accumulation points of \eqref{eq:HBA} are contained in $\Omega$.
To that end, for fixed $\eta>0$, we define 
\begin{flalign}
\Omega_{\eta} = \scrL\cap\{\point\in\Rn: \dist(\point,\Omega)<\eta\}.
\end{flalign}
Observe that this set is non-empty since $\sol\in\Omega$.
We will use this set to localize the limit points of the trajectory $\{\point^{\run}\}_{k\in\N}$.

\begin{lemma}
\label{lem:omega1}
If $\hat{\point}\in\scrL$ then $\hat{\point}\in\Omega$ or $\hat{\point}\notin\Omega_{\eta}$.
\end{lemma}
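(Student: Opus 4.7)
The plan is to argue by contradiction, leveraging the interplay between the complementarity conditions available at limit points and the constancy of the reduced cost on $\Omega$. Concretely, suppose $\hat{\point}\in\scrL$ with $\dist(\hat{\point},\Omega)<\eta$ but $\hat{\point}\notin\Omega$; I aim to derive a contradiction provided $\eta$ is chosen sufficiently small (below a modulus of continuity to be identified).

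First I would extract structural consequences of $\hat{\point}\in\scrL\setminus\Omega$. Since $\obj$ is constant on $\scrL$ by \cref{lem:LimitSet}(c), we have $\obj(\hat{\point})=\obj(\sol)$. Combined with the characterization $\Omega=\{\point\in\points: \obj(\point)=\obj(\sol),\ \point_{\bar{J}^{c}}=0\}$ established just before the statement of the lemma, the assumption $\hat{\point}\notin\Omega$ forces the existence of an index $i_{0}\in\bar{J}^{c}$ with $\hat{\point}_{i_{0}}>0$. On the other hand, \cref{lem:KKTLimit} gives $\hat{\point}\in\Lambda$, i.e., $\diag(\hat{\point})r(\hat{\point})=0$, and the positivity of $\hat{\point}_{i_{0}}$ then forces $r_{i_{0}}(\hat{\point})=0$.

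Second, I would compare $r(\hat{\point})$ with $r(\sol)$. By \cref{lem:rconstant}, $r(\bar\point)=r(\sol)$ for every $\bar\point\in\Omega$, and by definition of $\bar{J}^{c}$ we have $r_{i_{0}}(\sol)\neq 0$; hence $|r_{i_{0}}(\bar\point)|=|r_{i_{0}}(\sol)|>0$ \emph{uniformly} in $\bar\point\in\Omega$. If the map $\point\mapsto r(\point)$ is continuous at each $\bar\point\in\Omega$ (with a modulus uniform on this compact set), then choosing $\eta$ below this modulus would guarantee $r_{i_{0}}(\hat{\point})\neq 0$, contradicting the previous paragraph. This would close the proof.

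The hard part is justifying the continuity of $\point\mapsto r(\point)=\nabla\obj(\point)-\cmat^{\top}y(\point)$ at points of $\Omega\subseteq\bd(\clorthant)$, since $y(\point)=(\cmat\hmat(\point)^{-1}\cmat^{\top})^{-1}\cmat\hmat(\point)^{-1}\nabla\obj(\point)$ involves the matrix $\cmat\hmat(\point)^{-1}\cmat^{\top}$, which could in principle degenerate as $\point$ approaches the boundary. By \cref{def:kernel}\itemref{itm:growth}, each $1/\theta_{i}''(\cdot)$ extends continuously to the boundary with value $0$, so at $\bar\point\in\Omega$ the extended $\hmat(\bar\point)^{-1}$ is diagonal and supported precisely on $\bar{J}$ (recall $\bar\point_{i}=0$ for $i\in\bar{J}^{c}$). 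Continuity and eventual invertibility of $\cmat\hmat(\point)^{-1}\cmat^{\top}$ in a neighborhood of $\Omega$ can then be extracted from the Cauchy–Binet expansion already exploited in the proof of \cref{lem:lower}, which represents $y(\point)$ as a ratio of polynomial sums in the quantities $1/\theta_{i}''(\point_{i})$; the uniform bound on $y(\point^{\run})$ established there (more precisely, the boundedness shown via the quantities $\omega_{i}(\point)$) delivers the required non-degeneracy along sequences approaching $\Omega$, and hence the continuity needed to close the contradiction.
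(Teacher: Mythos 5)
Your argument is correct and follows essentially the same route as the paper's proof: assume $\hat{\point}\in\Omega_{\eta}\setminus\Omega$, use $\obj(\hat{\point})=\obj(\sol)$ to produce an index $j\in\bar{J}^{c}$ with $\hat{\point}_{j}>0$, then combine the constancy of $r$ on $\Omega$ (\cref{lem:rconstant}) with uniform continuity of $r$ on the compact sublevel set to force $r_{j}(\hat{\point})\neq 0$, contradicting the complementarity $\diag(\hat{\point})r(\hat{\point})=0$ from \cref{lem:KKTLimit}. Your additional care in justifying the continuity of $y(\cdot)$ and $r(\cdot)$ up to the boundary via the Cauchy--Binet representation is a welcome refinement of a point the paper asserts without comment, but it does not change the underlying argument.
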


\begin{proof}
The proof follows via an argument by contradiction.
Assume that $\hat{\point}\notin\Omega$ and $\hat{\point}\in\Omega_{\eta}$.
Therefore, there exists a point $\tilde{\point}\in\Omega$ such that $\norm{\hat{\point}-\tilde{\point}}<\eta$.
Since $\obj(\sol)=\obj(\hat{\point})$ (\cref{lem:LimitSet}), there must exist $j\in\bar{J}^{c}$ such that $\hat{\point}_{j}>0$.
$r:\points\to\Rn$ is continuous and bounded.
$[-\infty<f\leq \obj(\point^{0})]$ is compact by assumption.
Hence, $r$ is uniformly continuous on $[-\infty<f\leq \obj(\point^{0})]$, guaranteeing the existence of a scalar $\eta>0$ such that 
\begin{equation}
\norm{r(\point)-r(z)}
	\leq \min_{i\in \bar{J}^{c}} r_{i}(\sol)/2
\end{equation}
whenever $\obj(\point), \obj(z) \leq \obj(\point^{0})$ and $\norm{\point-z}\leq\eta$.
In particular, the uniform continuity of the dual variable guarantees that 
\begin{flalign}
\abs{r_{j}(\tilde{\point})-r_{j}(\hat{\point})}
	\leq \abs{r_{j}(\sol)}/2,
\end{flalign}
for some $j\in\bar{J}^{c}$.
Since $r_{j}(\tilde{\point})=r_{j}(\sol)$ by \cref{lem:rconstant}, this implies $r_{j}(\hat{\point})\geq \abs{r_{j}(\sol)}/2 > 0$.
Hence $\diag(\hat{\point})r(\hat{\point})\neq0$, contradicting the conclusion $\hat{\point}\in\scrL\subset\Lambda$ of \cref{lem:KKTLimit}.
\end{proof}

\begin{lemma}
\label{lem:omega2}
$\scrL\subseteq\Omega$.
\end{lemma}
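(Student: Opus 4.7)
The plan is to exploit the connectedness of $\scrL$ (granted by \cref{lem:LimitSet}) together with the dichotomy established in \cref{lem:omega1}. I would show that $\scrL\cap\Omega$ is nonempty, relatively open, and relatively closed in $\scrL$, so by connectedness it must equal all of $\scrL$.

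First, I would note that $\scrL\cap\Omega$ is nonempty: the fixed limit point $\sol$ chosen at the start of the analysis lies in $\scrL$ and, by construction ($\sol$ satisfies $\diag(\sol)r(\sol)=0$ and hence $\sol_{\bar J^{c}}=0$ together with the optimality properties established via convexity), lies in $\Omega$. Second, relative closedness is immediate: $\Omega$ was shown to be closed as a sublevel set of a continuous convex function over the closed feasible region, so $\scrL\cap\Omega$ is closed in $\scrL$.

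The crux is relative openness, and this is where \cref{lem:omega1} is applied with the specific $\eta>0$ produced in its proof. By definition $\Omega\subseteq\{\point:\dist(\point,\Omega)<\eta\}$, hence $\scrL\cap\Omega\subseteq\Omega_{\eta}$. Conversely, \cref{lem:omega1} states that every $\hat\point\in\scrL$ either lies in $\Omega$ or lies outside $\Omega_{\eta}$; taking the contrapositive gives $\Omega_{\eta}\subseteq\Omega$, and therefore $\Omega_{\eta}=\scrL\cap\Omega$. Since the set $\{\point\in\Rn:\dist(\point,\Omega)<\eta\}$ is open in $\Rn$, $\Omega_{\eta}$ is relatively open in $\scrL$; transferring this to $\scrL\cap\Omega$ via the equality above yields the required relative openness.

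Combining the three properties with the connectedness of $\scrL$ forces $\scrL\cap\Omega=\scrL$, i.e.\ $\scrL\subseteq\Omega$. The main obstacle in this outline is not the topology argument itself (which is essentially bookkeeping once \cref{lem:omega1} is in hand), but rather confirming that the particular $\eta$ constructed inside the proof of \cref{lem:omega1} is uniform enough to support the key identity $\Omega_{\eta}=\scrL\cap\Omega$; since that $\eta$ was chosen from the uniform continuity of $r$ on the compact sublevel set $[\obj\leq\obj(\point^{0})]$ and from the strictly positive components $r_{i}(\sol)$ for $i\in\bar J^{c}$, this uniformity is already secured, and the clopen argument closes the proof.
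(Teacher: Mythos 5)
Your proof is correct and rests on exactly the same two ingredients as the paper's: the connectedness of $\scrL$ from \cref{lem:LimitSet} and the dichotomy of \cref{lem:omega1}. The paper phrases the connectedness step as a contradiction (a connected set meeting both $\Omega$ and the complement of $\Omega_{\eta}$ would have to contain a point of $\Omega_{\eta}\setminus\Omega$, which \cref{lem:omega1} forbids), whereas you package the identical fact as a clopen argument on $\scrL\cap\Omega=\Omega_{\eta}$; this is a cosmetic difference only.
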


\begin{proof}
Assume there exists an accumulation point $\hat{\point}\notin\Omega$.
From \cref{lem:omega1}, we deduce that $\hat{\point}\notin\Omega_{\eta}$.
Since the limit set is connected, it follows that the sequence $\{\point^{\run}\}_{k\in\N}$ must have accumulation points in $\Omega_{\eta}\setminus\Omega$.
Hence, there exists $\tilde{\point}\in\scrL\cap(\Omega_{\eta}\setminus\Omega)$.
In particular, $\tilde{\point}\in\scrL$, so that $\obj(\tilde{\point})=\obj(\sol)$.
Furthermore, $\tilde{\point}\notin\Omega$, so there exists $j\in\bar{J}^{c}$ such that $\tilde{\point}_{j}>0$.
From this we derive the same contradiction as in \cref{lem:omega1}.
\end{proof}
This shows that for every converging subsequence $\point^{k_{q}}$, we have $\lim_{q\to\infty}r(\point^{k_{q}})=r(\sol)$.
Suppose now that $r_{j}(\sol)\equiv \bar{r}_{j}<0$ for some $j\in S$.
Then, by the complementarity condition $\diag(\sol)r(\sol)=0$, we have $j\in\bar{J}^{c}$.
By continuity, we know that there exists a $\kappa\in\N$ such that $r_{j}(\point^{\run})<0$ for all $k$ far along the subsequence, say all $k\geq\kappa$.
Therefore, for all $k\geq\kappa$ we conclude 
\begin{flalign}
\point^{k+1}_{j}
	&= \point_{j}^{\run} - \step^{\run} r_{j}(\point^{\run})/\theta''_{j}(\point_{j}^{\run})
	> \point_{j}^{\run}.
\end{flalign}
By induction, we conclude that $\point_{j}^{\run}> \point_{j}^{\kappa}\geq 0$ for all $k\geq\kappa$, a contradiction.
\cref{thm:main}\itemref{itm:cvx} now follows from the \ac{KKT} conditions \eqref{eq:KKT1} and \eqref{eq:KKT2}.
\qed

\subsection*{Proof of \cref{thm:main}\itemref{itm:isolated}}
We know that $\scrL$ is a connected set.
From \cref{lem:KKTLimit}, we know that $\scrL\subset\Lambda$.
Since the iterate changes goes to zero (\cref{lem:changezero}), this implies that the entire sequence converges.
Hence, $\scrL=\{\sol\}\in\points$, with $\sol$ depending only on the initial condition.
Since $\diag(\sol)r(\sol)=0$ by complementarity, the same contradiction argument used in the previous paragraph rules out the possibility that $r_{i}(\sol)<0$ for some $i\in\bar{J}^{c}$.
Hence, $\sol$ is a \ac{KKT} point and our claim follows. 
\qed

\subsection*{Proof of \cref{thm:main}\itemref{itm:strict}}
Let $\sol\in\scrL$ and let $\bar{J}_{0} = \{i\in S: r_{i}(\sol)=0\},\bar{J}_{+}=\{i\in S: r_{i}(\sol)>0\},\bar{J}_{-}=\{i\in S: r_{i}(\sol)<0\}$.
Now, define the set 
\begin{flalign}
\bar{\Lambda}
	= \setdef{\point\in\Lambda}{r_{\bar{J}_{0}}(\point)=0,r_{\bar{J}_{+}}(\point)>0,r_{\bar{J}_{-}}(\point)<0}
\end{flalign}
and let $\scrB = \setdef{\point\in\Rn}{\norm{\point}\leq 1}$ be the unit ball in $\Rn$.
By the primal non-degeneracy assumption and strict complementarity, $\bar{\Lambda}$ is isolated from the rest of $\Lambda$.
Hence, there exists $\shrink>0$ such that $(\bar{\Lambda}+\shrink\scrB)\cap\Lambda=\bar{\Lambda}$.
Since $\scrL$ is connected and contained in $\Lambda$, we conclude that
\(
\scrL\cap(\bar{\Lambda} + \shrink\scrB)
	\subseteq \Lambda\cap(\bar{\Lambda}+\shrink\scrB)=\bar{\Lambda}.
\)
Hence, for every $j\in\bar{J}_{-}$ we have $r_{j}(\point^{\run})<0$ for all $k$ sufficiently large.
Repeating the argument we used to prove part \itemref{itm:cvx} of the theorem, we again arrive at a contradiction.
We conclude that $\bar{J}_{-}=\emptyset$, \ie $r(\sol)\geq0$.
\qed

\section{Convergence rate}
\label{sec:rates}

\newcommand{\objdiff}{d}

In this section, we establish an estimate of the value convergence rate of \eqref{eq:HBA} in the special case where $\obj$ is quadratic, \ie
\begin{equation}
\label{eq:quadratic}
\obj(x)
	= \frac{1}{2}x^{\top}Qx
	+ \cost^{\top}x
\end{equation}
for some symmetric $Q\in\scrS^{\nDims}$ and $\cost\in\Rn$.
When $Q$ is the zero matrix, we recover a linear programming problem.
In the rest of this section, we will focus on the challenging case where $Q$ has at least one negative eigenvalue, in which case \eqref{eq:Opt} is NP-complete \cite{Vav90}.

Our proof establishes sublinear convergence of the sequence $\obj(x^{k})$ to a \ac{KKT} point.
This result generalizes and extends previous work of Tseng \cite{Tse04} and Tseng et al. \cite{TseBomSch11}.
Our results are based on techniques developed by \cite{TseBomSch11};
however, the introduction of a Riemannian metric necessitates a series of intricate estimates in order to establish a rate of convergence.
Specifically, our analysis requires some mild additional control on the metric-inducing kernels close to the boundary of the feasible set, which we call \emph{moderate steepness}:


\begin{assumption}
\label{asm:moderate}
A kernel function $\theta\from(0,\infty)\to\R$ is \emph{moderately steep} at $0$ if there exist some $\eps_{i}\in(0,1)$, $\omega\geq1/2$ and $m,M>0$ such that
\begin{equation}
\label{eq:moderate}
\frac{m}{s}
	\leq\theta''(s)
	\leq \frac{M}{s^{2\omega}}
	\quad
	\text{for all $s\in(0,\eps)$}.
\end{equation}
\end{assumption}

We verify below that the kernels described in \cref{sec:HR} satisfy this condition:
\smallskip
\begin{enumerate}
\addtolength{\itemsep}{\smallskipamount}
\item
$\theta(t) = \frac{1}{2} \beta t^{2} + t\log t$ for $t\geq 0$.
Then $\theta'(t) = \beta + 1/t$, and \eqref{eq:moderate} is satisfied with $\omega = 1/2$, $m=1$ and $M=1+\beta\eps$.
\item
$\theta(t)= \frac{1}{2}\beta t^{2} + \frac{1}{(1-\pexp)(2-\pexp)} t^{2-\pexp}$, $\pexp\in(1,2)$.
Then $\theta''(t) = \beta + 1/t^{\pexp}$, so \eqref{eq:moderate} is satisfied with $m = p\eps^{p-1}$, $M=\beta\eps^{2\omega}+p\eps^{p+2(\omega-1)}$, and $\omega=1$.
\item
$\theta(t) = \frac{1}{2}\beta t^{2} - \log t$.
Then $\theta''(t) = \beta + \frac{1}{t^{2}}$, and \eqref{eq:moderate} is satisfied with $m=\frac{1}{\eps}$ and $M=\beta\eps^{2\omega} + \eps^{2(\omega-1)}$, and $\omega=1$.
\end{enumerate}

Under the assumption that all the metric-inducing kernels satisfy the moderate steepness property, we are able to obtain the announced sublinear convergence rate of the function value sequence.
\begin{theorem}
\label{thm:rate}
Assume $\obj$ is of the form \eqref{eq:quadratic} for some $Q\in\R^{n\times n}$, $\cost\in\Rn$.
Suppose that \eqref{eq:HBA} is run with metric-inducing kernels $\theta_{1}(x),\ldots,\theta_{n}(x),$ satisfying \cref{asm:moderate}, and generating the sequence $(x^{k})_{k\geq 0}$.
Then $\obj(x^{k})$ converges to some $\obj_{\infty}\in\R$ and
\begin{equation}
\obj(x^{k}) - \obj_{\infty}
	= \bigoh(k^{-\rho})
\end{equation}
where $\bar\omega = \max\{1,\omega\}$ and $\rho = 1/(2\bar\omega-1)$.
\end{theorem}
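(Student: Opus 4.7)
The plan is to reduce the rate claim to a sub-geometric recursion
\[
d_{k} - d_{k+1} \;\geq\; c\, d_{k}^{2\bar\omega}
\]
on the suboptimality $d_{k} := \obj(\point^{k}) - \obj_{\infty}$. Since $\bar\omega \geq 1$, a standard lemma on non-negative sequences (if $a_{k} - a_{k+1}\geq \gamma a_{k}^{\nu}$ with $\nu>1$, then $a_{k} = \bigoh(k^{-1/(\nu-1)})$) then delivers $d_{k} = \bigoh(k^{-1/(2\bar\omega-1)}) = \bigoh(k^{-\rho})$. Convergence of $\obj(\point^{k})$ to some limit $\obj_{\infty}\in\R$ is already guaranteed by \cref{thm:main}(b), so everything reduces to establishing the recursion.

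I would derive the recursion by pairing a descent estimate with a matching error bound. For the descent side, combining the Armijo rule \eqref{eq:Armijo} with the uniform step-size lower bound (\cref{lem:step}) gives
\[
d_{k} - d_{k+1} \;\geq\; c_{1}\,\|\vecfield(\point^{k})\|_{\point^{k}}^{2} \;=\; c_{1}\sum_{i=1}^{\nDims}\frac{r_{i}(\point^{k})^{2}}{\theta''_{i}(\point_{i}^{k})},
\]
where I use the diagonal form of $\hmat$ together with $\vecfield = -\hmat^{-1}r$. Moderate steepness (\cref{asm:moderate}), \ie $\theta''_{i}(s)\leq M/s^{2\omega}$ for $s$ near $0$, coupled with continuity of $\theta''_{i}$ away from $0$ and the boundedness of the iterates in the sublevel set $[\obj\leq \obj(\point^{0})]$, yields a uniform pointwise estimate $1/\theta''_{i}(s) \geq c_{2}\, s^{2\bar\omega}$ along the trajectory (for $\omega\geq1$ this is immediate; for $\omega<1$ one uses $s^{2\omega}\geq s^{2\bar\omega}$ on $s\leq 1$ and continuity on the complement). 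Hence
\[
d_{k} - d_{k+1} \;\geq\; c_{3}\, E(\point^{k}),
\qquad
E(\point) := \sum_{i=1}^{\nDims} \point_{i}^{2\bar\omega}\, r_{i}(\point)^{2}.
\]

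The harder half is the complementary error bound $d_{k}\leq c_{4}\, E(\point^{k})^{1/(2\bar\omega)}$. Here the quadratic structure of $\obj$ is crucial: since $\nabla\obj$ is affine, the dual variable $y(\point)$ and the reduced cost $r(\point)$ are rational in $\point$ and the full \ac{KKT} system defining $\Lambda$ is semialgebraic, opening the door to a Hoffman/\L{}ojasiewicz-type inequality. Following the strategy of \cite{TseBomSch11}, I would localize around a limit point $\sol\in\scrL$ and split indices into $I_{+}=\{i:\sol_{i}>0\}$ and $I_{0}=\{i:\sol_{i}=0\}$. On $I_{+}$, the limiting complementary slackness established in \cref{lem:KKTLimit} forces $r_{i}(\sol)=0$, and Hoffman's bound applied to the affine system $(r_{i})_{i\in I_{+}}$ controls the contribution of those coordinates to $d_{k}$ linearly in $|r_{i}(\point^{k})|$. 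On $I_{0}$, the weight $\point_{i}^{2\bar\omega}$ in $E$ is tailored to compensate the polynomial blow-up of $1/\theta''_{i}$ near the boundary, producing the H\"older exponent $1/(2\bar\omega)$ in the relation between $d_{k}$ and $E(\point^{k})$. Compactness of $\scrL$ and $\mathrm{dist}(\point^{k},\scrL)\to 0$ (\cref{lem:LimitSet}) then upgrade the local estimate to a single constant $c_{4}$ valid for all sufficiently large $k$, with the finitely many early iterates absorbed into a larger leading constant.

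The main obstacle I foresee is precisely this error-bound step. The non-convexity of $Q$ rules out any convex-analytic shortcut, so the argument has to be genuinely semialgebraic, and the weighted residual $E$ must be calibrated carefully against the polynomial growth of $1/\theta''_{i}$ near the boundary of $\clorthant$ in order for the two sides of the recursion to match. It is exactly this calibration that fixes the exponent $\rho = 1/(2\bar\omega-1)$ in terms of the moderate-steepness constants of the kernels, and that explains why the rate depends only on the choice of barrier and not on the primitives $Q$ and $\cost$ of the quadratic program.
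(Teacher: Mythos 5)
Your overall architecture is the right one and matches the paper's: a descent estimate from the Armijo rule plus a complementary error bound combine into a recursion $d_{k}-d_{k+1}\geq c\,d_{k}^{2\bar\omega}$, which yields $d_{k}=\bigoh(k^{-1/(2\bar\omega-1)})$. The descent half is fine. The problem is that the error bound $d_{k}\leq c_{4}E(\point^{k})^{1/(2\bar\omega)}$ is the entire content of the theorem, and your sketch of it has concrete defects rather than just missing details. First, localizing around a single limit point $\sol$ and fixing the support split $I_{+}=\{i:\sol_{i}>0\}$, $I_{0}=\{i:\sol_{i}=0\}$ does not work when $\scrL$ is not a singleton: different subsequences can approach limit points with different supports, and in the non-convex quadratic case nothing forces $\scrL$ to be a point. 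The paper avoids this by partitioning the \emph{iteration indices} into classes $\scrK_{J}$, $J\subseteq S$, according to which factor of $\abs{\eta_{j}^{k}}=\abs{r_{j}^{k}}\,\theta_{j}''(x_{j}^{k})^{-1/2}$ is small at step $k$ (either $x_{j}^{k}$ is small or $r_{j}^{k}$ is), and applies Hoffman's bound to a polyhedron $\scrP_{J}$ attached to each class that occurs infinitely often. Second, your argument never addresses why $\obj(\bar{x}^{k})=\obj_{\infty}$ for the projected points $\bar{x}^{k}$: one must show that $\obj$ is \emph{constant} on the limiting polyhedron $\scrP_{J}$, which is a separate computation exploiting the quadratic form and $A(p-p')=0$. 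Without it, Hoffman only controls $\abs{\obj(x^{k})-\obj(\bar x^{k})}$, not $d_{k}$.

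Third, a warning about the framing: a generic \L{}ojasiewicz/KL inequality for a semialgebraic function has an exponent depending on the data $(Q,A,c)$, which would contradict the theorem's assertion that $\rho$ depends only on the kernel. The paper gets the kernel-only exponent precisely because it uses Hoffman's (linear) error bound for polyhedra together with the explicit second-order expansion of the quadratic, with the kernel entering only through the moderate-steepness conversion $x_{j}^{k}\lesssim\abs{\eta_{j}^{k}}^{1/(2\bar\omega)}$. Relatedly, your weighted residual $E(\point)=\sum_{i}\point_{i}^{2\bar\omega}r_{i}(\point)^{2}$ is genuinely smaller than the paper's intermediate quantity $\norm{\eta^{k}}_{1}^{2}$ (since $\theta_{i}''(s)^{-1}\leq s/m$ while $s^{2\bar\omega}\leq s$ for small $s$), so the bound $d_{k}\lesssim E^{1/(2\bar\omega)}$ you need is strictly stronger than the bound $d_{k}\lesssim\norm{\eta^{k}}_{1}^{1/\bar\omega}$ the paper proves; in particular the delicate term $x_{j}^{k}\abs{r_{j}^{k}}$ (handled in the paper by the identity $r_{j}^{k}=-\eta_{j}^{k}\theta_{j}''(x_{j}^{k})^{1/2}$ and a case split on $\omega\gtrless1$) is exactly where a naive estimate degrades the exponent to $1/(4\bar\omega)$. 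I would recommend abandoning $E$ in favor of $\eta^{k}=-\hmat(x^{k})^{-1/2}r^{k}$, adopting the $\scrK_{J}$ partition, and proving the constancy of $\obj$ on $\scrP_{J}$ before assembling the recursion.
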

\begin{proof}
Let $r^{k}\equiv r(x^{k})$, $y^{k}\equiv y(x^{k})$, and set $\eta^{k}:=H(x^{k})^{1/2}v(x^{k})=-H(x^{k})^{-1/2}r^{k}$.
Since $\lim_{k\to\infty}(\obj(x^{k+1}) - \obj(x^{k}))=0$, and Armijo backtracking guarantees sufficient decrease by
\begin{flalign}
\obj(x^{k+1})
	\leq \obj(x^{k}) + \armijo\step^{k}\nabla \obj(x^{k})^{\top}v(x^{k})
	= \obj(x^{k}) - \armijo\step^{k}\norm{H(x^{k})^{1/2}v(x^{k})}^{2}_{2},
\end{flalign}
it follows that $\eta^{k}\to 0$.
For $J\in 2^{S}$, define
\begin{equation}
\label{eq:K}
\scrK_{J}
	=
	\setdef{k\in\N_{0}}{\theta_{j}''(x_{j}^{k})^{-1/2} \leq \abs{\eta_{j}^{k}}^{1/2}\;\forall j\in J\text{ and }\abs{r_{j}^{k}} \leq \abs{\eta_{j}^{k}}^{1/2}\;\forall j\in J^{c} }.
\end{equation}
Since $\abs{\eta_{j}^{k}} = \abs{r_{j}^{k} \theta_{j}''(x_{j}^{k})^{-1/2}}$ by definition, it follows that either $\abs{r_{j}^{k}} \leq \abs{\eta_{j}^{k}}^{1/2}$ or $\theta_{j}''(x_{j}^{k})^{-1/2} \leq \abs{\eta_{j}^{k}}^{1/2}$.
Hence, for every $k\in\N_{0}$, there exists at least one $J\in 2^{S}$ such that $k\in \scrK_{J}$.
Since $2^{S}$ is finite, there is at least one set $J$ for which $\scrK_{J}$ is infinite.
Fix such a set $J$.
For all $k\in\scrK_{J}$, consider the system of linear inequalities defining a point $(p,z)\in\R^{n}\times\R^{m}\cong\R^{n+m}$, given by
\begin{flalign}
p_{J}
	&= x_{j}^{k},
	\quad
	q_{j}^{\top}p-a_{j}^{\top}z=-c_{j} + r_{j}^{k}
	\quad
	\text{for all $j\in J^{c}\equiv S\setminus J$},
	\notag\\
p
	&\geq 0,
	\quad
	Ap=b.
\end{flalign}
Let $\scrP_{k}$ be the polyhedron defined by these inequalities.
Since $(x^{k},y^{k})$ satisfies these inequalities, we have $\scrP_{k}\neq\emptyset$ for all $k\in\scrK_{J}$.
Moreover, for all $j\in J$, we have $\lim_{k\to\infty,k\in\scrK_{J}} \theta_{j}''(x_{j}^{k}) = \infty$, implying in turn that $\lim_{k\to\infty,k\in\scrK_{J}}x_{j}^{k} = 0$.
Therefore, for all $k\in\scrK_{J}$ sufficiently large, \cref{asm:moderate} yields for $M_{\ast} = \max\{M_{1},\dotsc,M_{n}\}$ the bound
\begin{subequations}
\begin{flalign}
(x_{j}^{k})^{\omega}
	&\leq M_{j}^{1/2} \abs{\eta_{j}^{k}}^{1/2}
	\leq M_{\ast}^{1/2} \abs{\eta_{j}^{k}}^{1/2}
	\quad
	\text{for all $j\in J$},
	\\
\abs{r_{j}^{k}}
	&\leq \abs{\eta_{j}^{k}}^{1/2}
	\quad
	\text{for all $j\in J^{c}$}.
\end{flalign}
\end{subequations}
If $\omega\in[1/2,1)$, then $\abs{\eta_{j}^{k}}^{\frac{1}{2\omega}} \leq \abs{\eta_{j}^{k}}^{\frac{1}{2}}$, and therefore
\begin{subequations}
\begin{alignat}{2}
& x_{j}^{k}
	\leq C_{1}^{1/2} \abs{\eta_{j}^{k}}^{1/2}
	\quad
	&&\text{for all $j\in J$},
\label{eq:xomegasmall}
\\
\abs{r_{j}^{k}}
	&\leq C_{1}^{1/2} \abs{\eta_{j}^{k}}^{1/2}
	\quad
	&&\text{for all $j\in J^{c}$},
\label{eq:romegasmall}
\end{alignat}
\end{subequations}
for all $k\in\scrK_{J}$ sufficiently large, where we set $C_{1} = \max\{1,M_{\ast},M_{\ast}^{1/\omega}\}$.

If $\omega\geq 1$, then $\abs{\eta_{j}^{k}}^{1/(2\omega)}\geq \abs{\eta_{j}^{k}}^{1/2}$, and therefore, for $k\in\scrK_{J}$ sufficiently large, we get
\begin{subequations}
\begin{alignat}{2}
x_{j}^{k}
	&\leq C_{1}^{1/2}\abs{\eta_{j}^{k}}^{1/(2\omega)}
	\quad
	&&\text{for all $j\in J$},
	\\
\abs{r_{j}^{k}}
	&\leq C_{1}^{1/2} \abs{\eta_{j}^{k}}^{\frac{1}{2\omega}}
	\quad
	&&\text{for all $j\in J^{c}$}.
\end{alignat}
\end{subequations}
Setting $\bar\omega = \max\{1,\omega\}$, the previous two estimates yield
\begin{subequations}
\begin{alignat}{2}
x_{j}^{k}
	&\leq C_{1}^{1/2}\abs{\eta_{j}^{k}}^{1/(2\bar\omega)}
	\quad
	&&\text{for all $j\in J$},
	\\
\abs{r_{j}^{k}}
	&\leq C_{1}^{\frac{1}{2}} \abs{\eta_{j}^{k}}^{1/(2\bar\omega)}
	\quad
	&&\text{for all $j\in J^{c}$},
\label{eq:xomegabig}
\end{alignat}
\end{subequations}
and hence
\begin{equation}
\label{eq:boundxk}
\norm{(x_{j}^{k},r^{k}_{J^{c}})}_{2\bar\omega}^{2\bar\omega} \leq C_{1}^{\bar\omega}\norm{\eta^{k}}_{1},
\end{equation}
for all $k\in\scrK_{J}$ sufficiently large.

Hence, since $\eta^{k}\to 0$, we see that $\{(x_{j}^{k},r^{k}_{J^{c}})\}_{k\in\scrK_{J}}\to 0$.
This implies that the right-hand side defining the polyhedron $\scrP_{k}$ is uniformly bounded.
Since $\{(x^{k},y^{k})\}_{k\in\scrK_{J}}$ is bounded (see the proof of \cref{lem:lower}), any cluster point of this sequence must satisfy
\begin{subequations}
\begin{flalign}
p_{J}
	&= 0,
	\quad
	q^{\top}_{j}p-a^{\top}_{j}z=-c_{j}
	\quad
	\text{for all $j\in J^{c}$},
	\\
p
	&\geq 0
	\quad
	Ap=b.
\end{flalign}
\end{subequations}
Call $\scrP_{J}$ the polyhedron defined by the above linear inequalities.
Let $(\bar{x}^{k},\bar{y}^{k})$ denote the Euclidean projection of $(x^{k},y^{k})$ onto $\scrP_{J}$.
Since $(x^{k},y^{k})\in\scrP_{k}$ for all $k\in\scrK_{J}$, Hoffman's error bound \cite[Corollary 3.2.5]{FacPan03} implies that
\begin{equation}
\norm{(\bar{x}^{k},\bar{y}^{k})-(x^{k},y^{k})}_{2} \leq C_{2}\norm{(x_{j}^{k},r^{k}_{J^{c}})}_{2\bar\omega}\qquad\forall k\in\scrK_{J},
\end{equation}
where $C_{2}$ is a constant that depends only on $\bar\omega,Q,A$ and $J$.
Combining this with eq.
\eqref{eq:boundxk} shows that
\begin{equation}
\norm{(\bar{x}^{k},\bar{y}^{k})-(x^{k},y^{k})}_{2} \leq C_{2}C_{1}^{\frac{1}{2}}\norm{\eta^{k}}^{\frac{1}{2\bar\omega}}_{1}\qquad\forall k\in\scrK_{J} 
\textup{ sufficiently large}.
\end{equation}
We next claim that $\obj$ is constant on $\scrP_{J}$.
To see this, let $(p,z),(p',z')\in\scrP_{J}$ arbitrary.
Then,
\begin{flalign}
\obj(p) - \obj(p')
	&= \frac{1}{2}(p-p')^{\top}Q(p-p')+(c+Qp')^{\top}(p-p')
	\notag\\
	&= \frac{1}{2}(p-p')^{\top}Q(p-p')+(c+Qp'-Az')^{\top}(p-p')
	\notag\\
	&= \frac{1}{2}(p-p')^{\top}Q(p-p')
\end{flalign}
where the second equality follows from the fact that $A(p-p')=0$, and the third equality follows from the definition of $\scrP_{J}$.
Similarly $\obj(p') - \obj(p) = \frac{1}{2}(p-p')^{\top}Q(p-p')$, resulting in $\obj(p')=\obj(p)$.

Next, observe that
\begin{flalign}
(Q\bar{x}^{k}+c)^{\top}(x^{k}-\bar{x}^{k})
	&= (Q\bar{x}^{k}+c-A^{\top}\bar{y}^{k})^{\top}(x^{k}-\bar{x}^{k})
	\notag\\
	&= \sum_{j\in J}(q^{\top}_{j}\bar{x}^{k}+c_{j}-a_{j}^{\top}\bar{y}^{k})x_{j}^{k}
	\notag\\
	&= \sum_{j\in J}(q^{\top}_{j}(\bar{x}^{k}-x^{k})-a_{j}^{\top}(\bar{y}^{k}-y^{k})-r_{j}^{k})x_{j}^{k}.
\end{flalign}
From this, we compute
\begin{flalign}
\abs{\obj(x^{k}) - \obj(\bar{x}^{k})}
	&= \abs{\frac{1}{2}(x^{k}-\bar{x}^{k})^{\top}Q(x^{k}-\bar{x}^{k})+(Q\bar{x}^{k}+c)^{\top}(x^{k}-\bar{x}^{k})}
	\notag\\
	&\leq \frac{1}{2} \lambda_{\max}(Q) \, \norm{x^{k}-\bar{x}^{k}}^{2}_{2}
	+ \abs*{\sum_{j\in J}q_{j}^{\top}(\bar{x}^{k}-x^{k})-a_{j}^{\top}(\bar{y}^{k}-y^{k})+r_{j}^{k})x_{j}^{k}}
	\notag\\
	&\leq \frac{1}{2} \lambda_{\max}(Q) \, \norm{x^{k}-\bar{x}^{k}}^{2}_{2}
	\notag\\
	&\quad
	+ \sum_{j\in J} \bracks*{\norm{(q_{j},-a_{j})}
		\cdot
		\norm{(\bar{x}^{k},\bar{y}^{k})-(x^{k},y^{k})}_{2}x_{j}^{k}+x_{j}^{k}\abs{r_{j}^{k}}}
\end{flalign}
Collecting all the information from the previous estimates, we can bound each of these terms for $k\in\scrK_{J}$ sufficiently large and $j\in J$ , as follows:
\begin{itemize}
\item
$\norm{x^{k}-\bar{x}^{k}}_{2}^{2} \leq\norm{(x^{k},y^{k})-(\bar{x}^{k},\bar{y}^{k})}_{2}^{2} \leq C_{2}^{2}C_{1}\norm{\eta^{k}}_{1}^{1/\bar\omega}.$
\item
$x_{j}^{k} \leq C_{1}^{1/2} \abs{\eta_{j}^{k}}^{1/(2\bar\omega)}$.
\item
$x_{j}^{k}\abs{r_{j}^{k}} \leq C_{3}\abs{\eta_{j}^{k}}^{1/\bar\omega}$.
\end{itemize}
To see the last relation, observe that if $\omega\in[1/2,1)$, we have
\begin{flalign}
x_{j}^{k}\abs{r_{j}^{k}}
	&= x_{j}^{k}\abs{\eta_{j}^{k}} \theta_{j}''(x_{j}^{k})^{1/2}
	\leq C_{1}^{1/2} (x_{j}^{k})^{1-\omega} \abs{\eta_{j}^{k}}
	\leq C_{1}^{1 - \omega/2} \abs{\eta_{j}^{k}}^{(3-\omega)/2}
	\leq C_{1}\abs{\eta_{j}^{k}}.
\end{flalign}
The first equality uses the identity $r_{j}^{k}=-\eta_{j}^{k}\theta_{j}''(x_{j}^{k})^{1/2}$.
The first inequality uses \cref{asm:moderate}, and the second inequality is a consequence of relation \eqref{eq:xomegasmall}.
The final inequality follows since $\eta_{j}^{k}\to 0$ as $\scrK_{J}\ni k\to\infty.$ Now assume that $\omega\geq 1$.
We first deduce from \cref{asm:moderate} the inequality
$(x_{j}^{k})^\omega\abs{r_{j}^{k}} \leq C_{1}^{1/2}\abs{\eta_{j}^{k}}$, and then
\begin{equation}
\left(x_{j}^{k}\abs{r_{j}^{k}}\right)^\omega
	\leq C_{4} (x_{j}^{k})^\omega\abs{r_{j}^{k}}
	\leq C_{4} C_{1}^{1/2} \abs{\eta_{j}^{k}},
\end{equation}
where $C_{4} = \max_{k\ge1}\abs{r_{j}^{k}}^{\omega-1}<\infty$.
Departing from this relation, we obtain $x_{j}^{k}\abs{r_{j}^{k}} \leq C_{4}^{1/\omega} C_{1}^{1/(2\omega)} \abs{\eta_{j}^{k}}^{1/\omega}$.
To combine the two cases, set $C_{3}:=\max\{C_{1},C_{4}^{1/\omega} C_{1}^{1/(2\omega)}\},$ and recall that $\bar\omega=\max\{1,\omega\}$.
\\

Using all these bounds, we conclude that there exists a constant $C_{J}>0$, such that
\begin{flalign}
\abs{\obj(x^{k}) - \obj(\bar{x}^{k})} \leq C_{J}\norm{\eta^{k}}_{1}^{1/\bar\omega}.
\end{flalign}
for all $k\in\scrK_{J}$ sufficiently large.
Let $C_{\ast}$ be the maximum of $C_{J}$ over all $J\in 2^{S}$ for which $\scrK_{J}$ is infinite.
Thus, there exists an index $\bar{k}\in\N$ sufficiently large, so that for all $k\geq \bar{k}$ we have
\begin{equation}
\label{eq:eta1}
\abs{\obj(x^{k}) - \obj(\bar{x}^{k})}
	\leq C_{\ast}\norm{\eta^{k}}_{1}^{\frac{1}{\bar\omega}}.
\end{equation}
The sequence $\obj(x^{k})$ is bounded and decreasing, so there exists $\obj_{\infty}>-\infty$ such that $\obj(x^{k})\downarrow \obj_{\infty}$.
Since $\{\bar{x}^{k}\}_{k\in\scrK_{J}}\subset\scrP_{J}$, it follows from the constancy of $\obj$ on $\scrP_{J}$ that $\obj(\bar{x}^{k})=\obj_{\infty}$ for all $k\in\scrK_{J}$, and thus for all $k \geq \bar k$.
Hence, \eqref{eq:eta1} becomes
\begin{equation}
\label{eq:basicdelta}
\obj(x^{k})-\obj_{\infty}\leq C_{\ast}\norm{\eta^{k}}_{1}^{1/\bar\omega}\qquad\forall k\geq\bar{k}.
\end{equation}
Set $\objdiff^{k}:=\obj(x^{k})-\obj_{\infty}$.
Armijo backtracking then gives
\begin{flalign}
\obj(x^{k+1}) - \obj(x^{k})
	&\leq -\armijo\step^{k}\norm{\eta^{k}}^{2}_{2}
	\leq -\armijo\step^{k}C_{5}\norm{\eta^{k}}_{1}^{2}
	\leq -C_{6}\norm{\eta^{k}}^{2}_{1}.
\end{flalign}
Here the constant $C_{5}$ captures the equivalence of the norms $\norm{\cdot}_{1}$ and $\norm{\cdot}_{2}$, and the constant $C_{6}$ incorporates the boundedness of the step size sequence $\{\step^{k}\}_{k}$.
Hence,
\begin{flalign}
\obj(x^{k}) - \obj(x^{k+1})=\objdiff^{k}-\objdiff^{k+1}\geq C_{6}\norm{\eta^{k}}^{2}_{1}.
\end{flalign}
Combining this with \eqref{eq:basicdelta}, we conclude that
\begin{flalign}
C_{6}^{-1/(2\bar\omega)} (\objdiff^{k}-\objdiff^{k+1})^{1/(2\bar\omega)}
	\geq \norm{\eta^{k}}_{1}^{1/\bar\omega}
	\geq \objdiff^{k}/C_{\ast}.
\end{flalign}
Hence, for $\kappa = C_{\ast} / C_{6}^{1/(2\bar\omega)}$, we obtain the recursion
\begin{equation}
\kappa\left(\objdiff^{k}-\objdiff^{k+1}\right)^{1/(2\bar\omega)}
	\geq \objdiff^{k}.
\end{equation}
This can be rearranged to yield the equivalent expression
\begin{flalign}
\objdiff^{k+1}
	\leq\objdiff^{k} - \left(\objdiff^{k}/\kappa\right)^{2\bar\omega}
\end{flalign}
for $k \geq \bar k$.
Now, write $\phi(\objdiff^{k})$ for the RHS of the above, and observe that the function $\phi$ is strictly increasing on the interval $[0,\tilde x]$, where  $\tilde x = \left(\kappa^{2\bar\omega}/(2\bar\omega)\right)^{1/(2\bar\omega-1)}$.
Then, fix $\rho=1/(2\bar\omega-1)\in(0,1]$ and choose constants $C>0$ and $K\in\N,K \geq \bar k$ such that $C \geq \kappa^{2\bar\omega/(2\bar\omega-1)}$ 
and $\objdiff^K\leq CK^{-\rho} \leq \tilde x$.

Such a choice of constants $C,K$ is indeed possible:
First look for $K \geq \bar k$ such that $\objdiff^{k} \leq \tilde x$ for all $k \geq K$ and choose $C = \kappa^{2\bar\omega/(2\bar\omega - 1)}$.
If $\objdiff^K\leq CK^{-\rho} \leq \tilde x$ holds, there is nothing further to do.
If $\objdiff^K> CK^{-\rho}$, increase $C$ such that $CK^{-\rho}=\tilde x$ holds.
If $CK^{-\rho}>\tilde x$, increase $K$ to achieve $CK^{-\rho} \leq \tilde x$ and then again increase $C$ such that $CK^{-\rho}=\tilde x$ holds.
Then, $\left(C/\kappa\right)^{2\bar\omega} \geq C$, and therefore we have
\begin{equation}
\left(\frac{C}{k^{\rho}\kappa}\right)^{2\bar\omega}
	\geq Ck^{-2\rho\bar\omega}.
\end{equation}

We will now prove by induction the claim that $\objdiff^{k} \leq Ck^{-\rho} \leq \tilde x$ holds for all $k\ge K$.
The base case $k=K$ holds by construction of $C$ and $K$.
Assume now $k\ge K$ and $\objdiff^{k} \leq Ck^{-\rho} \leq \tilde x$.
Then we obtain
\begin{align}
\objdiff^{k+1}
	&\leq \objdiff^{k} - \left(\objdiff^{k}/\kappa\right)^{2\bar\omega}= \phi(\objdiff^{k})\leq \phi(Ck^{-\rho})
	\leq Ck^{-\rho} - Ck^{-2\rho\bar\omega}
	\leq \frac{C}{(k+1)^{\rho}},
\end{align}
where we used the fact that $\rho\leq 1$ and $1 - 1/k \leq (1 - 1/k)^{\rho}$.
This shows that
\begin{equation}
\obj(x^{k})-\obj_{\infty}
	\leq Ck^{-\rho}\qquad\forall k\geq K,
\end{equation}
so our proof is complete.
\end{proof}

\section{Numerical Experiments}
\label{sec:numerics}
%

In this section, we validate the theoretical analysis of the previous sections via a series of numerical experiments and practical applications.

\subsection{Experiments with common benchmarks}
\label{sec:benchmarks}


\begin{figure}[tbp]
\centering
\begin{subfigure}{\textwidth}
\includegraphics[width=.49\textwidth]{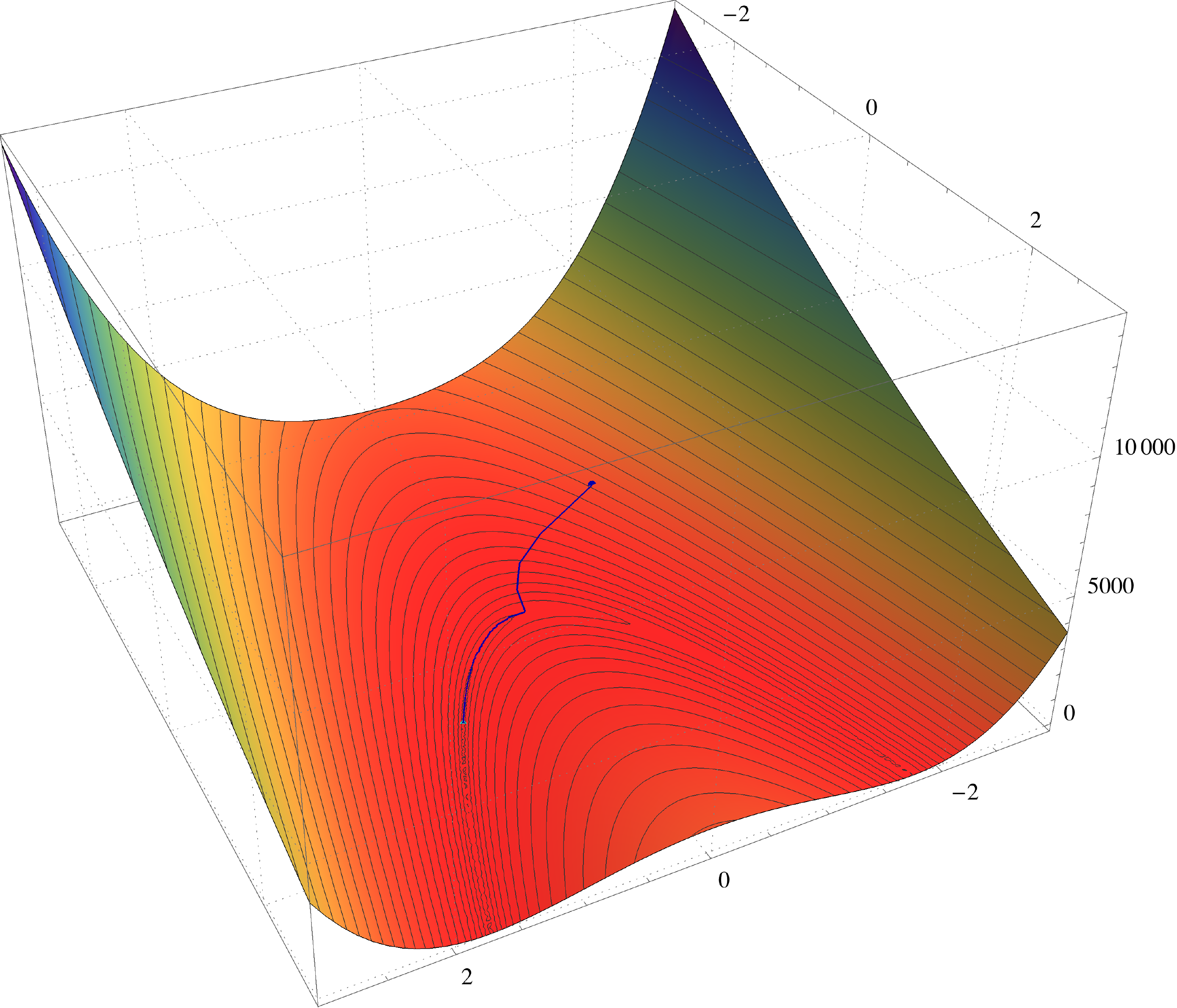}
\hfill
\includegraphics[width=.49\textwidth]{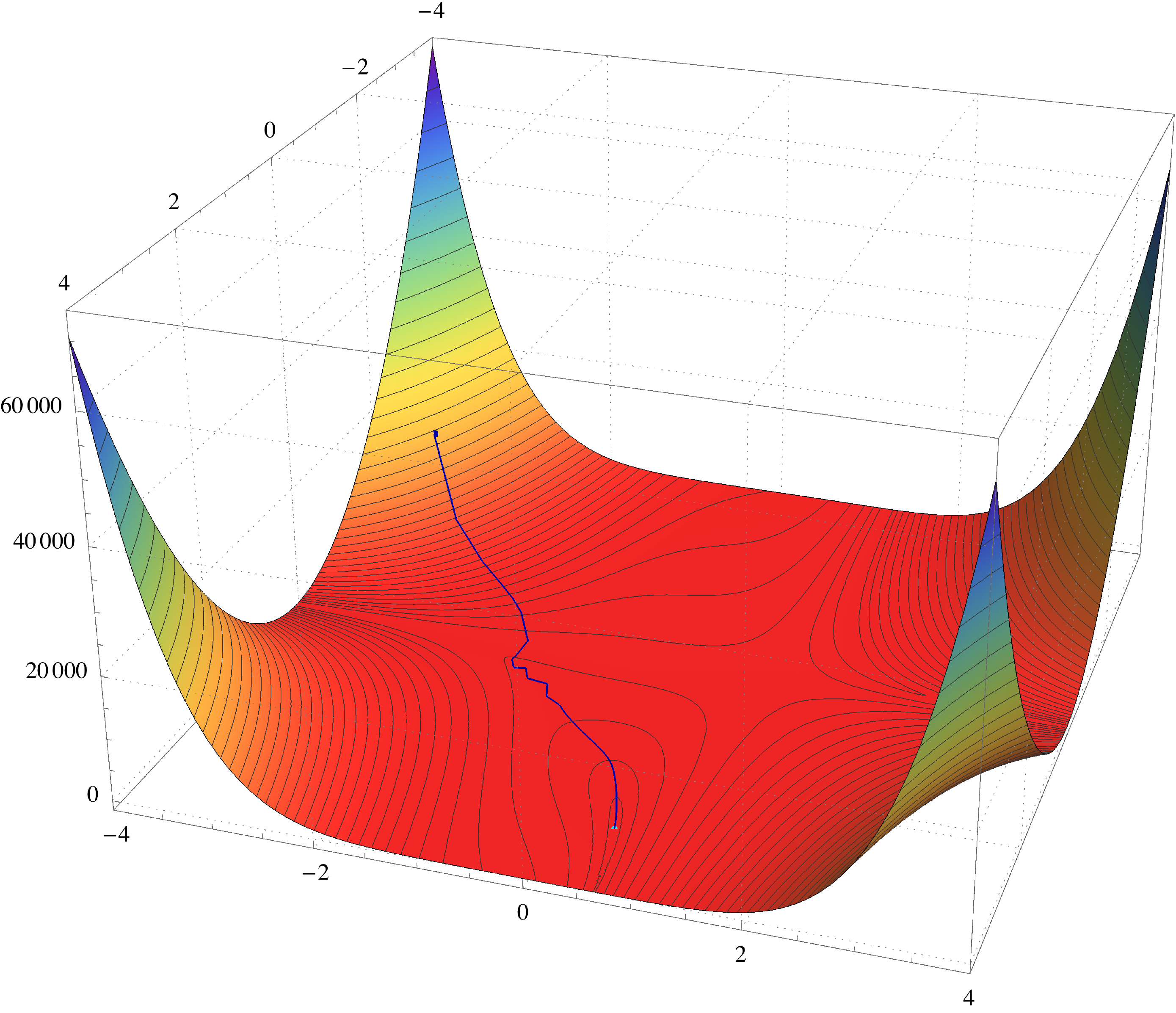}
\caption{\ac{HBA} trajectries for the Rosenbrock and Beale functions (left and right respectively).}
\label{fig:test-trajectories}
\vspace{1ex}
\end{subfigure}
\begin{subfigure}{\textwidth}
\includegraphics[width=.48\textwidth]{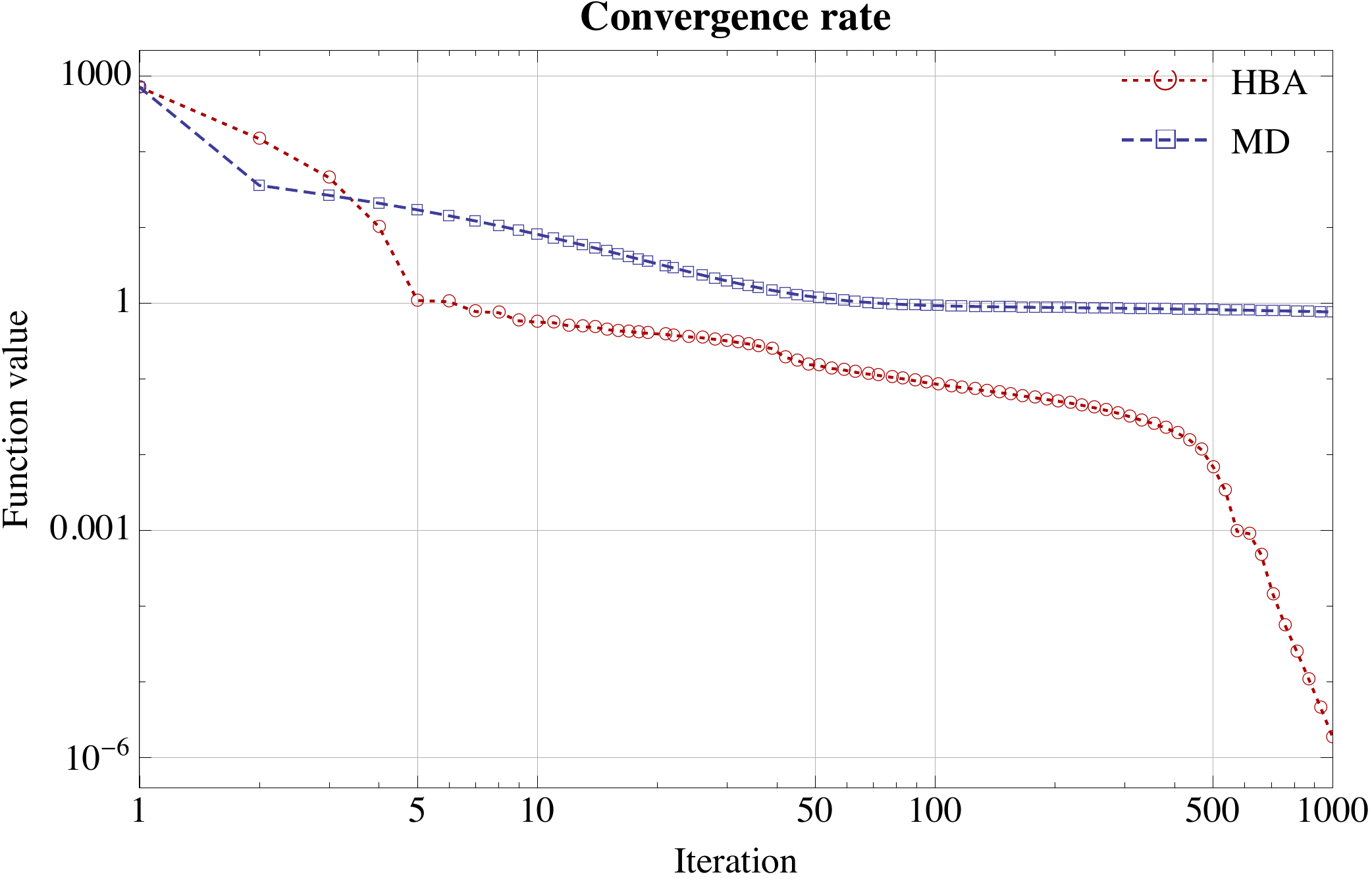}
\hfill
\includegraphics[width=.48\textwidth]{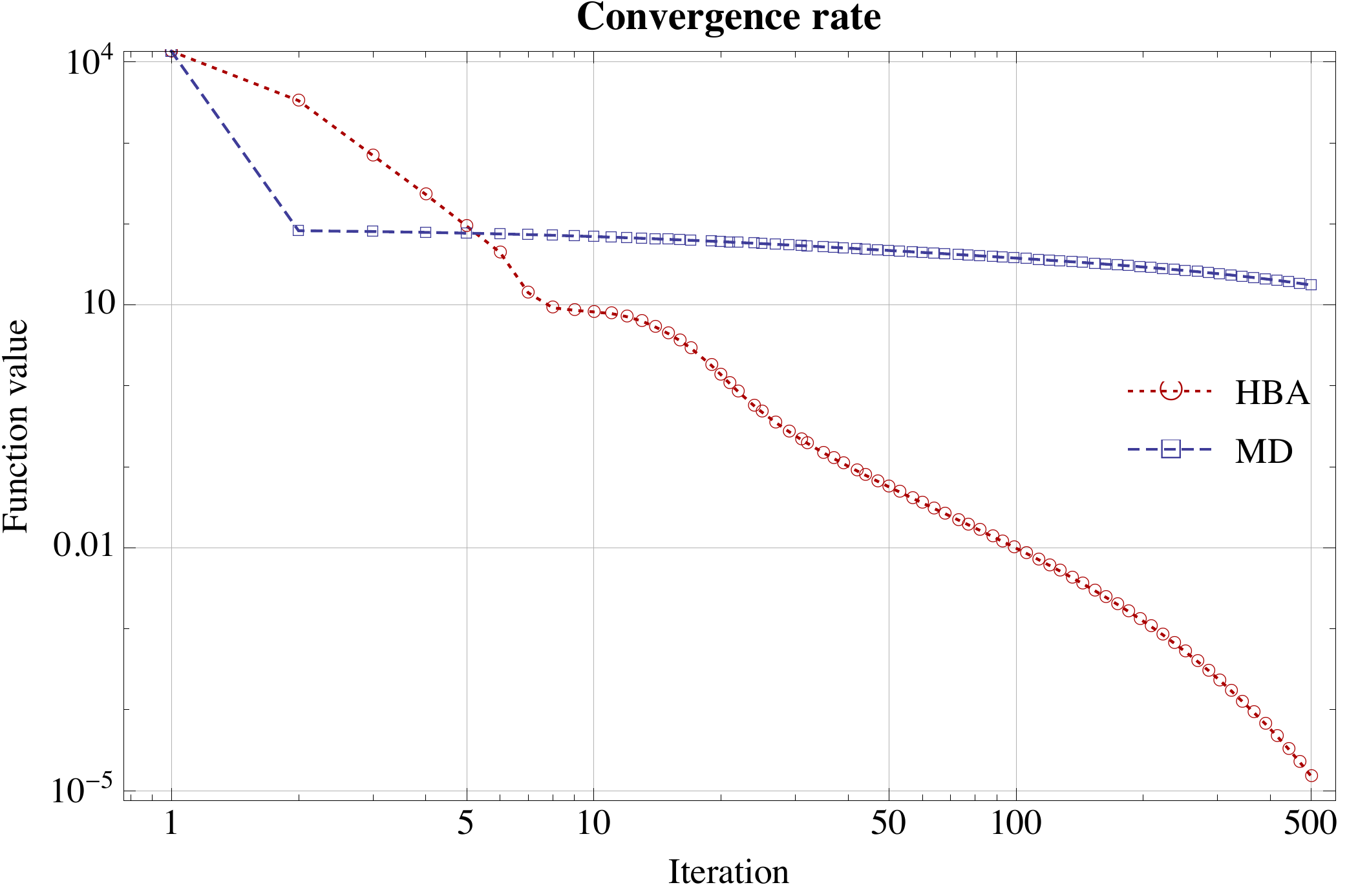}
\caption{Convergence rate for the Rosenbrock and Beale functions (left and right respectively).}
\label{fig:test-rate}
\end{subfigure}
\caption{Convergence of \eqref{eq:HBA} in the case of the Rosenbrock and Beale test functions (\cref{eq:Rosenbrock,eq:Beale} respectively).
The convergence rate of \eqref{eq:HBA} is compared to that of a standard \acl{MD} algorithm;
all experiments were run with the entropic kernel $\theta(x) = x \log x$.}
\label{fig:test}%
\end{figure}


As a first illustration of the convergence of \eqref{eq:HBA}, we focus on two low-dimensional test functions that are widely used in the global optimization literature:
\begin{enumerate}
\addtolength{\itemsep}{\medskipamount}
\item
The Rosenbrock function:
\begin{flalign}
\obj(x_{1},x_{2})
	&= 100 (x_{2} - x_{1^{2}})^{2} + (1- x_{1})^{2},
\label{eq:Rosenbrock}
	\\
\intertext{with input domain $x_{1}, x_{2} \in [-3,3]$.
\item
The Beale function:}
\obj(x_{1},x_{2})
	&= (1.5 - x_{1} + x_{1}x_{2})^{2}
	\notag\\
	&+ (2.25 - x_{1} + x_{1}x_{2}^{2})^{2}
	+ (2.625 - x_{1} + x_{1}x_{2}^{3})^{2},
\label{eq:Beale}
\end{flalign}
with input domain $x_{1}, x_{2} \in [-4,4]$.
\end{enumerate}
\smallskip
The Rosenbrock function is a non-convex unimodal function with a unique global minimum located at the lowest point of a very flat and thin parabolic valley which is notoriously difficult for first-order methods to traverse.
The Beale function is a non-convex multimodal function with very sharp peaks at the corners of the input domain which cause considerable difficulties to aggressive step-size policies.

In \cref{fig:test}, we plot two test runs of the \acl{HBA} (\cref{alg:HBA}) with the negative entropy kernel $\theta(x) = x \log x$ and a random initialization.
For benchmarking purposes, we also ran the corresponding \acl{MD} algorithm \eqref{eq:MD} with the same initialization, step-size and kernel function.
The sample \ac{HBA} trajectories are shown in \cref{fig:test-trajectories} and are seen to converge to a solution of \eqref{eq:Opt}.
Subsequently, the value convergence rate of the algorithm is plotted in \cref{fig:test-rate}:
the log-log scale of the plot indicates a monotonic decrease following a power law convergence rate, consistent with the theoretical predictions of \cref{thm:rate} (the non-uniformity of the algorithm's speed has to do with the very flat valleys/plateaus that the algorithm needs to traverse in order to approach a solution).

\revise{Even though we do not report the results here, a similar behavior was observed in all the common benchmarks (Himmelblau, Styblinski-Tang, etc.) and kernels (Burg, Hellinger, etc.) that we tested.
We find this feature of the \acl{HBA} particularly appealing for practical applications, especially for objectives with a complex landscape.}

\subsection{Applications to traffic routing}
\label{sec:routing}

As a concrete application of our results, we focus below on the \acdef{TAP}, a key problem in transportation and network science that concerns the optimal selection of paths between origins and destinations in traffic networks.
Referring to \cite{BG92,NRTV07} for a detailed discussion, the main ingredients of the problem are as follows:
First, let $\graph = (\vertices,\edges)$ be a directed multi-graph with vertex set $\vertices$ and edge set $\edges$.
Assume further that there is a finite set of \acdef{OD} pairs indexed by $\pair\in\pairs$, each with an individual \emph{traffic demand} $\rate^{\pair}\geq 0$ that is to be routed from the pair's \emph{origin node} $\source^{\pair}\in\vertices$ to its \emph{destination} $\sink^{\pair}\in\vertices$.
To route this traffic, the $\pair$-th \ac{OD} pair employs a set $\routes^{\pair}$ of \emph{paths} joining $\source^{\pair}$ to $\sink^{\pair}$, with each path $\route\in\routes^{\pair}$ comprising a sequence of edges that meet head-to-tail in the usual way.%
\footnote{Specifically, we do not assume that $\routes^{\pair}$ is necessarily the set of \emph{all} paths joining $\source^{\pair}$ to $\sink^{\pair}$, but only some subset thereof.
This distincition is important in packet-switched networks where, typically, only a set of paths with minimal hop count are used for traffic routing.}

Now, writing $\routes \equiv \bigcup_{\pair\in\pairs} \routes^{\pair}$ for the ensemble of all such paths, the set of feasible \emph{routing flows} $\flow = (\flow_{\route})_{\route\in\routes}$ in the network is defined as
\begin{equation}
\label{eq:flows}
\textstyle
\flows
	= \setdef*{\flow\in\R_{+}^{\routes}}{\text{$\sum_{\route\in\routes^{\pair}} \flow_{\route} = \rate^{\pair}$ for all $\pair\in\pairs$}}.
\end{equation}
In turn, a routing flow $\flow\in\flows$ induces a \emph{load} on each edge $\edge\in\edges$ as
\begin{equation}
\label{eq:load}
\load_{\edge}
	= \sum_{\route\ni\edge} \flow_{\route},
\end{equation}
and we write $\load = (\load_{\edge})_{\edge\in\edges}$ for the corresponding \emph{load profile} on the network. 
Given all this, the delay (or latency) experienced by an infinitesimal traffic element traversing edge $\edge$ is determined by a nondecreasing continuous \emph{cost function} $\cost_{\edge}\from[0,\infty)\to[0,\infty)$:
more precisely, if $\load = (\load_{\edge})_{\edge\in\edges}$ is the load profile induced by a feasible routing flow $\flow = (\flow_{\route})_{\route\in\routes}$, the incurred delay on edge $\edge\in\edges$ is $\cost_{\edge}(\load_{\edge})$.
Hence, with a slight abuse of notation, the associated cost of path $\route\in\routes$ will be
\begin{equation}
\label{eq:cost-path}
\cost_{\route}(\flow)
	= \sum_{\edge\in\route} \cost_{\edge}(\load_{\edge}),
\end{equation}
and the aggregate latency in the network will be given by
\begin{equation}
\label{eq:cost-total}
\Cost(\flow)
	= \sum_{\pair\in\pairs} \sum_{\route^{\pair}\in\routes^{\pair}} \cost_{\route^{\pair}}(\flow)
	= \sum_{\route\in\routes} \cost_{\route}(\flow).
\end{equation}
Accordingly, with all this at hand, the goal of the \acl{TAP} is to identify a flow profile that minimizes the aggregate latency in the network, i.e., solve the continuous, nonlinear problem
\begin{equation}
\label{eq:TAP}
\tag{TAP}
\begin{aligned}
\textrm{minimize}
	&\quad
	\Cost(\flow)
	\\
\textrm{subject to}
	&\quad
	\flow\in\flows.
\end{aligned}
\end{equation}

Since \eqref{eq:TAP} is a linearly constrained problem, the proposed \ac{HBA} algorithm can be applied essentially ``off the shelf''.
To do so, we consider an experimental setup consisting of a Barabasi-Albert random graph with $\abs{\vertices} = 50$ nodes and $\nPairs$ \acl{OD} pairs chosen uniformly at random from the generated graph.
Subsequently, we used a variant of Dijkstra's algorithm to pick out $\abs{\routes^{\pair}} = 20$ minimal hop count paths per \ac{OD} pair, and we drew the corresponding traffic demands $\rate^{\pair}$, $\pair\in\pairs$, uniformly at random from $[0,1]$.
Concretely, in our experiments, we took $\nPairs=100$ and $\nPairs=500$, implying in turn that the dimensionality $n = \sum_{\pair\in\pairs} \abs{\routes^{\pair}}$ of the resulting \acl{TAP} is $n=1000$ or $n=2500$ respectively.
The network's edge cost functions were also drawn randomly following a straightforward linear model of the form $\cost_{\edge}(\load) = a_{\edge} + b_{\edge} \load_{\edge}$, with $a_{\edge}$ and $b_{\edge}$ drawn uniformly at random from $[0,10]$ and $[0,1]$ respectively.


\begin{figure}[tbp]
\includegraphics[width=.49\textwidth]{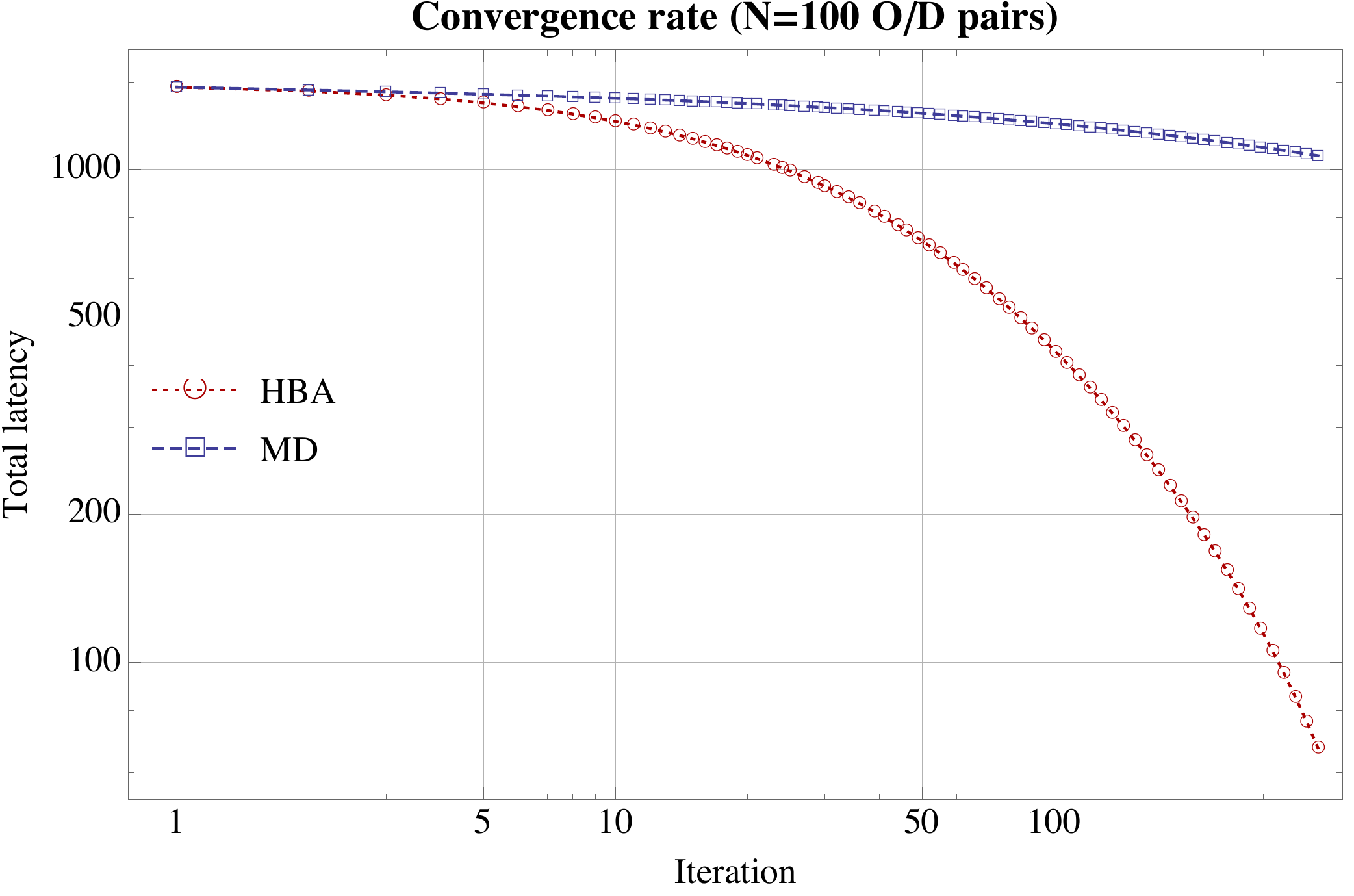}
\hfill
\includegraphics[width=.49\textwidth]{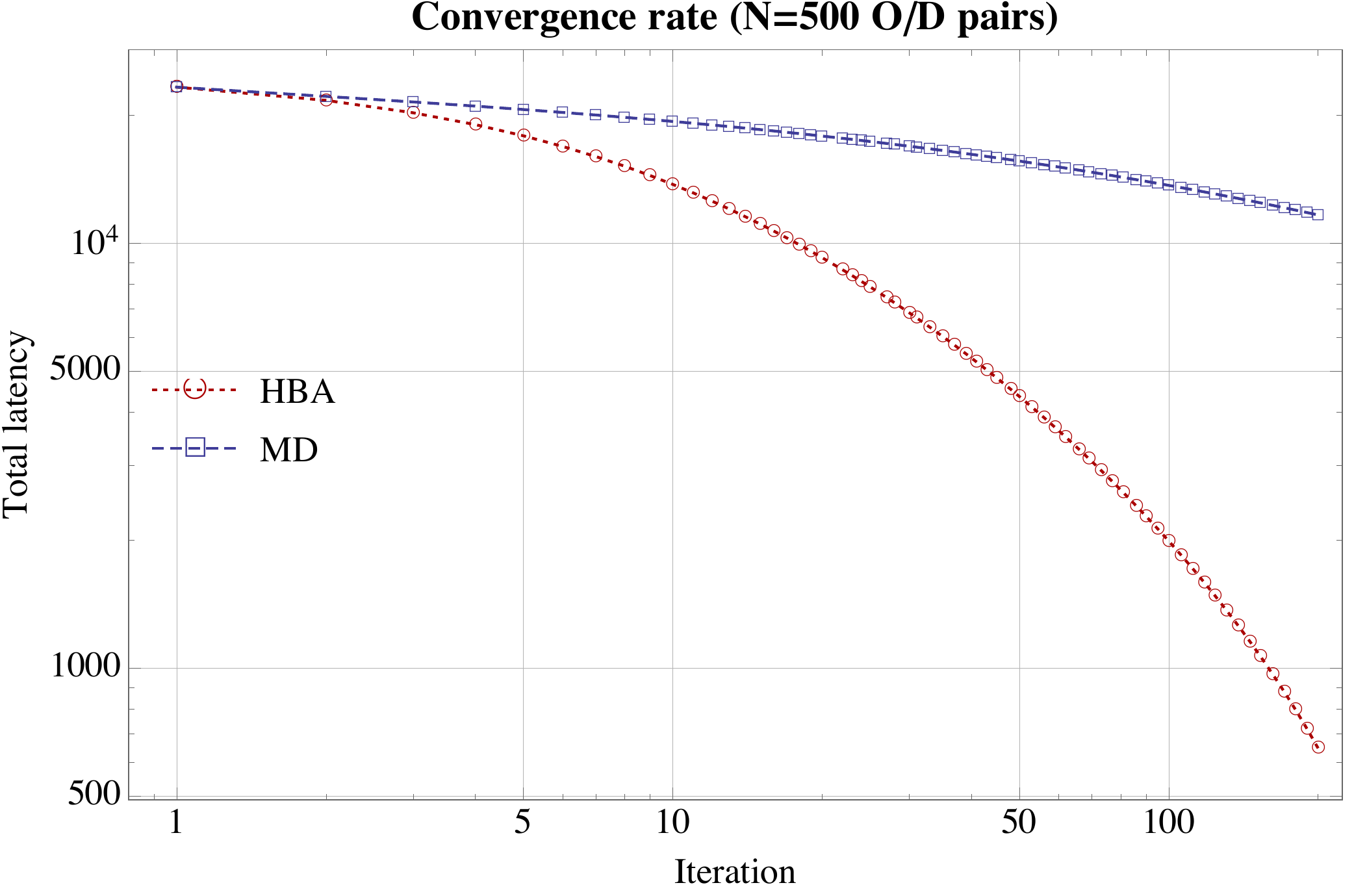}
\caption{Convergence of \cref{alg:HBA} in the \acl{TAP} \eqref{eq:TAP}.
The base network is a randomly drawn Barabasi-Albert graph with $\abs{\vertices} = 50$ nodes and $\nPairs=100$ or $\nPairs=500$ \ac{OD} pairs (left and right respectively).
In both cases, \cref{alg:HBA} exhibits a very fast rate of convergence relative to standard \acl{MD} methods.}
\label{fig:routing}%
\end{figure}


\revise{Our results are shown in \cref{fig:routing}.
In detail, since the problem's feasible region is a high-dimensional simplex (or, rather, a product thereof), we focused on the negative entropy kernel $\theta(\point) = \point\log\point$ which is known to achieve a (nearly) dimension-free convergence rate for \acl{MD} \cite{NY83,BecTeb03}.
Subsequently, we ran both \acl{HBA} and \acl{MD} with the uniform traffic assignment initialization $\flow_{\route}^{\pair} = \rate^{\pair}/\abs{\routes^{\pair}}$, $\route\in\routes^{\pair}$, $\pair\in\nPairs$, which is standard in the traffic assignment literature \cite{BG92,VPM19}.}
In both cases, the \ac{HBA} algorithm exhibits great gains in total latency after no more than a few hundred iterations:
specifically, we observe a total latency reduction of over $95\%$ relative to uniform traffic assignment, and over $90\%$ relative to \acl{MD} after the same number of iterations.
Given the problem's dimensionality of a few thousand control variables, this represents a gain that is particularly encouraging for other applications of the algorithm to large-scale optimization problems.

\section{Conclusion}
\label{sec:conclusion}

In this paper, we presented a class of first-order methods that includes as special cases several widely used numerical schemes for solving (possibly non-convex) smooth optimization problems with linear constraints.
Motivated by the continuous-time \acl{HR} gradient dynamics of \cite{ABB04}, we construct \typo{a} computationally efficient algorithm which avoids the need for a prox-step.
We call this method the \acdef{HBA}.
We show that \ac{HBA}, accompanied with a line search procedure based on Armijo backtracking, yields convergence to \ac{KKT} points.
In case of quadratic programming,
we also provide a sublinear value convergence rate.
Interestingly, the rate depends on the employed metric, highlighting its importance as a design choice.

There are several interesting and challenging open questions left for future research.
A first step concerns the extension of \ac{HBA} methods to non-smooth problems:
in particular, the key driver in proving global convergence is the lower bound on the algorithm's step-size sequence.
From the proof of \cref{lem:lower}, it is clear that we can actually weaken the smoothness assumption made on the objective function significantly in that regard.
We therefore conjecture that it is possible to extend our arguments to problems in which the objective function $\obj$ is not smooth, which would allow us to apply \eqref{eq:HBA} to important applications in statistics and signal processing \cite{HaeLiuYe18}.

\revise{To better assess the method's total oracle complexity, it is important to make a distinction between gradient and function evaluations.
With regard to the former, a key extension of our work would be to an accelerated version of \eqref{eq:HBA}:
recently, \cite{GhaLan16} introduced a gradient method for non-convex optimization problems,
raising the question whether this method can be combined with \acl{HR} gradient steps.
On the other hand, to estimate the number of function evaluations per iteration / gradient call, one would need to establish a bound on the number of Armijo backtracking steps per iteration.
Given the highly nonlinear dependence of the bootstrap step-size $\step_{0}(\point)$ on the problem's primitives, this question seems to be a fairly challenging technical exercise which we leave for future work.}

Finally, we should mention that we have presented \eqref{eq:HBA} as a generic template for first-order methods:
the search direction $v(x)$ can be changed to other data structures, such as a statistical estimator for the gradient, or the profile of individual gradients in a game-theoretic problem.
This opens the door to analyze \eqref{eq:HBA} in the context of stochastic optimization and/or variational inequalities.
This would provide a unifying framework for the recent results of \cite{HaeLiuYe18,LiLiuYaoYe17};
we delegate this technically challenging question to future work.

\section*{Acknowledgments}
%
%
\typo{The authors are indebted to the associate editor and two anonymous referees for their detailed suggestions and remarks.
Mathias Staudigl would also like to thank the University of Vienna for its hospitality while finishing this paper.}
%
%
Panayotis Mertikopoulos was partially supported by the French National Research Agency (ANR) project ORACLESS (ANR-16-CE33-0004-01).
Mathias Staudigl and Panayotis Mertikopoulos were partially supported by the COST Action CA16228 ``European Network for Game Theory'' (GAMENET).




\bibliographystyle{siam}
\bibliography{../bibtex/IEEEabrv,../bibtex/Bibliography-SMD}
\end{document}